\newcommand{\N}{\mathbb N}
\newcommand{\Z}{\mathbb Z}
\newcommand{\R}{\mathbb R}
\newcommand{\T}{\mathbb{T}}
\def\R{\mathbb R}
\def\N{\mathbb N}
\def\Z{\mathbb Z}
\def\P{\mathbb P}
\def\ep{\epsilon}
\def\rg{\rangle} 
\def\lg{\langle}
\newcommand{\be}{\begin{equation}}
\newcommand{\ee}{\end{equation}}
\def\1{{\bf 1}}
\def\inte{\int_{Q_1}}
\def\dive{{\rm div}}
\def\ds{\displaystyle}
\newtheorem{Theorem}{Theorem}[section]
\newtheorem{Definition}[Theorem]{Definition}
\newtheorem{Proposition}[Theorem]{Proposition}
\newtheorem{Lemma}[Theorem]{Lemma}
\newtheorem{Remark}[Theorem]{Remark}
\newtheorem{Remarks}[Theorem]{Remarks}
\begin{document}

\title{Weak solutions for first order mean field games with local coupling}
\author{Pierre Cardaliaguet}
\address{Ceremade, Universit\'e Paris-Dauphine,
Place du Maréchal de Lattre de Tassigny, 75775 Paris cedex 16 - France}
\email{cardaliaguet@ceremade.dauphine.fr }

\dedicatory{Version: \today}

\maketitle

\begin{abstract} Existence and uniqueness of a weak solution for first order mean field game systems with local coupling are obtained by variational methods. This solution can be used to devise $\epsilon-$Nash equilibria for deterministic  differential games with a finite (but large) number of players. For smooth data, the first component of the weak solution of the MFG system is proved to satisfy (in a viscosity sense) a time-space degenerate elliptic differential equation. 
\end{abstract}

\section*{Introduction}

This paper is devoted to the analysis of  first order  mean field game systems with a local coupling. The general form of these systems is: 
\be\label{MFG}
\left\{\begin{array}{cl}
(i)&- \partial_t \phi +H(x,D\phi) =f(x,m(x,t))\\
(ii) & \partial_t m-{\rm div} (mD_pH(x,D\phi))=0\\
(iii)& m(0)=m_0, \; \phi(x,T)=\phi_T(x)
\end{array}\right.
\ee
where the Hamiltonian $H:\R^d\times \R^d\to\R$ is convex in the second variable, the coupling
$f:\R^d\times [0,+\infty)\to [0,+\infty)$ is increasing with respect to the second variable, $m_0$ is a probability density and $\phi_T:\R^d\to\R$ is a given function.  
 In \eqref{MFG}, the scalar unknowns  $(\phi,m)$ are defined on $[0,T]\times \R^d$ and $f$ is a coupling between the two equations. These systems are used to formalize deterministic differential games with an infinite number of players.  The function $\phi$ can be understood as the value function---for a typical and small player who controls his velocity---of  a finite horizon optimal control problem  in which the density $m$ of the other players enters as a datum through the coupling $f$. For this optimal control problem, the optimal feedback of this small player is then (formally) given by the vector field $-D_pH(x,D\phi(t,x))$. When all players play according to this rule, their distribution density $m=m(t,x)$ evolves in time by the continuity equation \eqref{MFG}-(ii). Note that the HJ equation is backward in time (with a terminal condition), while the continuity equation is forward in time (with an initial condition). 

Mean field game systems have been introduced simultaneously by Lasry and Lions  \cite{LL06cr1, LL06cr2, LL07mf} and by Huang, Caines and Malham\'e \cite{HCMieeeAC06}. For second order MFG systems (i.e., systems containing a nondegenerate diffusion) or for first order MFG systems in which the coupling $f$ is of nonlocal nature and regularizing, structure conditions ensuring existence and uniqueness of solutions are well-understood (see, in particular, the discussions in \cite{LL06cr2, LL07mf}). For first order systems in which the coupling is local---as above---the picture is not so clear. Under specific structure conditions, one can expect to have smooth solutions  \cite{LLperso}: the idea is to transform the system into a quasilinear elliptic equation in time space for $\phi$. A priori estimates are then obtained by Bernstein method. The approach however requires some smoothness on the coefficients $H$ and $f$ and some structure conditions  (typically $f(x,m)=\log(m)$) ensuring that the measure $m$ does not vanish. 

Here we work in a different setting: we require only mild regularity on the coefficients and  the measure $m$ can vanish. Under suitable coercivity conditions on $H$ and $f$, we show that system \eqref{MFG} has a unique weak solution (Theorem \ref{theo:mainex}), which is moreover stable with respect to the data (Proposition \ref{Prop:stabilo}). For simplicity we work with space periodic boundary conditions (i.e., in the torus $\T^d=\R^d/\Z^d$). Our result requires that $H=H(x,p)$ is strictly convex and has a super-linear growth with respect to $p$, while $f=f(x,m)$ is strictly increasing in $m$ with a growth depending on the growth of $H$.  Moreover we impose $f$ to be continuous at $m=0$, which prevents our result to apply to couplings of the form $f(x,m)=\ln(m)$. By a weak solution $(m,\phi)$ of \eqref{MFG}, we roughly mean that  $\phi$ is  continuous while $m$ is integrable, that  \eqref{MFG}-(i)  holds a.e. while \eqref{MFG}-(ii) is to be understood in the sense of distribution (see Definition \ref{def:weaksolMFG}). 

Our starting point is the idea---introduced by Lasry and Lions in \cite{LL07mf}---that the MFG system can be understood as an optimality condition for two problems in duality. The first one is an optimal control problem for a Hamilton-Jacobi  equation: one controls the state variable  $\phi$ by a distributed control $\alpha:(0,T)\times \T^d\to \R$ in order to minimize the criterium
$$
 \int_0^T\int_{\T^d} F^*\left(x,\alpha(t,x) \right)\ dxdt - \int_{\T^d} \phi(0,x)dm_0(x).
$$
The state $\phi$ is driven by the backward HJ equation
$$
\left\{ \begin{array}{l}
 -\partial_t\phi(t,x)+H(x,D\phi(t,x))= \alpha(t,x)\qquad {\rm in}\; (0,T)\times \T^d\\
 \phi(T,x)=\phi_T(x)\qquad {\rm in}\;  \T^d
\end{array}\right.
$$
In the above problems, $F^*$ denotes the Fenchel conjugate of the primitive $F$ of $f=f(x,m)$ with respect to the variable $m$. 
The second  control problem is an optimal control problem for a continuity equation: one now controls the state variable $m$ through a vector field $v:(0,T)\times \T^d\to \R^d$ in order to minimize the quantity 
$$
 \int_0^T\int_{\T^d} m(t,x) H^*\left(x, -v(t,x)\right)+ F(x,m(t,x)) \ dxdt + \int_{\T^d} \phi_T(x)m(T,x)dx,
$$
where $m$ solves the continuity equation
$$
\partial_t m+{\rm div} (mv)=0\; {\rm in}\; (0,T)\times \T^d, \qquad m(0)=m_0. 
 $$
In the above expression, $H^*$ is the Fenchel conjugate of the Hamiltonian $H=H(x,\xi)$ with respect to the second variable $\xi$,  $F$ still being the primitive of $f=f(x,m)$ with respect to the variable $m$.

Our first contributions consist in giving a meaning to the above optimal control problems, in showing that they have a solution (at least when properly relaxed) and in proving that, if $\alpha$ is optimal in the first problem, with associate state $\phi$, and if $v$ is optimal in the second problem, with associate state $m$, then the pair $(m,\phi)$ is the unique weak solution of the MFG system \eqref{MFG}. 

This approach  is reminiscent of several works on optimal transport problems, which also involve a coupling between a HJ equation and a continuity equation (and in particular the so-called Benamou-Brenier approach \cite{bb}). However, in most papers on the subject, the HJ equation does not depend on $m$. This is also the case of a model in geometric optics analyzed by Gosse and James \cite{JG} and by Ben Moussa and Kossioris \cite{BMK}, where, furthermore, the HJ equation is forward in time: the analysis is then completely different and the measure $m$ cannot be expected to remain a density in general. 

In fact part of our analysis is close to the one developed in Cardaliaguet, Carlier and Nazaret  \cite{CCN} for a particular optimal transport problem: in particular the notion of weak solution is similar to the one introduced in \cite{CCN}. However, some points of the analysis for the MFG system differ substantially from \cite{CCN}: first the framework, if more regular, is also much more general (with space dependence for the Hamiltonian): a substential amount of the work consists in overcoming this issue. Second,  \cite{CCN} strongly benefited from the $L^\infty-$estimate on $m$ obtained for optimal transport problems by Carrillo, Lisini, Savar\'e and D. Slepcev \cite{clss}. Here we are not aware of such bound in general. On another hand, estimates for solutions of HJ equations with unbounded right-hand side,  obtained in Cardaliaguet and Silvestre \cite{CS}, provide Hölder bounds on  $\phi$: this allows to overcome the difficulty of unbounded $m$. Finally, the uniqueness arguments for $\phi$ differ from that of \cite{CCN}, where the optimal control problem of the HJ equation was more ``strictly convex". 

Our second contribution is to apply the above MFG system to differential games with a finite number of players: we show that  the ``optimal strategies" in the Hamilton-Jacobi part of \eqref{MFG} can be used to derive approximate Nash equilibria for differential games with finitely many players. The idea is that, when the number of players is large, it is useless to look at the other player's behavior: one just needs to use the open-loop strategy given by the mean field game system. This idea is reminiscent of several results obtained for second order MFG systems with nonlocal coupling \cite{CD2012}, \cite{HCMieeeAC06}. Compared to these works we face here two difficulties: the first one is the lack of regularity of our weak solutions of the MFG system \eqref{MFG}. The second is that the  local nature of our coupling: this obliges us to be very careful  in defining the payoff of the differential game with finitely many  players. 

We complete the paper by the remark that, if  $(m,\phi)$ is the weak solution of the MFG system \eqref{MFG}, then $\phi$ is a viscosity solution of a second order elliptic equation in time and space. We   thus recover a result explained in \cite{LLperso} in a more regular framework. The difference with   \cite{LLperso} is that we have here to carefully handle the points  where $m$ vanishes: our equation becomes a variational inequality instead of a pure quasi-linear elliptic equation as in  \cite{LLperso}. 

The paper is organized as follows: we first introduce the notations and assumptions used all along the paper (section \ref{sec:notass}). Then we introduce the two optimal control problems, one of HJ equation, the other of continuity equation; we prove that these two problems are in duality (section \ref{sec:2opti}). The main issue is to show that the first problem has a solution: this is the aim of section \ref{sec:optiHJ}. Then we are ready to define the notion of weak solution for \eqref{MFG} and to show that the system has a unique solution (section \ref{sec:exuniq}). We complete the paper by showing how to use the solution of the mean field game system to finitely many player differential games (section \ref{sec:jeufini}). Following \cite{LLperso}, we also show that, if $(m,\phi)$ is the solution to \eqref{MFG}, then the map $\phi$ is also a viscosity solution of a second order equation in time space (section \ref{sec:visco}).   \\

{\bf Acknowledgement: }  
This work has been partially supported by the Commission of the
European Communities under the 7-th Framework Programme Marie
Curie Initial Training Networks   Project SADCO,
FP7-PEOPLE-2010-ITN, No 264735, and by the French National Research Agency
 ANR-10-BLAN 0112 and ANR-12-BS01-0008-01.

\section{Notations and assumptions}\label{sec:notass}

\noindent {\bf Notations :} We denote by 
 $\lg x, y\rg$ the Euclidean scalar product  of two vectors $x,y\in\R^d$ and by
$|x|$ the Euclidean norm of $x$. For any $x_0\in\R^d$ and $r>0$, we denote by $B_r(x_0)$ the open ball of radius $r$, centered at $x_0\in\R^d$, and we set $B_r=B_r(0)$. We use a similar notation $B_r(t_0,x_0)$ for a ball  of $\R^{d+1}$ centered at $(t_0,x_0)\in \R\times \R^d$ and of radius $r>0$.  

We work in the flat $d-$dimensional torus $\T^d=\R^d\backslash \Z^d$. We denote by $P(\T^d)$ the set of Borel probability measures over $\T^d$. It is endowed with the weak convergence. For $k,n\in\N$ and $T>0$, we denote by ${\mathcal C}^k([0,T]\times \T^d, \R^n)$ the space of maps $\phi=\phi(t,x)$ of class ${\mathcal C}^k$ in time and space with values in $\R^n$. For $p\in [1,\infty]$ and $T>0$, we denote by $L^p(\T^d)$ and $L^p((0,T)\times \T^d)$ the set of $p-$integrable maps over $\T^d$ and $[0,T]\times \T^d$ respectively. We often abbreviate $L^p(\T^d)$ and $L^p((0,T)\times \T^d)$ into  $L^p$. We denote by $\|f\|_p$ the $L^p-$norm of a map $f\in L^p$. 

If $\mu$ is a vector measure over $\T^d$ or $[0,T]\times \T^d$, we denote by $\mu^{ac}$ and $\mu^s$ the decomposition of $\mu$ in absolutely continuous part and singular part with respect to the Lebesgue measure. Recall that $\mu=\mu^{ac}+\mu^s$. For simplicity, if $\phi\in BV$ over $[0,T]\times \T^d$,  we abbreviate the notation $(\partial_t\phi)^{ac}$ and $(\partial_t\phi)^{s}$ into  $\partial_t\phi^{ac}$ and  $\partial_t\phi^{s}$ respectively. \\

%
%
%
%
%
%
%
%
%

\noindent {\bf Assumptions:} We now collect the assumptions on the coupling $f$, the Hamiltonian $H$ and the initial and terminal conditions $m_0$ and $\phi_T$. These conditions are supposed to hold throughout the paper. 
\begin{itemize}
\item[(H1)] (Condition on the coupling) the coupling $f:\T^d\times [0,+\infty)\to \R$ is continuous in both variables, increasing with respect to the second variable $m$, 
and there exist $q>1$  and $\Cl[C]{C0f}$  such that 
\be\label{Hypf}
 \frac{1}{\Cr{C0f}}|m|^{q-1}-\Cr{C0f}\leq f(x,m) \leq \Cr{C0f} |m|^{q-1}+\Cr{C0f} \qquad \forall m\geq 0 \;.
\ee
Moreover we ask the following normalization condition to hold:
\be\label{Hypf(0,m)=0}
f(x,0)=0 \qquad \forall x\in \T^d\;.
\ee

\item[(H2)] (Conditions on the Hamiltonian) The Hamiltonian  $H:\T^d\times \R^d\to\R$ is continuous in both variables, convex and differentiable in the second variable, with $D_pH$ continuous in both variable, and has a superlinear growth in the gradient variable: there exist $r>0$ and  $\Cl[C]{C0H}>0$ such that
\be\label{condr}
r> d(q-1)\wedge 1
\ee
and
\be\label{HypGrowthH}
\frac{1}{r\Cr{C0H}} |\xi|^{r} -\Cr{C0H}\leq H(x,\xi) \leq \frac{\Cr{C0H}}{r}|\xi|^r+\Cr{C0H}\qquad \forall (x,\xi)\in \T^d\times \R^d\;.
\ee
We note for later use that the Fenchel conjugate $H^*$ of $H$ with respect to the second variable is continuous and satisfies similar inequalities
\be\label{HypHstar}
\frac{1}{r'\Cr{C0H}} |\xi|^{r'} -\Cr{C0H}\leq H^*(x,\xi) \leq \frac{\Cr{C0H}}{r'}|\xi|^{r'}+\Cr{C0H}\qquad \forall (x,\xi)\in \T^d\times \R^d\;,
\ee
where $r'$ is the conjugate of $r$: $\ds\frac{1}{r}+\frac{1}{r'}=1$. 

\item[(H3)] (Dependence of $H$ with respect to $x$) We also assume that there is $\theta\in [0,\frac{r}{d+1})$ and a constant $\Cl[C]{Regu}>0$ such that 
\be\label{HypHx}
|H(x,\xi)-H(y,\xi)|\leq \Cr{Regu} |x-y|  \left(|\xi|\vee 1\right)^\theta\qquad \forall x,y\in \T^d,\ \xi \in \R^d
\ee

\item[(H4)] (Conditions on the initial and terminal conditions) $\phi_T:\T^d\to \R$ is of class ${\mathcal C}^1$, while $m_0:\T^d\to \R$ is a continuous, with $m_0\geq 0$ and $\ds \int_{\T^d} m_0dx=1$. 
\end{itemize} 


We now comment upon these assumptions. 

Condition (H1), imposing  $f$ to be nondecreasing with respect to the second variable, is natural in the context of mean field game systems. Indeed this assumption is almost necessary for the well-posedness of \eqref{MFG} (see the discussion in \cite{LL07mf}). The growth condition \eqref{Hypf}, on another hand, is less standard: the variational method used in the paper requires it, but it is clearly not necessary for the existence of a solution to \eqref{MFG}. In particular, the lower bound of \eqref{Hypf} prevents a coupling of the form $f(x,m)=\ln(m)$,  which is typically the case in which one expects to have smooth solutions (see the discussion in \cite{LLperso}). 
 
Condition \eqref{Hypf(0,m)=0} is just a normalization condition, which we assume to fix the ideas. This is mostly without loss of generality.  
Indeed, if $f(x,0)$ is Lipschitz continuous and if all the condition (H1)$\dots$(H4) but \eqref{Hypf(0,m)=0} hold, then one just needs to replace $f(x,m)$ by $f(x,m)-f(x,0)$ and $H(x,p)$ by $H(x,p)-f(x,0)$: the new $H$ and $f$ still satisfy the above conditions (H1)$\dots$(H4) with
\eqref{Hypf(0,m)=0}. 

Let us set 
$$\ds F(x,m)=
\left\{\begin{array}{ll}
\ds \int_0^m f(x,\tau)d\tau & {\rm if }\; m\geq 0\\
+\infty & {\rm otherwise}
\end{array}\right. 
$$
Then is convex in $m$. It is continuous on $\T^d\times (0,+\infty)$, derivable and strictly convex in $m$  and satisfies 
\be\label{HypGrowthF}
 \frac{1}{q\Cr{C0f}}|m|^{q}-\Cr{C0f}\leq F(x,m) \leq  \frac{\Cr{C0f}}{q}|m|^{q}+\Cr{C0f} \qquad \forall m\geq 0
\ee
(changing the constant $\Cr{C0f}$ if necessary). 
Let $F^*$ be the convex conjugate of $F$ with respect to the second variable. Note that $F^*(x,a)=0$ for $a\leq 0$ because $F(x,m)$ is nonnegative and equal to $+\infty$ for $m<0$.  Moreover, 
\be\label{HypGrowthFstar}
\frac{1}{p\Cr{C0f}}|a|^{p}-\Cr{C0f}\leq F^*(x,a) \leq  \frac{\Cr{C0f}}{p}|a|^{p}+\Cr{C0f} \qquad \forall a\geq 0\;,
\ee
where $p$ is the conjugate of $q$: $1/p+1/q=1$. 

Assuming, as in (H2),  that $H$ has a superlinear growth is rather natural for HJ equations: this condition is known to ensure ``Lipschitz bounds" on the solutions of the associated HJ equation.  However, as the right-hand side of \eqref{MFG}-(i) is time dependent, this Lipschitz bound is lost in general and has to be replaced by Hölder estimates, at least when the right-hand side of \eqref{MFG}-(i) is bounded (cf. \cite{CC}, \cite{CaRa}). In our context we face the additional difficulty that, instead of $L^\infty$ bound on the right-hand side of \eqref{MFG}-(i), we only know that it is bounded in $L^p$. The condition that the growth rate $r$ is larger than $d(q-1)$ is precisely used to handle this issue: indeed it allows  to prove that solutions of \eqref{MFG}-(i) are bounded even when the right-hand side is in $L^p$. Moreover, this assumption plays a key role to guaranty the Hölder regularity of solutions of HJ with such unbounded right-hand side: this has been established---in a much broader context---in \cite{CS}, and we recall the estimate in Lemma \ref{CShold} below. 

Finally, some comment upon assumption (H3) are in order. This technical assumption induces a strong restriction upon the dependence of the leading term of $H$ (i.e., the term of order $|p|^r$) with respect to $x$. For instance, if $\ds H(x,\xi)= |\xi|^r -\ell(x)$, assumption (H3) reduces to $\ell$ Lipschitz continuous (since we can take $\theta=0$). On another hand, the condition excludes Hamiltonians of the form $H(x,\xi)= a(x)|\xi|^r$, with $\frac{1}{\Cr{Regu}} \leq a(x)\leq \Cr{Regu}$, because here $\theta= r\notin [0, \frac{r}{d+1})$.  We have to require \eqref{HypHx} to show that the regularization by (classical) convolution of subsolution of \eqref{MFG}-(i) is still a subsolution with a controlled error (see the proof of Proposition \ref{prop:valeursegales}): it is not clear that this computation is optimal.


Throughout the paper, we will have to deal with Hamilton-Jacobi equations in which the right-hand side is unbounded. To handle the solutions of these equations, the following result will be useful: 

\begin{Lemma} [\cite{CS}, Theorem 1.3] \label{CShold} Let $H$ satisfies (H2),  $p>1+ d/r$ and $\alpha\in C^0((0,T)\times \T^d)$. Then, any 
continuous  viscosity solution $u$  of 
\be\label{eqHJ}
- \partial_t \phi +H(x,D\phi) =\alpha(t,x) \qquad {\rm in }\; (0,T)\times \T^d
\ee
is of class $C^\beta$ in any compact subset $K$ of $[0,T)\times \T^d$, where $\beta$ ($\in (0,1)$) and the $C^\beta$ norm depend on the compact $K$, on $\|u\|_\infty$, on $p$, $d$, $r$ and $\Cr{C0H}$, and on $\|(\alpha)_-\|_\infty$ and $\|\alpha\|_p$. 
\end{Lemma}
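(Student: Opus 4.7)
The plan is to rely on the optimal-control (Lax--Oleinik) representation of viscosity solutions of \eqref{eqHJ}. Because $H(x, \cdot)$ is convex with superlinear growth, any continuous viscosity solution $u$ satisfies the one-sided dynamic programming inequality
\begin{equation*}
u(t_1, x_1) \; \leq \; \int_{t_1}^{t_2} \bigl[ H^*(\gamma(\tau), -\dot\gamma(\tau)) + \alpha(\tau, \gamma(\tau)) \bigr]\, d\tau + u(t_2, \gamma(t_2))
\end{equation*}
for any $t_1 < t_2 \leq T$ and any absolutely continuous curve $\gamma : [t_1, t_2] \to \T^d$ with $\gamma(t_1) = x_1$. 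I would use this inequality to bound the oscillation of $u$ between two points of a given compact $K \Subset [0,T) \times \T^d$.

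If one takes $\gamma$ to be the single affine curve joining $x_1$ at $t_1$ to $x_2$ at $t_2$, the $H^*$--cost is bounded via \eqref{HypHstar} by $C\,|x_2 - x_1|^{r'} (t_2 - t_1)^{1-r'} + C(t_2 - t_1)$, but the $\alpha$--integral is not pointwise controllable, since $\alpha$ is only in $L^p$. Following \cite{CS}, I would replace this single curve by a family parameterized by $\xi \in B_\rho \subset \R^d$ for a scale $\rho > 0$ to be chosen: let $\gamma_\xi$ be the affine curve from $x_1$ at time $t_1$ to $x_2 + \xi$ at time $t_2$. Averaging the DPP inequality over $\xi$ and using the change of variables $\xi \mapsto \gamma_\xi(\tau)$ (whose Jacobian is of order $((\tau - t_1)/(t_2 - t_1))^d$), Fubini and H{\"o}lder yield the tube estimate
\begin{equation*}
\frac{1}{|B_\rho|} \int_{B_\rho} \int_{t_1}^{t_2} |\alpha(\tau, \gamma_\xi(\tau))|\, d\tau \, d\xi \; \leq \; C \, \rho^{-d/p} (t_2-t_1)^{1 - (d+1)/p} \|\alpha\|_p,
\end{equation*}
so that, combined with the $H^*$ bound,
\begin{equation*}
\begin{split}
u(t_1, x_1) - \frac{1}{|B_\rho|} \int_{B_\rho} u(t_2, x_2 + \xi)\, d\xi \; \leq \; & \frac{C \bigl(|x_2-x_1|^{r'} + \rho^{r'}\bigr)}{(t_2-t_1)^{r'-1}}  \\
& + C \rho^{-d/p} (t_2-t_1)^{1-(d+1)/p} \|\alpha\|_p + C(t_2 - t_1).
\end{split}
\end{equation*}

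Next I would turn this one-sided estimate into a H{\"o}lder bound. Choosing $\rho$ proportional to a suitable power of $(t_2 - t_1)$ to balance the two leading right-hand terms, one obtains an oscillation-decay inequality of the form $\mathrm{osc}_{Q_\lambda} u \leq \theta\, \mathrm{osc}_{Q_1} u + C \lambda^\beta$ on nested parabolic cylinders, with $\theta < 1$ and $\beta > 0$, which by a standard Campanato-type iteration upgrades to $u \in C^\beta_{\mathrm{loc}}$. The reverse inequality $u(t_2, x_2) - u(t_1, x_1) \leq \ldots$ is obtained by running the same construction from $(t_2, x_2)$ backward in time, and combining both sides yields the claimed two-sided H{\"o}lder estimate on $K$ with the stated dependence on the data. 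The hypothesis $p > 1 + d/r$ (equivalently $r' > 1 + d/p$) is precisely the sharp condition making this balancing produce $\beta > 0$, and it also ensures that the power of $(t_2-t_1)$ in the tube integral is positive, which is exactly why the estimate degenerates as $t_2 \uparrow T$ and $K$ must stay at positive distance from $\{t = T\}$.

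The main obstacle I foresee is not the formal derivation of the pointwise bound above but the rigorous implementation of the iteration: one must absorb the $\|u\|_\infty$ bound, relate the averaged left-hand side to the genuine oscillation of $u$ on a slightly larger cylinder, and keep precise track of constants through the iteration so that they depend only on the quantities listed in the statement. This delicate bookkeeping, carried out in a far broader setting (parabolic equations with general coercive structure), is the content of \cite[Theorem 1.3]{CS}, which we merely invoke here.
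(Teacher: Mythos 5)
The paper offers no proof of this lemma: it is imported verbatim from \cite[Theorem 1.3]{CS}, with only the remark that the hypothesis $r>2$ there can be relaxed to $r>1$ for first-order equations. Since you also end by invoking \cite{CS}, the real question is whether your sketch of the underlying mechanism is sound. Its skeleton is: the one-sided control representation, averaging of the dynamic programming inequality over a family of curves so that the $\alpha$-term is measured in $L^p$ over a space-time tube, and a Campanato iteration. That is indeed the strategy of \cite{CS}, and it is exactly the device the paper deploys explicitly in Lemma \ref{lem:estiPhiT}.

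However, the specific tube you build does not give the estimate under the stated hypothesis. With affine curves $\gamma_\xi$ all emanating from the single point $(t_1,x_1)$, the change of variables $\xi\mapsto\gamma_\xi(\tau)$ has Jacobian $s^d$ with $s=(\tau-t_1)/(t_2-t_1)$, so H\"older in space yields $\frac{1}{|B_\rho|}\int_{B_\rho}|\alpha(\tau,\gamma_\xi(\tau))|\,d\xi\leq C\rho^{-d/p}s^{-d/p}\|\alpha(\tau,\cdot)\|_p$, and H\"older in time then requires $\int_{t_1}^{t_2}s^{-d(q-1)}d\tau<\infty$ with $q$ the conjugate of $p$, i.e. $d/(p-1)<1$, i.e. $p>d+1$. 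This is far stronger than $p>1+d/r$, and your claimed bound $C\rho^{-d/p}(t_2-t_1)^{1-(d+1)/p}\|\alpha\|_p$ does not follow from the construction as written: the tube is too thin near $\tau=t_1$. The cure, used both in \cite{CS} and in Lemma \ref{lem:estiPhiT}, is to let the tube open \emph{sublinearly}, $\gamma_\xi(\tau)=x_1+\xi(\tau-t_1)^\beta+\cdots$ with $\beta<1$: the $\alpha$-side then needs $d\beta(q-1)<1$ while the $H^*$-cost, via \eqref{HypHstar}, needs $\beta>1/r$ so that $\int(\tau-t_1)^{(\beta-1)r'}d\tau<\infty$; the window $(1/r,\,1/(d(q-1)))$ is nonempty precisely when $r>d(q-1)$, which is $p>1+d/r$. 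This trade-off between the opening exponent $\beta$ and the growth $r$ of $H$ is the actual role of the hypothesis, not the balancing you describe. Finally, your parenthetical ``equivalently $r'>1+d/p$'' is false: $p>1+d/r$ amounts to $r>d/(p-1)$, whereas $r'>1+d/p$ amounts to $p>d(r-1)$.
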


In other words, if $\alpha$ is bounded below and in $L^p$, then we have a control on the Hölder norm of the solution $u$ of \eqref{eqHJ}. Note that in \cite{CS} the result is given for (possibly degenerate) second order parabolic equations under the condition $r>2$: a careful inspection of the proof shows that for first order HJ equations, the assumption $r>1$ suffices for the estimate. 

\section{Two optimization problems}\label{sec:2opti}

 The aim of this section is to introduce two optimization problems and show that these problems are in duality. Throughout this section, the maps $\phi_T$ and $m_0$ are fixed and are assumed to satisfy condition (H4).

\subsection{The optimal control of Hamilton-Jacobi equations}\label{subsecOptiHJ}

To describe the first optimization problem,  let us denote by $\mathcal K_0$ the set of   maps $\phi\in {\mathcal C}^1([0,T]\times \T^d)$ such that $\phi(T,x)= \phi_T(x)$ and define, on $\mathcal K_0$, the functional 
\be\label{DefmathcalA}
{\mathcal A}(\phi)= \int_0^T\int_{\T^d}  F^*\left(x,-\partial_t\phi(t,x)+H(x,D\phi(t,x)) \right)\ dxdt - \int_{\T^d} \phi(0,x)dm_0(x). 
\ee
Our first optimization problem is 
\be\label{PB:dual2}
\inf_{\phi\in \mathcal K_0} \mathcal A(\phi)
\ee
In order to give an interpretation of problem \eqref{PB:dual2}, let us set, for $\phi\in \mathcal K_0$, $$
\alpha(t,x)= -\partial_t\phi(t,x)+H(x,D\phi(t,x)).$$ Then we can see $\alpha$ as a control which, combined with the terminal condition $\phi(T,\cdot)= \phi_T$, determines $\phi$ as a solution of an HJ equation. Minimizing ${\mathcal A}$ can be interpreted as  an optimal control problem for the Hamilton-Jacobi equation
$$
\left\{ \begin{array}{l}
 -\partial_t\phi(t,x)+H(x,D\phi(t,x))= \alpha(t,x)\qquad {\rm in}\; (0,T)\times \T^d\\
 \phi(T,x)=\phi_T(x)\qquad {\rm in}\;  \T^d
\end{array}\right.
$$
for the criterium 
$$
\int_0^T\int_{\T^d} F^*\left(x,\alpha(t,x) \right)\ dxdt - \int_{\T^d} \phi(0,x)dm_0(x). 
$$

\subsection{The optimal control of the continuity equation}

To describe the second optimization problem, let us denote by $\mathcal K_1$ the set of pairs $(m, w)\in L^1((0,T)\times \T^d) \times L^1((0,T)\times \T^d,\R^d)$ such that $m(t,x)\geq 0$ a.e., with $\ds \int_{\T^d}m(t,x)dx=1$ for a.e. $t\in (0,T)$, and which satisfy in the sense of distributions the continuity equation
\be\label{conteq}
\partial_t m+{\rm div} (w)=0\; {\rm in}\; (0,T)\times \T^d, \qquad m(0)=m_0. 
\ee
We define on $\mathcal K_1$ the functional
$$
{\mathcal B}(m,w)=  \int_0^T\int_{\T^d} m(t,x) H^*\left(x, -\frac{w(t,x)}{m(t,x)}\right)+ F(x,m(t,x)) \ dxdt + \int_{\T^d} \phi_T(x)m(T,x)dx.
$$
Let us first give a precise meaning to ${\mathcal B}$. If $m(t,x)=0$, then  by convention
$$
m H^*\left(x, -\frac{w}{m}\right)=\left\{\begin{array}{ll}
+\infty & {\rm if }\; w\neq 0\\
0 & {\rm if }\; w=0
\end{array}\right.
$$
As $H^*$ and $F$ are bounded below and $m\geq 0$ a.e., the first integral in ${\mathcal B}(m,w)$  is well defined in  $\R\cup\{+\infty\}$. The term $\ \int_{\T^d} \phi_T(x)m(T,x)dx\ $ has to be interpreted as follows: let us set $\ \ds v(t,x)= -\frac{w(t,x)}{m(t,x)}\ $ if $m(t,x)>0$ and $\ v(t,x)=0\ $ otherwise. Because of the growth of $H^*$ (thanks to \eqref{HypHstar}, which is a consequence of (H2)), ${\mathcal B}(m,w)$ is infinite if $v\notin L^{r'}(m\ dxdt)$. So we can assume without loss of generality that  $v\in L^{r'}(m\ dxdt)$. In this case equation \eqref{conteq} can be rewritten as the continuity equation 
\be\label{conteq12}
\partial_t m+{\rm div} (mv)=0\; {\rm in}\; (0,T)\times \T^d, \qquad m(0)=m_0. 
\ee
As $v\in L^{r'}(m\ dxdt)$, it is well-known that $m$ can be identified with a continuous map from $[0,T]$ to $P(\T^d)$ (see, e.g., \cite{AGS}). In particular, the measure $m(t)$ is defined for any $t$, which gives a meaning to the second integral term in the definition of ${\mathcal B}(m,w)$. 

The second optimal control problem is the following: 
\be\label{Pb:mw2}
\inf_{(m,w)\in \mathcal K_1} \mathcal B(m,w)\;.
\ee
The introduction of $v$ gives a natural interpretation of \eqref{Pb:mw2}: indeed one can see the vector field $v$ as a control over the state $m$ through the continuity equation \eqref{conteq12}. In this case the optimization of ${\mathcal B}$ can be viewed as an optimal control of \eqref{conteq12}. 

\subsection{The two problems are in duality}

\begin{Lemma}\label{Lem:dualite} We have
$$
\inf_{\phi\in \mathcal K_0}{\mathcal A}(\phi) = - \min_{(m,w)\in \mathcal K_1} {\mathcal B}(m,w), 
$$
Moreover, the minimum in the right-hand side is achieved by a unique pair $(m,w)\in \mathcal K_1$ satisfying $(m,w)\in  L^q((0,T)\times \T^d)\times L^{\frac{r'q}{r'+q-1}}((0,T)\times \T^d)$. 
\end{Lemma}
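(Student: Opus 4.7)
The plan is to apply the Fenchel--Rockafellar duality theorem. Work on the Banach spaces $E_0=C^1([0,T]\times\T^d)$ and $E_1=C^0([0,T]\times\T^d;\R\times\R^d)$, with the continuous linear operator $\Lambda\colon E_0\to E_1$ given by $\Lambda\phi=(\partial_t\phi,D\phi)$. Define $\mathcal{G}(\phi)=-\int_{\T^d}\phi(0,x)m_0(x)\,dx$ if $\phi(T,\cdot)=\phi_T$ and $+\infty$ otherwise, and $\mathcal{F}(a,b)=\int_0^T\!\!\int_{\T^d}F^{*}(x,-a+H(x,b))\,dxdt$, so that $\mathcal{A}=\mathcal{F}\circ\Lambda+\mathcal{G}$. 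Both functionals are convex: for $\mathcal{F}$ this uses that $F^{*}(x,\cdot)$ is convex and nondecreasing on $\R$ (it vanishes on $(-\infty,0]$ thanks to \eqref{Hypf(0,m)=0}) and $H(x,\cdot)$ is convex. The polynomial bounds \eqref{HypGrowthH} and \eqref{HypGrowthFstar} imply that $\mathcal{F}$ is locally bounded on $E_1$, hence continuous. Choosing $\phi_0(t,x)=\phi_T(x)\in E_0$ with $\mathcal{G}(\phi_0)<+\infty$ provides the qualification point, so that Fenchel--Rockafellar applies and yields
\begin{equation*}
\inf_{\phi\in E_0}\mathcal{A}(\phi)=\max_{(\mu,\nu)\in E_1^{*}}\bigl\{-\mathcal{F}^{*}(\mu,\nu)-\mathcal{G}^{*}(-\Lambda^{*}(\mu,\nu))\bigr\},
\end{equation*}
the maximum being attained.

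The second step, where I expect the main difficulty to lie, is to identify this abstract dual with $-\inf_{\mathcal{K}_1}\mathcal{B}$. Parametrizing the dual variables as $(\mu,\nu)=-(m,w)$ and integrating by parts (formally), for $\phi\in E_0$ with $\phi(T,\cdot)=\phi_T$,
\begin{equation*}
\langle\Lambda\phi,(m,w)\rangle=\int_{\T^d}\phi_T m(T)\,dx-\int_{\T^d}\phi(0)m(0)\,dx+\int_0^T\!\!\int_{\T^d}\phi\,(\partial_t m+\dive w)\,dxdt,
\end{equation*}
so that $\mathcal{G}^{*}(-\Lambda^{*}(-m,-w))=\int_{\T^d}\phi_T(x) m(T,x)\,dx$ whenever $(m,w)$ satisfies \eqref{conteq} in the sense of distributions (and $+\infty$ otherwise). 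For $\mathcal{F}^{*}(-m,-w)$ the change of variable $\sigma=-a+H(x,b)$ decouples the sup pointwise, giving $mH^{*}(x,-w/m)$ (with the convention $m\geq 0$, $w=0$ on $\{m=0\}$) from the sup over $b$ and $F^{**}(x,m)=F(x,m)$ from the sup over $\sigma$; summing yields $\mathcal{B}(m,w)$ on $\mathcal{K}_1$. The delicate point is that Fenchel--Rockafellar produces dual variables living a priori in the dual of $C^0$, i.e.\ Radon measures, so one must justify that singular parts of $m$, $w$, or of $\partial_t m+\dive w$ force the dual objective to be $+\infty$; this is handled either by computing the conjugate directly on measures (the superlinear growth of $F^{*}$ eliminates singular parts of $m$, and unrestricted test functions $\phi$ eliminate singular transport defects), or by approximating the dual variables by smooth densities.

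Finally, any maximizer $(m,w)$ has $\mathcal{B}(m,w)<+\infty$, so \eqref{HypGrowthF} gives $m\in L^q$, while \eqref{HypHstar} yields $mH^{*}(x,-w/m)\geq c\,|w|^{r'}/m^{r'-1}-Cm$ on $\{m>0\}$, hence $|w|^{r'}/m^{r'-1}\in L^1$. A H\"older interpolation
\begin{equation*}
\int|w|^s\,dxdt\leq\Bigl(\int\frac{|w|^{r'}}{m^{r'-1}}\,dxdt\Bigr)^{s/r'}\Bigl(\int m^{s(r'-1)/(r'-s)}\,dxdt\Bigr)^{(r'-s)/r'}
\end{equation*}
with $s=r'q/(r'+q-1)$ (so that $s(r'-1)/(r'-s)=q$) then gives $w\in L^s$. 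Uniqueness follows from strict convexity: if $(m_1,w_1)$ and $(m_2,w_2)$ were two minimizers, averaging and using the strict convexity of $F(x,\cdot)$ (from (H1)) forces $m_1=m_2$, and then the strict convexity of $H^{*}(x,\cdot)$ (since $H$ is $C^1$ in $p$) on $\{m>0\}$ forces $w_1=w_2$ there, while $w_i=0$ on $\{m=0\}$ by convention.
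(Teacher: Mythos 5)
Your proposal is correct and follows essentially the same route as the paper: Fenchel--Rockafellar on $E_0=C^1$ and $E_1=C^0$ with the same splitting and qualification point, the same pointwise conjugate computation yielding $F(x,m)+mH^*(x,-w/m)$, the same H\"older interpolation for $w\in L^{r'q/(r'+q-1)}$, and the same strict-convexity uniqueness argument. The one point you flag as delicate --- excluding singular parts of the dual measures --- is handled in the paper exactly as you suggest, by computing the recession function of $K^*$ and observing that the superlinear growth of $F$ and $H^*$ forces it to be $+\infty$ off $\{m=0,\,w=0\}$.
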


\begin{Remark}{\rm Note that $\frac{r'q}{r'+q-1}>1$ because $r'>1$ and $q>1$. 
}\end{Remark}

\begin{proof} We use the Fenchel-Rockafellar duality theorem (cf. e.g., \cite{ET}). For this, we rewrite the first optimization problem \eqref{PB:dual2} in a more suitable form. 
Let  $E_0= {\mathcal C}^1([0,T]\times \T^d)$ and  $E_1=   {\mathcal C}^0([0,T]\times \T^d, \R)\times {\mathcal C}^0([0,T]\times \T^d, \R^d)$. We define on $E_0$  the functional 
$$
{\mathcal F}(\phi)= -\int_{\T^d} m_0(x)\phi(0,x)dx +\chi_S(\phi),  
$$
where $\chi_S$ is the characteristic function of the set $S=\{\phi\in E_0, \; \phi(T, \cdot)= \phi_T\}$, i.e., $\chi_S(\phi)=0$ if $\phi\in S$ and $+\infty$ otherwise. For $(a,b)\in E_1$, we set 
$$
{\mathcal G} (a,b)= \int_0^T\int_{\T^d}  F^*(x,-a(t,x)+H(x,b(t,x)) )\ dxdt \;.
$$
Note that ${\mathcal F}$ is convex and lower semi-continuous on $E_0$ while ${\mathcal G}$ is convex  and continuous on $E_1$. Let $\Lambda:E_0\to E_1$ be the bounded linear operator defined by $\Lambda (\phi)= (\partial_t\phi, D\phi)$. Note that 
$$
\inf_{\phi\in \mathcal K_0} \mathcal A(\phi)= \inf_{\phi\in E_0} \left\{ {\mathcal F}(\phi)+{\mathcal G}(\Lambda(\phi))\right\}.
$$
One easily checks that there is a map $\phi$ such that ${\mathcal F}(\phi)<+\infty$ and 
such that ${\mathcal G} $ is continuous at $\Lambda (\phi)$: just take $\phi(t,x)=\phi_T(x)$. 


By the Fenchel-Rockafellar duality theorem we have 
$$
\inf_{\phi\in E_0} \left\{ {\mathcal F}(\phi)+{\mathcal G}(\Lambda(\phi))\right\}
=
\max_{ (m,w)\in E_1'} \left\{ -  {\mathcal F}^*(\Lambda^*(m,w))-{\mathcal G}^*(-(m,w))\right\}
$$
where $E_1'$ is the dual space of $E_1$, i.e., the set of vector valued Radon measures $(m, w)$ over $[0,T]\times \T^d$ with values in $\R\times \R^d$ and ${\mathcal F}^*$ and ${\mathcal G}^*$ are the convex conjugates of ${\mathcal F}$ and ${\mathcal G}$ respectively. By a direct computation we have 
$$
{\mathcal F}^*(\Lambda^*(m,w))
= \left\{ \begin{array}{ll}
\ds \int_{\T^d} \phi_T(x)dm(T,x) & {\rm if }\; \partial_t m+{\rm div} (w)=0, \; m(0)=m_0\\
+\infty & {\rm otherwise}
\end{array}\right.
$$
where the equation $\ \partial_t m+{\rm div} (w)=0, \; m(0)=m_0\ $ holds in the sense of distribution. 
Let us  set 
$$
K(x,a,b)=  F^*(x,-a+H(x,b) )\qquad \forall (x,a,b)\in \T^d\times \R\times \R^d\;.
$$
Then, for any $(m,w)\in \R\times \R^d$, 
$$
\begin{array}{rl}
\ds K^*(x,m,w)\; = & \ds \sup_{(a,b)\in \R\times \R^d} \left\{am+\lg b, w\rg -  F^*(x,-a+H(x,b) ) \right\}\\ 
= & \ds  \sup_{(a,b)\in \R\times \R^d} \left\{H(x,b)m-am+\lg b, w\rg -  F^*(x,a ) \right\}\\ 
= & \ds \sup_{b\in \R^d} \left\{ H(x,b)m + \lg b, w\rg +  F(x,-m ) \right\}
\end{array}
$$
Since $H$ is convex with respect to the second variable and has a superlinear growth, we have therefore 
$$
K^*(x,m,w)= \left\{ \begin{array}{ll}
\ds   F(x,-m ) -m H^*( x, -\frac{w}{m}) & {\rm if }\; m<0 \\ 
 0 & {\rm if }\;  m=0, \ w=0\\
+\infty & {\rm otherwise}
\end{array}\right.
$$
In particular, since, from \eqref{HypHstar} and \eqref{HypGrowthF},  $H^*$ has a superlinear growth and $F$ is coercive, the recession function  $K^{*\infty}$ of $K^*$ satisfies:
$$
K^{*\infty}(x,m,w)= 
\left\{ \begin{array}{ll}
\ds   0 & {\rm if }\;   m=0, \ w=0\\
+\infty & {\rm otherwise}
\end{array}\right.
$$
Therefore ${\mathcal G}^*(m,w)=+\infty$ if $(m,w)\notin L^1$ and, if $(m,w)\in L^1$, 
$$
{\mathcal G}^*(m,w)= \int_0^T\int_{\T^d} K^*(x,m(t,x),w(t,x))dtdx .
$$
Accordingly 
$$
\begin{array}{l}
\ds \max_{ (m,w)\in E_1'} \left\{ -  {\mathcal F}^*(\Lambda^*(m,w))-{\mathcal G}(m,w)\right\}\\
\qquad \qquad \qquad \ds = \max \left\{ \int_0^T\int_{\T^d}  -F(x,m ) -m H^*( x, -\frac{w}{m})\ dtdx  -\int_{\T^d} \phi_T(x) m(T,x)\ dx \right\}
 \end{array}
$$
where the maximum is taken over the $L^1$ maps $(m,w)$ such that $m\geq 0$ a.e. and 
$$
\partial_t m+{\rm div} (w)=0, \; m(0)=m_0.
$$
As $\ds\ \int_{\T^d} m_0=1\ $, we have therefore $\ds\ \int_{\T^d} m(t)=1\ $ for any $t\in [0,T]$. Thus the pair $(m,w)$ belongs to the set $\mathcal K_1$.

Let now $(m, w)\in \mathcal K_1$ be optimal in the above system. From the growth conditions \eqref{HypGrowthH} and \eqref{HypGrowthF}, we have 
$$
\begin{array}{rl}
 C \; \geq & \ds  \int_0^T\int_{\T^d}  F(x,m ) +m H^*( x, -\frac{w}{m})\ dtdx  +\int_{\T^d} \phi_T(x) m(T,x)\ dx \\
  \geq & \ds   \int_0^T\int_{\T^d}  \left(\frac{1}{ C}|m|^{q}+
 \frac{m}{ C } \left|\frac{w}{m}\right|^{r'} - C \right) dxdt -\|\phi_T\|_\infty 
 \end{array}
 $$
In particular, $m\in L^q$.  By Hölder inequality, we also have 
$$
\int_0^T\int_{\T^d} |w|^{\frac{r'q}{r'+q-1}} = \int\int_{\{m>0\}} |w|^{\frac{r'q}{r'+q-1}} \leq \|m\|_q^{\frac{r'-1}{r'+q-1}} \left(\int\int_{\{m>0\}} \frac{|w|^{r'}}{m^{r'-1}} \right)^{\frac{q}{r'+q-1}} \leq C
$$
so that $w\in L^{\frac{r'q}{r'+q-1}}$. Finally, we note that there is a unique minimizer to \eqref{Pb:mw2}, because the set $\mathcal K_1$ is convex and the maps $F(x,\cdot)$ and $H^*(x,\cdot)$  are strictly convex: thus $m$ is unique and so is $\ds \frac{w}{m}$ in $\{m>0\}$. As $w=0$ in $\{m=0\}$, uniqueness of $w$ follows as well. 
\end{proof}

\section{Analysis of the optimal control of the HJ equation}\label{sec:optiHJ}

In general, we do not expect problem \eqref{PB:dual2} to have a solution. In this section we exhibit a relaxation for  \eqref{PB:dual2}  (Proposition \ref{prop:valeursegales}) and show that this relaxed problem has at least one solution (Proposition \ref{Prop:existence}).

\subsection{The relaxed problem}

Let ${\mathcal K}$ be the set of pairs $(\phi,\alpha)\in BV((0,T)\times \T^d)\times L^p((0,T)\times \T^d)$ such that $D\phi\in L^r((0,T)\times \T^d)$ and which satisfies $\phi(T,x)= \phi_T(x)$ (in the sense of traces) and, 
in the sense of distribution,
\be\label{ineq:phi}
-\partial_t \phi+H(x,D\phi) \leq \alpha \qquad {\rm in } \; (0,T)\times \T^d\;.
\ee
Note that ${\mathcal K}$ is a convex set and that the set $\mathcal K_0$ (defined in Subsection \ref{subsecOptiHJ}) can naturally be embeded into $\mathcal K$: indeed, if $\phi\in {\mathcal K}_0$, then the pair $(\phi,-\partial_t\phi+H(x,D\phi))$ belongs to ${\mathcal K}$. We extend to $\mathcal K$ the functional $\mathcal A$ defined on $\mathcal K_0$ by setting (with a slight abuse of notation)
$$
\mathcal A(\phi,\alpha) = \int_0^T\int_{\T^d}  F^*(x,\alpha(x,t))\ dxdt - \int_{\T^d} \phi(x,0)m_0(x)\ dx\qquad \forall (\phi, \alpha)\in {\mathcal K}.
$$
The next Proposition explains that the problem
\be\label{PB:dual-relaxed}
\inf_{(\phi, \alpha)\in {\mathcal K}} \mathcal A(\phi,\alpha)
\ee
is the relaxed problem of \eqref{PB:dual2}. 

\begin{Proposition}\label{prop:valeursegales} We have
$$
 \inf_{\phi\in \mathcal K_0} \mathcal A(\phi)= \inf_{(\phi, \alpha)\in {\mathcal K}} \mathcal A(\phi,\alpha).
 $$
\end{Proposition}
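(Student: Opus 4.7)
The inclusion $\mathcal K_0 \hookrightarrow \mathcal K$ via $\phi \mapsto (\phi,\, -\partial_t \phi + H(x, D\phi))$ immediately gives $\inf_{\mathcal K_0} \mathcal A \geq \inf_{\mathcal K} \mathcal A$. For the converse, the plan is to fix any $(\phi, \alpha) \in \mathcal K$ with $\mathcal A(\phi, \alpha) < +\infty$ and build a sequence $\phi^\epsilon \in \mathcal K_0$ with $\limsup_\epsilon \mathcal A(\phi^\epsilon) \leq \mathcal A(\phi, \alpha)$.

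First I would extend $(\phi, \alpha)$ to the larger interval $[0, T+1]$ by setting $\phi(t, x) := \phi_T(x) - A(t - T)$ and $\alpha(t, x) := A + H(x, D\phi_T(x))$ for $t \in [T, T+1]$, with $A$ chosen large enough that the extended $\alpha$ is bounded. Since the left trace of $\phi$ at $t = T$ equals $\phi_T$ and the extension also equals $\phi_T$ at $t = T$, no Dirac mass appears in $\partial_t \phi$ across $t = T$, so the distributional HJ inequality survives on $[0, T+1]$. I would then convolve in space-time with a standard symmetric mollifier $\rho_\epsilon$, producing smooth $\phi^\epsilon := \phi * \rho_\epsilon$ and $\alpha^\epsilon := \alpha * \rho_\epsilon$. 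Using Jensen's inequality for the convex function $H(x, \cdot)$ together with (H3), one gets pointwise
\[
H(x, D\phi^\epsilon(t, x)) - \bigl(H(\cdot, D\phi) * \rho_\epsilon\bigr)(t, x) \;\leq\; \Cr{Regu}\, \epsilon \cdot \bigl((|D\phi| \vee 1)^\theta * \rho_\epsilon\bigr)(t, x) \;=:\; e^\epsilon(t, x),
\]
and combined with the convolved HJ inequality this yields $-\partial_t \phi^\epsilon + H(x, D\phi^\epsilon) \leq \alpha^\epsilon + e^\epsilon$ everywhere.

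Next, to enforce the terminal condition, I would set $\tilde\phi^\epsilon(t, x) := \phi^\epsilon(t, x) + \chi_\epsilon(t)\bigl(\phi_T(x) - \phi^\epsilon(T, x)\bigr)$, where $\chi_\epsilon$ is a smooth cutoff equal to $1$ at $t = T$ and supported in $[T - \eta_\epsilon, T]$, with $\eta_\epsilon \to 0$ chosen slowly. Then $\tilde\phi^\epsilon(T, \cdot) = \phi_T$, $\tilde\phi^\epsilon(0, \cdot) = \phi^\epsilon(0, \cdot)$, and $\tilde\phi^\epsilon \in \mathcal K_0$ once we set $\tilde\alpha^\epsilon := -\partial_t \tilde\phi^\epsilon + H(x, D\tilde\phi^\epsilon)$. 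A direct expansion shows $\tilde\alpha^\epsilon \leq \alpha^\epsilon + e^\epsilon + R^\epsilon$, with $R^\epsilon$ supported in the thin strip $[T - \eta_\epsilon, T]$ and tending to $0$ in $L^p$ provided $\eta_\epsilon$ is scheduled against the $L^p$-modulus of $\phi^\epsilon(T, \cdot) - \phi_T$. Since $F^*(x, \cdot)$ is nondecreasing and the $p$-growth \eqref{HypGrowthFstar} makes $a \mapsto F^*(\cdot, a)$ continuous from $L^p$ to $L^1$, one then gets $\int F^*(x, \tilde\alpha^\epsilon) \leq \int F^*(x, \alpha^\epsilon + e^\epsilon + R^\epsilon) \to \int F^*(x, \alpha)$, and the linear term handles itself via $\phi^\epsilon(0, \cdot) \to \phi(0, \cdot)$.

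The main obstacle is the $L^p$-convergence $e^\epsilon \to 0$. Writing $e^\epsilon = \Cr{Regu}\, \epsilon \cdot g_\epsilon$ with $g_\epsilon = (|D\phi| \vee 1)^\theta * \rho_\epsilon$, and noting that $D\phi \in L^r$ only gives $(|D\phi| \vee 1)^\theta \in L^{r/\theta}$, one invokes Young's inequality with suitable conjugate exponents; the blow-up $\|\rho_\epsilon\|_b \sim \epsilon^{-(d+1)(1 - 1/b)}$ needs to be compensated by the prefactor $\epsilon$, and this is exactly what the restriction $\theta < r/(d+1)$ in (H3) buys. The remaining technicalities---controlling the BV trace at $T$ so that $\phi^\epsilon(T, \cdot) \to \phi_T$ in $L^p$, and the scheduling of $\eta_\epsilon$ so that $\chi_\epsilon'(t)(\phi_T - \phi^\epsilon(T,x))$ vanishes in $L^p$---are routine once this core error estimate is in hand.
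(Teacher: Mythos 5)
Your overall strategy is the same as the paper's (extend beyond $t=T$, mollify in space--time, control the commutator error via (H3) and H\"older/Young --- that part of your argument is essentially the paper's computation and is fine), but two steps that you dismiss as routine are precisely where the paper has to work, and as written they do not go through.

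First, the terminal patch. You correct the terminal value additively, $\tilde\phi^\ep=\phi^\ep+\chi_\ep(t)\,h$ with $h=\phi_T-\phi^\ep(T,\cdot)$, and claim $\tilde\alpha^\ep\le\alpha^\ep+e^\ep+R^\ep$ with $R^\ep\to0$ in $L^p$. But this perturbs the gradient by $\chi_\ep(t)Dh$, and $Dh=D\phi_T-D\phi^\ep(T,\cdot)$ is only in $L^r(\T^d)$, with a norm that may blow up like $\ep^{-1/r}$ (the time slice of a mollified $L^r$ function is controlled only on average). Since $H$ has $r$-growth, the term $H(x,D\phi^\ep+\chi_\ep Dh)-H(x,D\phi^\ep)$ need not even be finite in $L^p$ on the strip, let alone small; likewise $\chi_\ep'(t)h$ requires a quantitative rate for $\|\phi^\ep(T,\cdot)-\phi_T\|_{L^p}$, whereas the trace of a BV function is attained only in $L^1$ and without a rate, so there is no schedule for $\eta_\ep$ you can point to. The paper circumvents both problems by (a) translating time by $2\ep$ so that near $t=T$ the mollification sees only the explicit ${\mathcal C}^1$ extension $\phi_T(x)+\lambda(T-2\ep-t)$, which yields the uniform bound $\sup_{t\in[T-\ep,T]}\|\phi_\ep(t,\cdot)-\phi_T\|_\infty\le C\ep$, and (b) interpolating by a \emph{convex combination} $(1-\zeta_\ep)\phi_\ep+\zeta_\ep(\phi_T+\lambda(T-t))$ with the explicit classical subsolution; convexity of $H$ then shows the only extra error is the bounded term $\zeta_\ep'\cdot O(\ep)=O(1)$ on a strip of width $\ep$, whose contribution to $\int F^*$ vanishes. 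Your additive patch has no substitute for either ingredient.

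Second, the initial trace. You assert ``$\phi^\ep(0,\cdot)\to\phi(0,\cdot)$'' and that the linear term ``handles itself.'' With a symmetric space--time mollifier and your extension only to $[0,T+1]$, $\phi^\ep(0,\cdot)$ is not even defined; and once it is (after shifting or extending to $t<0$), passing to the limit in $\int\phi^\ep(0)\,dm_0$ against the one-sided trace $\phi(0^+,\cdot)$ of a BV function is exactly where the paper invokes Lemma \ref{lem:estiPhiT}: the one-sided estimate $\phi(t_1^+,\cdot)\le\phi(t_2^-,\cdot)+C(t_2-t_1)^\nu\|\alpha\|_p$ gives $\tilde\phi_\ep(0,\cdot)\ge\xi_\ep\star\phi(0,\cdot)-C\ep^\nu\|\alpha\|_p$ and hence the required $\liminf$. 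That lemma is a substantive ingredient (its proof is where the structural condition $r>d(q-1)$ enters), and your proposal bypasses it without providing an alternative. Until these two points are repaired --- most naturally by adopting the time translation, the sup-norm estimate near $t=T$, and the convex-combination patch --- the proof is incomplete.
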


In order to prove Proposition \ref{prop:valeursegales}, we need a remark which is repeatedly used in the sequel. It says that one can restrict the minimization problem to pairs $(\phi,\alpha)$ for which $\alpha$ is nonnegative.

\begin{Lemma}\label{lem:positif} We have 
$$
\inf_{(\phi, \alpha)\in {\mathcal K}} \mathcal A(\phi,\alpha)= \inf_{(\phi, \alpha)\in {\mathcal K}, \ \alpha\geq0\ {\rm a.e.}} \mathcal A(\phi,\alpha)
$$
\end{Lemma}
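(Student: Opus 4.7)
The plan is to observe that the functional $\mathcal A(\phi,\alpha)$ is insensitive to the negative part of $\alpha$, so one may freely replace $\alpha$ by its positive part without changing the value while improving (i.e., preserving) the constraint.

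More precisely, I would start from an arbitrary pair $(\phi,\alpha)\in\mathcal K$ and set $\tilde\alpha:=\alpha_+=\max\{\alpha,0\}$. Then $\tilde\alpha\in L^p$ because $|\tilde\alpha|\le|\alpha|$, and $\tilde\alpha\ge\alpha$ pointwise, so the subsolution inequality
$$
-\partial_t\phi+H(x,D\phi)\le\alpha\le\tilde\alpha
$$
still holds in the sense of distributions; together with $\phi(T,\cdot)=\phi_T$ and $D\phi\in L^r$, this shows $(\phi,\tilde\alpha)\in\mathcal K$, with the extra property $\tilde\alpha\ge 0$ a.e.

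Next I would check that $\mathcal A(\phi,\tilde\alpha)=\mathcal A(\phi,\alpha)$. Since the term $-\int_{\T^d}\phi(0,x)m_0(x)\,dx$ is unchanged, this reduces to showing $F^*(x,\tilde\alpha(t,x))=F^*(x,\alpha(t,x))$ a.e. On the set $\{\alpha\ge 0\}$ this is immediate since $\tilde\alpha=\alpha$ there. On the set $\{\alpha<0\}$ it suffices to recall the identity $F^*(x,a)=0$ for every $a\le 0$, already noted in the paper: indeed $F(x,\cdot)$ is nonnegative on $[0,\infty)$, equal to $+\infty$ on $(-\infty,0)$, and satisfies $F(x,0)=0$, so $F^*(x,a)=\sup_{m\ge 0}\{am-F(x,m)\}=0$ whenever $a\le 0$ (the supremum being attained at $m=0$). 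Hence $F^*(x,\tilde\alpha)=F^*(x,0)=0=F^*(x,\alpha)$ on $\{\alpha<0\}$.

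Combining these two facts, every $(\phi,\alpha)\in\mathcal K$ admits a companion $(\phi,\tilde\alpha)\in\mathcal K$ with $\tilde\alpha\ge 0$ and the same value of $\mathcal A$, which yields
$$
\inf_{(\phi,\alpha)\in\mathcal K}\mathcal A(\phi,\alpha)\;\ge\;\inf_{(\phi,\alpha)\in\mathcal K,\ \alpha\ge 0}\mathcal A(\phi,\alpha).
$$
The reverse inequality is trivial because the right-hand side is an infimum over a smaller set. There is really no obstacle here: everything rests on the normalization $F(x,0)=0$ from (H1) together with the coercivity of $F$ on $[0,\infty)$, and the closure of $\mathcal K$ under the pointwise increase $\alpha\mapsto\alpha_+$.
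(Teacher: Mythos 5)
Your proposal is correct and follows essentially the same route as the paper: replace $\alpha$ by $\alpha\vee 0$, observe that the pair stays in $\mathcal K$ since the subsolution inequality only improves, and use that $F^*(x,\cdot)$ vanishes on $(-\infty,0]$ (equivalently, that $0$ is a global minimum of $F^*(x,\cdot)$) to conclude the value of $\mathcal A$ does not increase. The only cosmetic difference is that you establish equality $\mathcal A(\phi,\tilde\alpha)=\mathcal A(\phi,\alpha)$ where the paper contents itself with the inequality $\le$, which is all that is needed.
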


\begin{proof} For $(\phi, \alpha)\in {\mathcal K}$, let us set $\tilde \alpha = \alpha\vee 0$. Then $(\phi, \tilde \alpha)\in {\mathcal K}$, $\tilde \alpha\geq 0$ a.e.  and 
$$
\begin{array}{rl}
\ds \mathcal A(\phi,\tilde \alpha) \; = & \ds \int_0^T\int_{\T^d}  F^*(x,\tilde \alpha(x,t))\ dxdt - \int_{\T^d} \phi(x,0)m_0(x)\ dx\\
\leq &\ds \int_0^T\int_{\T^d}  F^*(x,\alpha(x,t))\ dxdt - \int_{\T^d} \phi(x,0)m_0(x)\ dx =\mathcal A(\phi, \alpha)
\end{array}
$$
where the inequality holds because $0$ is a global minimum of $F^*(x,\cdot)$ for any $x\in \T^d$. 
\end{proof}

\begin{proof}[Proof of Proposition \ref{prop:valeursegales}]
 Inequality $\ \ds  \inf_{\phi\in \mathcal K_0} \mathcal A(\phi)\geq \inf_{(\phi, \alpha)\in {\mathcal K}} \mathcal A(\phi,\alpha)\ $ being obvious, let us check the reverse one. Let $(\phi,\alpha)\in {\mathcal K}$. From Lemma \ref{lem:positif} we can assume with loss of generality that $ \alpha\geq 0$ a.e..
 Fix $\ep>0$. Let us first slightly translate and extend $(\phi,\alpha)$ to the larger interval $[-\ep,T+\ep]$: we set 
 $$
 \tilde \phi(t,x)= \left\{\begin{array}{ll} \phi(t+2\ep, x) & {\rm if }\; t\in [-2\ep,T-2\ep)\\
 \phi_T(x)+ \lambda (T-2\ep-t) & {\rm if }\; t\in [T-2\ep,T+2\ep]
 \end{array}\right.
 $$
where $\lambda = -\max_x H(x,D\phi_T(x))$ and 
 $$
 \tilde \alpha(t,x)= \left\{\begin{array}{ll} \alpha(t+2\ep, x) & {\rm if }\; t\in [-2\ep,T-2\ep)\\
 0 & {\rm if }\; t\in [T-2\ep,T+2\ep]
 \end{array}\right.
 $$
 One easily checks that $(\tilde \phi,\tilde \alpha)$ satisfies in the sense of distribution $-\partial_t \tilde \phi+H(x,D\tilde \phi)\leq \tilde \alpha$ in 
 $(-2\ep, T+2\ep)\times \T^d$.

  We regularize $(\tilde \phi,\tilde \alpha)$ by convolution: 
let $\xi$ be a smooth convolution kernel in $\R^{d+1}$ with support in the unit ball, with $\xi\geq 0$ and $\int\xi=1$. Let us set $\xi_\ep(t,x)= \ep^{-d-1} \xi((t,x)/\ep)$ and $\phi_\ep=\xi_\ep \star \tilde \phi$. Then we have
$$
-\partial_t \phi_\ep+\xi_\ep\star H(\cdot,D\phi_\ep) \leq \xi_\ep \star \tilde \alpha \qquad {\rm in } \; (0,T)\times \T^d\;.
$$
By convexity of $H$ with respect to the second variable, we have
$$
H(x,D\phi_\ep(t,x)) \leq (\xi_\ep\star H(\cdot,D\phi))(t,x) + \beta_\ep(t,x)
$$
where 
$$
\beta_\ep(t,x) =\int_{B_\ep(t,x)}  \xi_\ep((t,x)-(s,y))\left| H(y, D\phi(s,y))-H(x,D\phi(s,y))\right| \ dsdy
$$
In view of assumption \eqref{HypHx}, we have, setting $\delta= r/\theta$ ($>1$) and $\delta'= \delta/(\delta-1)$ and using Hölder inequality, 
$$
\begin{array}{rl}
\beta_\ep(t,x) \; \leq & \ds \int_{B_\ep(t,x)}  \xi_\ep((t,x)-(s,y))|y-x|(1\vee | D\phi(s,y))|)^{\theta} \ dsdy\\ 
\leq & \ds C\ep  \left(\int_{B_\ep(t,x)} \xi_\ep^{\delta'}((t,x)-(s,y))\right)^{1/\delta'} \left(\int_{B_\ep(t,x)} (1\vee | D\phi(s,y))|)^{r}\right)^{1/\delta} \\ 
\leq & \ds C  \ep^{1-(d+1)\theta/r} (1+ \|D\phi\|_r^\theta)
\end{array}
$$
Recall that, by assumption (H2), $\theta< r/(d+1)$, so that  $1-(d+1)\theta/r>0$. Let us set
$$
\alpha_\ep=\xi_\ep \star \tilde \alpha+C  \ep^{1-(d+1)\theta/r}(1+ \|D\phi\|_r^\theta).
$$
The previous estimates show that the pair $(\phi_\ep, \alpha_\ep)$ satisfies 
\be\label{phiepalphaep}
-\partial_t \phi_\ep+H(x,D\phi_\ep) \leq \alpha_\ep \qquad {\rm in } \; (0,T)\times \T^d\;.
\ee
In order to fulfill the terminal condition $\phi_\ep(T,\cdot)=\phi_T$, we must once more slightly  modify $\phi_\ep$. For this we note that, by regularity of $\phi_T$ we have 
\be\label{phiprochephiT}
\sup_{t\in [T-\ep,T]} \|\phi_\ep(t, \cdot)-\phi_T\|_\infty\leq C\ep\;.
\ee
Let $\zeta_\ep:\R\to \R$ be a smooth, nondecreasing map, with $\zeta_\ep=0$ in $(-\infty, T-\ep]$ and $\zeta_\ep=1$ in $[T,+\infty)$ and such that $\|\zeta_\ep'\|_\infty\leq C\ep^{-1}$. We set 
$$
\tilde \phi_\ep (t,x)= (1-\zeta_\ep(t)) \phi_\ep(t,x)+\zeta_\ep(t)(\phi_T(x)+\lambda(T-t))\qquad \forall (t,x)\in [0,T]\times \T^d\;.
$$
Then $\tilde \phi_\ep(T,\cdot)=\phi_T$ and, by using the convexity of $H$, estimates \eqref{phiepalphaep} and \eqref{phiprochephiT} as well as the definition of $\zeta_\ep$, we get
$$
-\partial_t \tilde\phi_\ep +H(x,D\tilde\phi_\ep) \leq \left\{\begin{array}{ll}
\alpha_\ep & {\rm in} \; (0,T-\ep)\times \T^d\\
\alpha_\ep + C & {\rm in} \; (T-\ep,T)\times \T^d
\end{array}\right. 
$$
Therefore 
\be\label{inPbdualleq}
\begin{array}{rl}
\ds  \inf_{\psi\in \mathcal K_0} \mathcal A(\psi) \;  \leq & \ds 
\int_0^T\int_{\T^d}  F^*(x,-\partial_t\tilde \phi_\ep+H(x,D\tilde\phi_\ep) )\ dxdt - \int_{\T^d} \tilde\phi_\ep(x,0)dm_0(x) \\
\leq & \ds \int_0^{T-\ep}\int_{\T^d}  F^*(x,\alpha_\ep )\ dxdt +\int_{T-\ep}^T\int_{\T^d}  F^*(x,\alpha_\ep + C )\ dxdt
- \int_{\T^d} \tilde\phi_\ep(0,x)dm_0(x)
\end{array}
\ee
We now let $\ep\to 0$. As $\alpha_\ep\to \alpha$ in $L^p$ while $H^*$ satisfies the growth condition \eqref{HypHstar}, we have 
\be\label{limF*}
\limsup_{\ep\to 0} \int_0^{T-\ep}\int_{\T^d}  F^*(x,\alpha_\ep )\ dxdt\leq 
\int_0^T\int_{\T^d}  F^*(x,\alpha )\ dxdt\;.
\ee
In the same way, 
$$
\limsup_{\ep\to 0} \int_{T-\ep}^T\int_{\T^d}  F^*(x,\alpha_\ep + C )\ dxdt = 0.
$$
 In order to understand the convergence of the term $\ds \int_{\T^d} \tilde\phi_\ep(0,x)dm_0(x)$, we need the following Lemma, in which,  for $t_1<t_1$, we denote by $w(t_1^+,\cdot)$ and $w(t_2^-,\cdot)$ the traces, on the sets $t=t_1$ and $t=t_2$,  of a BV function $w$ restricted to $(t_1,t_2)\times \T^d$.  

\begin{Lemma}\label{lem:estiPhiT} There is a constant $\Cl[C]{estiPhiT}$ such that, for any $(\phi,\alpha)\in {\mathcal K}$ and for any $0\leq t_1< t_2\leq T$, we have
\be\label{eq:lem:estiPhiT}
\phi(t_1^+,\cdot)\leq \phi(t_2^-, \cdot) + \Cr{estiPhiT}(t_2-t_1)^\nu\|\alpha\|_p\qquad {\rm a.e.}
\ee
where 
\be\label{defnu}
\nu := \frac{r-d(q-1)}{ d(q-1)(r-1)+rq}
\ee
(recall that $r-d(q-1)>0$ by assumption (H2), so that $\nu>0$). 
\end{Lemma}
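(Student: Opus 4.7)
The plan is to regularize $\phi$ into a classical subsolution and then run the Fenchel--Young inequality along a one-parameter family of affine curves, averaged over a ball $B_\rho(x)$ whose radius is optimally balanced against $t_2-t_1$ so as to produce the announced exponent.

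As in the proof of Proposition \ref{prop:valeursegales}, I would mollify $(\phi,\alpha)$ to get $(\phi^\epsilon,\alpha^\epsilon)$ with $\phi^\epsilon\in C^\infty$ satisfying $-\partial_t\phi^\epsilon+H(x,D\phi^\epsilon)\leq\alpha^\epsilon$ pointwise and $\alpha^\epsilon\to\alpha$ in $L^p$. Along any $C^1$ curve $\gamma:[t_1,t_2]\to\T^d$, the Fenchel--Young inequality $H(x,p)+H^*(x,\xi)\geq -\langle p,\xi\rangle$ (applied with $\xi=-\dot\gamma(t)$ and $p=D\phi^\epsilon(t,\gamma(t))$) combined with the subsolution property yields $\frac{d}{dt}\phi^\epsilon(t,\gamma(t))\geq -H^*(\gamma,-\dot\gamma)-\alpha^\epsilon(t,\gamma)$, and integration in time gives
$$\phi^\epsilon(t_1,\gamma(t_1))\leq \phi^\epsilon(t_2,\gamma(t_2))+\int_{t_1}^{t_2}H^*(\gamma,-\dot\gamma)\,dt+\int_{t_1}^{t_2}\alpha^\epsilon(t,\gamma(t))\,dt.$$

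Fix $x\in\T^d$ and $\rho>0$ to be chosen. I would apply the above along the affine curves $\gamma_y(t)=(1-s)y+sx$, with $s=(t-t_1)/(t_2-t_1)$, parameterized by $y\in B_\rho(x)$, so that $\gamma_y(t_1)=y$ and $\gamma_y(t_2)=x$. The growth \eqref{HypHstar} bounds the action by $C(t_2-t_1)+C|x-y|^{r'}(t_2-t_1)^{1-r'}$, which averaged in $y$ becomes $C(t_2-t_1)+C\rho^{r'}(t_2-t_1)^{1-r'}$. For the $\alpha^\epsilon$-term, at fixed $s$ the change of variable $z=(1-s)y+sx$ has Jacobian $(1-s)^d$ and sends $B_\rho(x)$ onto $B_{(1-s)\rho}(x)$; H\"older's inequality in $z$ followed by H\"older in $s$ — whose integrability is precisely the place where the condition $r>d(q-1)$ of (H2) is used — yields a bound of the form $C\rho^{-d/p}(t_2-t_1)^{1/p'}\|\alpha^\epsilon\|_p$. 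Averaging the curve inequality in $y\in B_\rho(x)$ thus produces
$$\frac{1}{|B_\rho|}\int_{B_\rho(x)}\phi^\epsilon(t_1,y)\,dy\leq \phi^\epsilon(t_2,x)+C(t_2-t_1)+C\frac{\rho^{r'}}{(t_2-t_1)^{r'-1}}+C\rho^{-d/p}(t_2-t_1)^{1/p'}\|\alpha^\epsilon\|_p.$$

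Finally I would balance the two $\rho$-dependent terms by choosing $\rho^{r'+d/p}\sim(t_2-t_1)^{r'-1+1/p'}\|\alpha^\epsilon\|_p$; substituting back and using $1/p=(q-1)/q$ together with $1/r'=(r-1)/r$, a routine algebraic computation produces exactly $\nu=(r-d(q-1))/(d(q-1)(r-1)+rq)$ for the exponent of $t_2-t_1$. To pass from the averaged inequality to the claimed a.e.\ pointwise bound, I would invoke the H\"older regularity of $\phi^\epsilon$ supplied by Lemma \ref{CShold} to control $|\phi^\epsilon(t_1,x)-\frac{1}{|B_\rho|}\int_{B_\rho(x)}\phi^\epsilon(t_1,\cdot)|$ by $C\rho^\beta$ (which is absorbed at the chosen value of $\rho$, since $\rho\to 0$ as $t_2-t_1\to 0$), then let $\epsilon\to 0$ using the $L^1$-convergence of the BV traces $\phi^\epsilon(t_i^\pm,\cdot)$. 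The main obstacle is the sharp H\"older-in-$s$ control of the averaged $\alpha^\epsilon$-integral near $s=1$ (where the curves $\gamma_y$ collapse to $x$ and the Jacobian $(1-s)^d$ produces a singular weight): the interplay between this weight and the action term $\rho^{r'}(t_2-t_1)^{1-r'}$ is exactly what forces $\nu$ to take its specific form, and keeping all constants independent of $\phi$ itself requires careful bookkeeping in the balance.
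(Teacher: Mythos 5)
Your overall strategy (regularize, run Fenchel--Young along a family of curves joining the two time slices, average, and balance the spreading radius against $t_2-t_1$) is the same as the paper's, and your exponent bookkeeping would indeed produce $\nu$. But two steps fail as written, and both are fixed in the paper by a single device you are missing: the curves. The paper does not use affine curves from a ball $B_\rho(x)$ to the point $x$; it uses \emph{loops} $x_\sigma(s)=x+\sigma(s-t_1)^\beta$ on $[t_1,\frac{t_1+t_2}{2}]$ and $x+\sigma(t_2-s)^\beta$ on $[\frac{t_1+t_2}{2},t_2]$, averaged over the direction $\sigma\in B_1$, with a tunable profile exponent $\beta\in(\frac1r,\frac{1}{d(q-1)})$.

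The first gap is in your treatment of the $\alpha$-term. With affine curves the change of variables at parameter $s$ produces the weight $((1-s)\rho)^{-d/p}$, and the subsequent H\"older inequality in $s$ requires $\int_0^1(1-s)^{-dq/p}\,ds=\int_0^1(1-s)^{-d(q-1)}\,ds<\infty$, i.e.\ $d(q-1)<1$. Assumption (H2) only gives $r>d(q-1)$, so $d(q-1)$ may be arbitrarily large; no choice of $\rho$ repairs this, since the divergence at $s=1$ is independent of $\rho$. Your remark that ``the condition $r>d(q-1)$ is precisely the place where integrability is used'' is therefore not correct for affine curves: $r$ does not enter that term at all. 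The role of $r>d(q-1)$ is to make the interval $(\frac1r,\frac{1}{d(q-1)})$ nonempty, so that one can choose a profile $(s-t_1)^\beta$ spreading slowly enough that the weight $(s-t_1)^{-d\beta(q-1)}$ is integrable ($\beta<\frac{1}{d(q-1)}$) while the action $\int|x_\sigma'|^{r'}$ remains finite ($\beta>\frac1r$). This non-affine profile is the essential missing idea.

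The second gap is the passage from your ball-averaged inequality at time $t_1$ to the pointwise a.e.\ bound. You invoke Lemma \ref{CShold}, but that lemma concerns continuous viscosity solutions of the HJ \emph{equation} and its H\"older constant depends on $\|u\|_\infty$; a generic $(\phi,\alpha)\in\mathcal K$ is only a BV distributional subsolution, with no a priori sup bound --- indeed Lemma \ref{lem:estiPhiT} is exactly what is used later (e.g.\ in \eqref{ineq:boundphinabove}) to obtain such bounds, so the argument would be circular, and the resulting constant would depend on $\phi$, which the statement forbids. The paper avoids the issue entirely because its curves start and end at the \emph{same} point $x$, so the comparison $\phi(t_1,x)\le\phi(t_2,x)+\dots$ is pointwise from the outset; the averaging is over the excursion direction $\sigma$, not over endpoints. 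Symmetrizing your construction into affine out-and-back loops would fix this second gap but not the first; you genuinely need the power-law profile.
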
 

Admitting for a while the above result, we complete the proof of Proposition \ref{prop:valeursegales}. In view of Lemma \ref{lem:estiPhiT}, we have 
$$
\tilde \phi_\ep (0^+,\cdot) \geq \xi_\ep\star \phi(0,\cdot)-C\ep^\nu\|\alpha\|_p\;.
$$
Hence 
$$
 \liminf_{\ep\to0} \int_{\T^d} \tilde\phi_\ep(0,x)dm_0(x)\geq \int_{\T^d} \phi(0^+,x)dm_0(x)  \;, 
$$
which, combined with \eqref{inPbdualleq} and \eqref{limF*}, shows that 
$$
 \inf_{\psi\in \mathcal K_0} \mathcal A(\psi) \;  \leq\;  \int_0^T\int_{\T^d}  F^*(x,\alpha )\ dxdt - \int_{\T^d} \phi(0,x)dm_0(x)= \mathcal A(\phi,w).
$$
Taking the infimum over $(\phi,w)\in \mathcal K$ gives the result. \end{proof}

\begin{proof}[Proof of Lemma \ref{lem:estiPhiT}]  Let us first assume that $\phi$ and $\alpha$ are of class ${\mathcal C}^1$. Since $\ds r> d(q-1)$, we can also fix $\beta\in (1/r,  \frac{1}{d(q-1)})$. Let $x\in \T^d$ and $0\leq t_1<t_2\leq T$. For any $\sigma\in\R^d$ with $|\sigma|\leq 1$, let us define the arc
$$
x_\sigma(s)= \left\{\begin{array}{ll}
x+ \sigma (s-t_1)^\beta & {\rm if} \; s\in [t_1, \frac{t_1+t_2}{2}]\\
x+\sigma(t_2-s)^\beta & {\rm if} \;  s\in [\frac{t_1+t_2}{2},t_2]  
\end{array}\right.
$$
Let $L$ be the convex conjugate of $p\to H(x,-p)$, i.e., $L(x,\xi)=H^*(x,-\xi)$. Then 
$$
\begin{array}{l}
\ds \frac{d}{ds}\left[ \phi(s,x_\sigma(s))-\int_s^{t_2} L(x_\sigma(\tau), x'_\sigma(\tau))d\tau\right] \\ 
\qquad \qquad = \; \ds \partial_t \phi(s,x_\sigma(s))+ \lg D\phi(s,x_\sigma(s)), 
x'_\sigma(s)\rg +L(x_\sigma(s), x'_\sigma(s))\\ 
\qquad \qquad  \geq \;  \ds  \partial_t \phi(s,x_\sigma(s))- H(x_\sigma(s), D\phi(s,x_\sigma(s))) \;
\geq \;  \ds -\alpha(s,x_\sigma(s))
\end{array}
$$
Integrating first in time on the interval $[{t_1},t_2]$ and then in $\sigma\in B_1$ the above inequality, we get
 $$
\phi({t_1},x)\leq \phi({t_2},x) + \frac{1}{|B_1|}\int_{B_1}\int_{t_1}^{t_2}\left[  L(x_\sigma(s), x'_\sigma(s))+  \alpha(s, x_\sigma(s))\right]dsd\sigma\;.
$$
By assumption \eqref{HypHstar}, we have, on the one hand,
$$
\begin{array}{rl}
\ds \frac{1}{|B_1|}\int_{B_1}\int_{t_1}^{t_2} L(x_\sigma(s), x'_\sigma(s))\ dsd\sigma \;  \leq & \ds \Cr{C0H} \left[ \int_{B_1}\int_{t_1}^{t_2} |x'_\sigma(s))|^{r'}\ dsd\sigma+ ({t_2}-{t_1})\right] \\
\leq & \ds  C( {t_2}-{t_1})^{1-r'(1-\beta)}
\end{array}
 $$
where $1-r'(1-\beta)>0$ since $\beta >1/r$. Using Hölder's inequality, we get, on another hand, 
$$
\begin{array}{rl}
\ds \int_{B_1}\int_{t_1}^{\frac{{t_2}+{t_1}}{2}} \alpha(s, x_\sigma(s))dsd\sigma \;\leq   & \ds \int_{t_1}^{\frac{{t_1}+{t_2}}{2}} \int_{B_1} |\alpha(s, x+ \sigma (s-{t_1})^\beta )| d\sigma ds\\
\leq & \ds  \int_{t_1}^{\frac{{t_1}+{t_2}}{2}} \int_{B(x,  (s-{t_1})^\beta)}  (s-{t_1})^{-d\beta} |\alpha(s, y )| dy ds\\
\leq & \ds  \left[ \int_{t_1}^{\frac{{t_1}+{t_2}}{2}} (s-{t_1})^{-d\beta(q-1)}\right]^{\frac{1}{q}} \|\alpha\|_p\; \leq \; C ({t_2}-{t_1})^{(1-d\beta(q-1))/q} \|\alpha\|_p
\end{array}
$$
where $1-d\beta(q-1)>0$ since $\beta < \frac{1}{d(q-1)}$. In the same way, we have 
$$
 \int_{B_1}\int_{\frac{{t_1}+{t_2}}{2}}^{t_2} \alpha(s, x_\sigma(s))dsd\sigma \;\leq  \; C ({t_2}-{t_1})^{(1-d\beta(q-1))/q} \|\alpha\|_p\;,
 $$
Using the assumption $\ds r> d(q-1)$, one can check that 
$$\ds \beta:= \frac{q(r'-1)+1}{d(q-1)+r'q}= \frac{q+r-1}{d(q-1)(r-1)+rq}$$ satisfies $\beta\in (\frac{1}{r},\frac{1}{d(q-1)}) $. For this choice of $\beta$  we obtain that 
 $$
\phi({t_1},x)\leq \phi(t_2,x) + C ({t_2}-{t_1})^{\frac{r-d(q-1)}{ d(q-1)(r-1)+rq}} \|\alpha\|_p\;.
$$
One gets the result for general  $(\phi,\alpha) \in {\mathcal K}$ by regularizing $(\phi,\alpha)$ by convolution:  let $\xi_\ep$ be as  in the proof of Proposition \ref{prop:valeursegales} and $\phi_\ep=\xi_\ep\star \phi$. Then, by \eqref{phiepalphaep}, we have  
$$
-\partial_t \phi_\ep+H(x,D\phi_\ep) \leq \xi_\ep \star  \alpha +C  \ep^{1-(d+1)\theta/r}(1+ \|D\phi\|_r^\theta) \qquad {\rm in } \; (\ep,T-\ep)\times \T^d\;.
$$
Choose $0<t_1<t_2<T$ such that $\phi_\ep(t_1,\cdot)$ and $\phi_\ep(t_2, \cdot)$ converge a.e. as (a subsequence of) $\ep\to 0$. Using the result in the regular case we have 
 $$
\phi_\ep({t_1},x)\leq \phi_\ep(t_2,x) + C ({t_2}-{t_1})^{\frac{r-d(q-1)}{ d(q-1)(r-1)+rq}} \left(\|\xi_\ep \star  \alpha \|_p+C  \ep^{1-(d+1)\theta/r}(1+ \|D\phi\|_r^\theta)\right) \;.
$$
So
$$
\phi({t_1},\cdot)\leq \phi(t_2,\cdot) + C ({t_2}-{t_1})^{\frac{r-d(q-1)}{ d(q-1)(r-1)+rq}} \|\alpha\|_p\qquad {\rm a.e.}\;.
$$
The above inequality implies \eqref{eq:lem:estiPhiT} because $\phi$ is in BV. 
\end{proof}

\subsection{Existence of a solution for the relaxed problem}

The next proposition explains the interest of considering the relaxed problem \eqref{PB:dual-relaxed} instead of the original one \eqref{PB:dual2}.

\begin{Proposition}\label{Prop:existence} The relaxed problem \eqref{PB:dual-relaxed} has at least one solution $(\phi, \alpha) \in {\mathcal K}$ with the following properties: $\phi$ is continuous on $[0,T]\times \T^d$ and locally Hölder continuous in $[0,T)\times \T^d$ and satisfies in the viscosity sense 
$$
-\partial_t \phi+ H(x,D\phi)\geq 0\qquad {\rm in }\; (0,T)\times \T^d\;.
$$
Moreover, $\ds \alpha = \left(-\partial_t\phi^{ac} + H(x,D\phi)\right)\vee 0$ a.e., where $\partial_t\phi^{ac}$ denotes the absolutely continuous part of the measure $\partial_t\phi$. 
\end{Proposition}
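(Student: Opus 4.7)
I start with a minimizing sequence $(\phi_n,\alpha_n)\in\mathcal K$ of \eqref{PB:dual-relaxed}, where Lemma \ref{lem:positif} allows me to assume $\alpha_n\geq 0$ a.e. My first move is to replace each $\phi_n$ by the continuous viscosity solution $\bar\phi_n$ of
\begin{equation*}
-\partial_t\bar\phi_n+H(x,D\bar\phi_n)=\alpha_n\quad\text{in }(0,T)\times\T^d,\qquad \bar\phi_n(T,\cdot)=\phi_T,
\end{equation*}
whose existence, continuity and comparison principle with $L^p$ source follow from Lemma \ref{CShold} and the arguments of \cite{CS}. Since $\phi_n$ is a distributional subsolution of the same equation, comparison yields $\bar\phi_n\geq\phi_n$ a.e., hence $\mathcal A(\bar\phi_n,\alpha_n)\leq\mathcal A(\phi_n,\alpha_n)$, and I may assume each $\phi_n$ is itself a continuous viscosity solution with $\alpha_n\geq 0$ on the right.

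Coercivity of $F^*$ from \eqref{HypGrowthFstar}, together with the one-sided bound $\phi_n(0^+,\cdot)\leq \|\phi_T\|_\infty+\Cr{estiPhiT}T^\nu\|\alpha_n\|_p$ supplied by Lemma \ref{lem:estiPhiT} at $t_2=T$, makes $\mathcal A(\phi_n,\alpha_n)$ coercive in $\|\alpha_n\|_p$ and provides a uniform bound; integrating the HJ equation over space-time and using \eqref{HypGrowthH} then bounds $\|D\phi_n\|_r$ and $\|\phi_n\|_\infty$. Lemma \ref{CShold} now yields uniform local H\"older estimates on $\phi_n$ on every compact subset of $[0,T)\times\T^d$. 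Along a subsequence I obtain $\alpha_n\rightharpoonup\alpha$ weakly in $L^p$, $D\phi_n\rightharpoonup D\phi$ weakly in $L^r$, and $\phi_n\to\phi$ locally uniformly, with boundary behaviour at $t=T$ controlled by Lemma \ref{lem:estiPhiT}. Testing the distributional subsolution inequality against any nonnegative smooth $\psi$ and using the weak lower semicontinuity of the convex integral $D\phi\mapsto \int\psi\,H(x,D\phi)\,dxdt$ gives
\begin{equation*}
-\partial_t\phi+H(x,D\phi)\leq\alpha\quad\text{in }\mathcal D'((0,T)\times\T^d),
\end{equation*}
so $(\phi,\alpha)\in\mathcal K$. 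The viscosity supersolution inequality $-\partial_t\phi+H(x,D\phi)\geq 0$ is inherited by the classical stability of viscosity supersolutions under locally uniform convergence, applied to each $\phi_n$ (whose source $\alpha_n\geq 0$ already provides this supersolution inequality). Weak lower semicontinuity of $\int F^*(x,\alpha_n)$ and uniform convergence on $\{0\}\times\T^d$ finally yield $\mathcal A(\phi,\alpha)\leq\liminf_n\mathcal A(\phi_n,\alpha_n)$, so $(\phi,\alpha)$ is optimal.

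For the identification $\alpha=(-\partial_t\phi^{ac}+H(x,D\phi))\vee 0$ a.e.\ I argue by perturbation: the distributional inequality with $\alpha\in L^p$ forces the singular part $(\partial_t\phi)^s$ to be nonnegative and the a.c. part to satisfy $-\partial_t\phi^{ac}+H(x,D\phi)\leq\alpha$ a.e.; combined with $\alpha\geq 0$ this yields $\alpha\geq(-\partial_t\phi^{ac}+H(x,D\phi))\vee 0$. If the inequality were strict on a set $A$ of positive measure, replacing $\alpha$ by $(-\partial_t\phi^{ac}+H(x,D\phi))\vee 0$ on $A$ would still define an $\tilde\alpha$ with $(\phi,\tilde\alpha)\in\mathcal K$; since $\alpha>0$ on $A$ and $F^*(x,\cdot)$ is strictly increasing on $(0,\infty)$ (a consequence of $f(x,0)=0$ and $f$ strictly increasing, via \eqref{Hypf} and \eqref{Hypf(0,m)=0}), this would strictly decrease $\mathcal A$, contradicting optimality. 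The main obstacle throughout is the reduction to genuine viscosity solutions with only $L^p$ source, so that Lemma \ref{CShold} can be applied and so that the supersolution property passes to the limit; the second delicate point is identifying $\alpha$ via the Lebesgue decomposition of $\partial_t\phi$, where one must carefully use that the singular part carries no obstruction in the distributional inequality.
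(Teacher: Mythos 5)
Your overall architecture (minimizing sequence, coercivity via Lemma \ref{lem:estiPhiT} plus the growth of $F^*$, $L^r$/BV bounds from integrating the equation, uniform H\"older bounds from Lemma \ref{CShold}, weak lower semicontinuity and convexity of $H$ to pass to the limit in the constraint, stability of viscosity supersolutions, and the Lebesgue-decomposition argument identifying $\alpha$) coincides with the paper's, and those steps are sound. The identification of $\alpha$ at the end is also essentially the paper's argument, phrased contrapositively.

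The one genuine gap is your very first move. You start from a minimizing sequence of the \emph{relaxed} problem, so each $\phi_n$ is only a BV distributional subsolution with $D\phi_n\in L^r$ and $\alpha_n$ merely in $L^p$, and you then invoke ``the continuous viscosity solution $\bar\phi_n$ of $-\partial_t\bar\phi_n+H(x,D\bar\phi_n)=\alpha_n$'' together with a comparison principle giving $\bar\phi_n\ge\phi_n$, attributing both to Lemma \ref{CShold} and \cite{CS}. Neither is available there: \cite{CS} provides interior H\"older estimates for solutions that are already given, not well-posedness of the Cauchy problem with an $L^p$ right-hand side (for which the notion of viscosity solution is itself not standard), and certainly not a comparison principle between a BV distributional subsolution and such a solution. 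This is precisely the technical content of Lemma \ref{lem:maxsubsol}, whose proof requires (i) regularizing the subsolution by convolution and controlling the commutator error through assumption (H3) (the estimate \eqref{phiepalphaep}, which is where the restriction $\theta<r/(d+1)$ enters), (ii) correcting the terminal condition via Lemma \ref{lem:estiPhiT}, and (iii) only then comparing with viscosity solutions whose sources $\alpha_\ep$ are continuous, before passing to the limit. The paper sidesteps all of this in Proposition \ref{Prop:existence} by first invoking Proposition \ref{prop:valeursegales} to take the minimizing sequence in $\mathcal K_0$, i.e.\ among ${\mathcal C}^2$ functions, so that $\alpha_n$ is continuous and the comparison $\tilde\phi_n\ge\phi_n$ is classical. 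Your plan is repairable either by that reduction or by running the full maximal-subsolution construction at this stage, but as written the replacement step does not follow from the results you cite, and it is the step on which everything downstream (uniform bounds, applicability of Lemma \ref{CShold}, the supersolution property) depends. A second, minor point: the $L^\infty$ bound on $\phi_n$ does not come from integrating the equation; the upper bound is Lemma \ref{lem:estiPhiT} and the lower bound is comparison with the solution $\psi$ of the homogeneous problem (using $\alpha_n\ge0$), and this $L^\infty$ bound must be in hand \emph{before} the space-time integration that yields $\|D\phi_n\|_r\le C$.
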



\begin{proof}  Let $(\phi_n)$ be a minimizing sequence for problem \eqref{PB:dual2} and let us set 
$$\alpha_n(t,x)= -\partial_t\phi_n(t,x) +H(x,D\phi_n(t,x)).$$ Regularizing $\phi_n$ if necessary, we can assume without loss of generality that $\phi_n$ is ${\mathcal C}^2$.

According to Lemma \ref{lem:estiPhiT}, we have 
\be\label{ineq:boundphinabove}
\phi_n(t,x)\leq \phi_T(x)+  C(T-t)^\nu\|\alpha_n\|_p, 
\ee
where $\nu>0$ is given by \eqref{defnu}. 
From our growth condition \eqref{HypGrowthFstar} on $F^*$, we have 
$$
\begin{array}{rl}
C \; \geq & \ds \int_0^T\int_{\T^d}  F^*(x,\alpha_n) - \int_{\T^d} \phi_n(0)m_0\\ 
\geq & \ds \frac{1}{p\Cr{C0f}} \int_0^T\int_{\T^d}  \left|\alpha_n\right|^p - \Cr{C0f} T  - \|\phi_T\|_\infty-C\|\alpha_n\|_p
\end{array}
$$
Therefore $(\alpha_n)$ is bounded in $L^p$, so that, from \eqref{ineq:boundphinabove},  $(\phi_n)$ is bounded from above. 

Let now $\psi$ be the viscosity solution to 
$$
\left\{\begin{array}{l}
-\partial_t \psi +H(x,D\psi)= 0\qquad {\rm in }\; (0,T)\times \T^d\\
\psi(T,x)=\phi_T(x)\qquad {\rm in }\;  \T^d
\end{array}\right.
$$
and $\tilde \phi_n$ be the viscosity solution to 
$$
\left\{\begin{array}{l}
-\partial_t \tilde \phi_n +H(x,D\tilde \phi_n)= \alpha_n\vee 0\qquad {\rm in }\; (0,T)\times \T^d\\
\tilde \phi_n(T,x)=\phi_T(x)\qquad {\rm in }\;  \T^d
\end{array}\right.
$$
We set $\tilde \alpha_n=\alpha_n\vee 0$. Then the $\tilde \phi_n$ are Lipschitz continuous and satisfy $-\partial_t \tilde \phi_n +H(x,D\tilde \phi_n)= \tilde \alpha_n$ a.e.. By comparison, we also have $\tilde \phi_n\geq \psi$ and, recalling Lemma \ref{lem:estiPhiT} again, we conclude that $(\tilde \phi_n)$ is uniformly bounded. 

Note also that $(\tilde \phi_n,\tilde \alpha_n)$ is a minimizing sequence for the relaxed problem \eqref{PB:dual-relaxed} because, by comparison,  $\tilde \phi_n\geq \phi_n$ and  $0$ is the minimum of the map $a\to F^*(x,a)$ for any $x$, so that
$$
 \int_0^T\int_{\T^d}  F^*(x,\tilde \alpha_n) - \int_{\T^d} \tilde \phi_n(0)m_0  \leq  \int_0^T\int_{\T^d}  F^*(x, \alpha_n) - \int_{\T^d} \phi_n(0)m_0 \;.
 $$
Using the growth condition \eqref{HypGrowthH} on $H$ we have
$$
\begin{array}{l}
\ds \int_0^T \int_{\T^d} \left( \frac{1}{r\Cr{C0H}} |D\tilde \phi_n |^{r} -\Cr{C0H}\right)\;  \leq \; \ds  \int_0^T \int_{\T^d}  H(x,D\tilde \phi_n) \;
\leq \; \ds \int_0^T \int_{\T^d} \partial_t \tilde \phi_n +  \tilde \alpha_n 
 \; \leq \; C\;,
\end{array}
$$
where the last inequality holds because the $\tilde \phi_n$ are uniformly bounded. 
Accordingly, $(D\tilde \phi_n)$ is bounded in $L^r$ and $(H(x,D\tilde \phi_n) )$ is bounded in $L^1$.  Since $\partial_t \tilde \phi_n =H(x,D\tilde \phi_n)- \tilde \alpha_n$, the sequence $(\partial_t \tilde \phi_n)$  is bounded in $L^1$. This implies that $(\tilde \phi_n)$ is bounded in BV.

Following Lemma \ref{CShold}, we also know that the   $(\tilde \phi_n)$ are uniformly Hölder continuous in any compact subset of $[0,T)\times \T^d$. Accordingly we can assume that $(\tilde \phi_n)$ converge to some $\phi\in BV$ locally uniformly in any compact subset of $[0,T)\times \T^d$, while $(D\tilde \phi_n)$ converges weakly to $D\phi$ in $L^r$ and $(\alpha_n)$ converges weakly to some $\alpha$ in $L^p$. Since $H$ is convex with respect to the last variable, the pair $(\phi,\alpha)$ satisfies 
$-\partial_t\phi+H(x,D\phi)\leq \alpha$ in the sense of distribution. Finally, in view of \eqref{ineq:boundphinabove} and the Lipschitz continuity of $\psi$, we have 
\be\label{boundphiT}
\phi_T(x) - C(T-t) \leq \psi(t,x)\leq \tilde \phi_n(t,x)\leq \phi_T(x) + C(T-t)^\nu,
\ee
so that $\phi(T,x)=\phi_T(x)$ a.e.. 
In particular, $(\phi,\alpha)$ belongs to ${\mathcal K}$. Note also that
$$
 \int_0^T\int_{\T^d}  F^*(x, \alpha) - \int_{\T^d} \phi(0)m_0  \leq \liminf_n  \int_0^T\int_{\T^d}  F^*(x,\tilde \alpha_n) - \int_{\T^d} \tilde \phi_n(0)m_0 .
$$
Therefore the pair $(\phi,\alpha)$ is a minimizer for the relaxed problem \eqref{PB:dual-relaxed}. By construction, $\phi$ is locally Hölder continuous in $[0,T)\times \T^d$ and, by continuity of $\phi_T$ and \eqref{boundphiT}, $\phi$ is also continuous on $[0,T]\times \T^d$ as well. \\

Since, by definition, inequality 
$$
-\partial_t \tilde \phi_n +H(x,D\tilde \phi_n)\geq  0\qquad {\rm in }\; (0,T)\times \T^d
$$
holds in the viscosity sense, we have by passing to the limit that the following inequality holds in the viscosity sense:
$$
-\partial_t  \phi +H(x,D \phi)\geq  0\qquad {\rm in }\; (0,T)\times \T^d\;.
$$

It just remains to prove that $\ds \alpha= \left(H(x,D\phi)-\partial_t\phi^{ac}\right)\vee 0$. 
Recall first that, by construction, $\tilde \alpha_n\geq 0$, so that $\alpha\geq   0$ a.e.. Since $\alpha \in L^p$ and $H(\cdot, D\phi))\in L^1$ and since  the measure $\alpha+\partial_t\phi- H(\cdot,D\phi)$ is nonnegative, its (nonnegative) regular part is given by $\left(\alpha+\partial_t\phi- H(\cdot,D\phi)\right)^{ac}=\alpha+ \partial_t\phi^{ac}- H(\cdot,D\phi)$. Hence $\ds \alpha\geq  \left(H(x,D\phi)-\partial_t\phi^{ac}\right)\vee 0$. Note also that the (nonnegative) singular  part of the measure  $\alpha+\partial_t\phi- H(\cdot,D\phi)$ is given by $\partial_t\phi^s$. Therefore, if we set  $\tilde \alpha:= \left(H(x,D\phi)-\partial_t\phi^{ac}\right)\vee 0$, we have
$$
-\partial_t \phi+ H(x,D\phi)\leq \tilde \alpha \; \leq \; \alpha
$$
in the sense of distribution. Since $a\to F^*(x,a)$ is increasing on $[0,+\infty)$, we have 
$$
 \int_0^T\int_{\T^d}  F^*(x,\tilde \alpha(x,t))\ dxdt
 \leq  \int_0^T\int_{\T^d}  F^*(x,\alpha(x,t))\ dxdt \;.
 $$
By optimality of $(\phi,\alpha)$, this implies that  $F^*(x,\tilde \alpha(x,t))= F^*(x, \alpha(x,t))$ a.e., and therefore that $\tilde \alpha=\alpha$.
\end{proof}

Collecting the arguments used in the proof of Proposition \ref{prop:valeursegales}, Lemma \ref{lem:estiPhiT} and  \ref{Prop:existence} one can show that, given $\alpha\in L^p$,  inequality 
\be\label{subHJ}
\left\{ \begin{array}{l}
-\partial_t\phi+H(x,D\phi) \leq \alpha\qquad {\rm in }\; (0,T)\times \T^d\\
\phi(T^-,\cdot)=\phi_T \qquad {\rm in }\;  \T^d
\end{array}\right.
\ee
has a maximal subsolution. A subsolution of \eqref{subHJ} is a map $\phi\in BV$ such that $D\phi\in L^r$
and which satisfies inequality \eqref{subHJ} in the distributional sense in $(0,T)\times \T^d$. 

\begin{Lemma}\label{lem:maxsubsol} Assume that $\alpha\in L^p$ with $\alpha\geq 0$ a.e.. Then inequality
\eqref{subHJ} has a maximal subsolution $\bar \phi$. Namely $\phi\leq \bar \phi$ a.e. for any other subsolution $\phi$. 
Moreover, $\bar \phi$ is continuous in $[0,T]\times \T^d$ and locally Hölder continuous in $[0,T)\times \T^d$.
Finally, $\bar \phi$ satisfies  in the viscosity sense 
\be\label{supersol0}
-\partial_t \phi+ H(x,D\phi)\geq 0\qquad {\rm in }\; (0,T)\times \T^d\;.
\ee
\end{Lemma}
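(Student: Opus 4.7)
The plan is to construct $\bar\phi$ by regularization, mirroring Proposition \ref{Prop:existence}. Let $\alpha_n\to\alpha$ in $L^p$ be smooth and nonnegative (e.g.\ $\alpha_n=\xi_{1/n}\star\alpha$ with $\alpha$ extended by zero outside $(0,T)$), and let $\bar\phi_n$ be the unique Lipschitz viscosity solution of
$$
-\partial_t\bar\phi_n + H(x,D\bar\phi_n) = \alpha_n \ \text{ in }\ (0,T)\times\T^d, \qquad \bar\phi_n(T,\cdot)=\phi_T.
$$
Comparison with the viscosity solution $\psi$ of the homogeneous equation gives the uniform lower bound $\bar\phi_n\geq\psi$, while Lemma \ref{lem:estiPhiT} applied to $(\bar\phi_n,\alpha_n)\in{\mathcal K}$ yields the upper bound $\bar\phi_n(t,x)\leq \phi_T(x)+\Cr{estiPhiT}(T-t)^\nu\|\alpha_n\|_p$. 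Just as in the proof of Proposition \ref{Prop:existence}, the growth condition \eqref{HypGrowthH} then forces $(D\bar\phi_n)$ bounded in $L^r$ and $(\partial_t\bar\phi_n)$ bounded in $L^1$, so $(\bar\phi_n)$ is bounded in BV; Lemma \ref{CShold} furnishes uniform local Hölder regularity on compact subsets of $[0,T)\times\T^d$.

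Passing to a subsequence, $\bar\phi_n\to\bar\phi$ locally uniformly in $[0,T)\times\T^d$, with $D\bar\phi_n\rightharpoonup D\bar\phi$ weakly in $L^r$. Convexity of $H$ in the second variable and the strong $L^p$ convergence of $\alpha_n$ ensure that the distributional inequality \eqref{subHJ} passes to the limit, so $(\bar\phi,\alpha)\in{\mathcal K}$; the two-sided sandwich $\psi\leq\bar\phi_n\leq \phi_T+C(T-t)^\nu\|\alpha\|_p$ also yields continuity up to $t=T$ with the correct terminal trace. Since each $\bar\phi_n$ satisfies $-\partial_t\bar\phi_n+H(x,D\bar\phi_n)=\alpha_n\geq 0$ in the viscosity sense, stability of viscosity supersolutions under locally uniform convergence gives \eqref{supersol0} for $\bar\phi$.

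For the maximality statement, given any subsolution $\phi\in{\mathcal K}$ of \eqref{subHJ}, I would repeat the regularization step of Proposition \ref{prop:valeursegales}: extending $\phi$ past $T$ by the affine-in-time continuation with slope $-\max_x H(x,D\phi_T(x))$, convolving with $\xi_\delta$, and readjusting with the cutoff $\zeta_\delta$, one obtains $\tilde\phi_\delta\in{\mathcal C}^1$ with $\tilde\phi_\delta(T,\cdot)=\phi_T$ satisfying classically on $(0,T)\times\T^d$
$$
-\partial_t\tilde\phi_\delta + H(x,D\tilde\phi_\delta) \leq \beta_\delta := \xi_\delta\star\alpha + C\delta^{1-(d+1)\theta/r}\bigl(1+\|D\phi\|_r^\theta\bigr)+C\mathbf{1}_{(T-\delta,T)},
$$
with $\beta_\delta\to\alpha$ in $L^p$. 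Letting $\Psi_\delta$ denote the viscosity solution with right-hand side $\beta_\delta$ and terminal datum $\phi_T$, the classical-vs-viscosity comparison principle (valid because $\beta_\delta$ is smooth and bounded and $H$ is coercive) gives $\tilde\phi_\delta\leq\Psi_\delta$. The compactness argument of the first step, applied to the family $(\Psi_\delta)$, produces $\Psi_\delta\to\bar\phi$ locally uniformly in $[0,T)\times\T^d$, while $\tilde\phi_\delta\to\phi$ in $L^1_{\mathrm{loc}}$; hence $\phi\leq\bar\phi$ a.e. The principal obstacle is precisely this last identification: a priori different regularizations could produce different candidate limits, and the uniqueness of $\bar\phi$ relies on the quantitative Hölder control of Lemma \ref{CShold} together with the comparison step, which together force every viscosity limit constructed as above to be maximal (so that the $\bar\phi$ produced in the first two paragraphs does not depend on the subsequence).
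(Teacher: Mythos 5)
Your construction of a candidate $\bar\phi$ (first two paragraphs) is sound and follows the same compactness machinery as Proposition \ref{Prop:existence}: the limit is a continuous, locally H\"older continuous subsolution of \eqref{subHJ} and a viscosity supersolution of \eqref{supersol0}. The genuine gap is exactly where you flag it, and what you offer does not close it. In your maximality step the comparison function $\Psi_\delta$ solves the HJ equation with right-hand side $\beta_\delta=\xi_\delta\star\alpha+C\delta^{1-(d+1)\theta/r}(1+\|D\phi\|_r^\theta)+C\mathbf{1}_{(T-\delta,T)}$, which depends on the particular subsolution $\phi$ through $\|D\phi\|_r$ and converges to $\alpha$ only in $L^p$. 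To conclude $\phi\leq\bar\phi$ you need $\limsup_\delta\Psi_\delta\leq\bar\phi$, i.e., that viscosity solutions whose right-hand sides converge to $\alpha$ in $L^p$ all select the same limit. No such stability is available: the comparison principle orders solutions only when the right-hand sides are ordered pointwise, and Lemma \ref{CShold} gives equicontinuity (hence compactness of the family $(\Psi_\delta)$) but says nothing about which limit is attained. Asserting that ``H\"older control together with the comparison step force every limit to be maximal'' is precisely the statement to be proved, not an argument for it.

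The paper closes this gap with one extra idea that your proposal lacks: before regularizing, replace the arbitrary subsolution $\phi$ by $\hat\phi=\phi\vee\psi$, where $\psi$ is the viscosity solution of the homogeneous equation with terminal datum $\phi_T$. Then $\hat\phi$ is still a subsolution and $\int_0^T\int_{\T^d}\partial_t\hat\phi\leq\int_{\T^d}(\phi_T-\psi(0))$ is bounded independently of $\phi$; combined with the growth condition \eqref{HypGrowthH} this yields a universal bound $\|D\hat\phi\|_r\leq C_0$ with $C_0$ not depending on $\phi$. Consequently the regularization error $C\ep^{1-(d+1)\theta/r}(1+C_0^\theta)$ and the terminal shift $CC_0^\theta(2\ep)^\nu$ furnished by Lemma \ref{lem:estiPhiT} are universal, and the comparison solutions $\tilde\phi_\ep$ (viscosity solutions on $(0,T-\ep)$ with right-hand side $\xi_\ep\star\alpha+C\ep^{1-(d+1)\theta/r}(1+C_0^\theta)$ and terminal datum $\phi_T+CC_0^\theta(2\ep)^\nu$) are defined intrinsically, independently of which subsolution one started from. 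Their subsequential limit therefore dominates every subsolution simultaneously, and being itself a subsolution of \eqref{subHJ} it is the maximal one; no $L^p$-stability of the limit is required. If you insert the normalization $\phi\mapsto\phi\vee\psi$ at the start of your maximality step and compare against these intrinsic $\tilde\phi_\ep$ rather than against your $\phi$-dependent $\Psi_\delta$, your argument goes through and coincides with the paper's.
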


\begin{proof}  Let us fix $\alpha\in L^p$ with $\alpha\geq 0$ a.e. Let $\phi$ satisfy \eqref{subHJ} and $\psi$ be the viscosity solution to 
$$
\left\{\begin{array}{l}
-\partial_t \psi +H(x,D\psi)= 0\qquad {\rm in }\; (0,T)\times \T^d\\
\psi(T,x)=\phi_T(x)\qquad {\rm in }\;  \T^d
\end{array}\right.
$$
One easily checks that $\hat \phi= \phi\vee \psi$ is still a subsolution of \eqref{subHJ}. We claim that there is a constant $\Cl[C]{subHJ}>0$, independent of $\phi$, such that $\|D\hat \phi\|_r\leq \Cr{subHJ}$. Indeed, using the growth condition \eqref{HypGrowthH} on $H$ we have
$$
\begin{array}{rl}
\ds \int_0^T \int_{\T^d} \left( \frac{1}{r\Cr{C0H}} |D\hat \phi |^{r} -\Cr{C0H}\right)\;  \leq & \ds  \int_0^T \int_{\T^d}  H(x,D\hat \phi) \;
\leq \; \ds \int_0^T \int_{\T^d} \partial_t \hat \phi +  \alpha \\
\; \leq & \ds  \int_{\T^d} (\phi(T)- \psi(0)) + C
 \; \leq \; C\;.
\end{array}
$$
We now regularize $(\hat \phi,\alpha)$ by convolution:  let $\xi_\ep$ be as  in the proof of Proposition \ref{prop:valeursegales} and $\phi_\ep=\xi_\ep\star \hat \phi$. Then, by \eqref{phiepalphaep}, we have  
$$
-\partial_t \phi_\ep+H(x,D\phi_\ep) \leq \alpha_\ep \qquad {\rm in } \; (\ep,T-\ep)\times \T^d\;,
$$
where
$$
\alpha_\ep=\xi_\ep \star  \alpha+C  \ep^{1-(d+1)\theta/r}(1+ \Cr{subHJ}^\theta).
$$
Using Lemma \ref{lem:estiPhiT}, we also have that $\phi_\ep(T-\ep,x)\leq \phi_T(x)+ \Cr{estiPhiT}\Cr{subHJ}^\theta(2\ep)^\nu$. 

Let now $\tilde \phi_\ep$ be the viscosity solution of 
$$
\left\{\begin{array}{l}
\ds -\partial_t\phi+H(x,D\phi) = \alpha_\ep \qquad {\rm in }\; (0,T-\ep)\times \T^d\\
\ds \phi(T-\ep,\cdot)=\phi_T+ \Cr{estiPhiT}\Cr{subHJ}^\theta(2\ep)^\nu \qquad {\rm in }\;  \T^d
\end{array}\right.
$$
Note that $\tilde \phi_\ep$ is defined intrinsically and does not depend on the map $\phi$. In view of the estimate we proved on $\phi_\ep$, we have, by comparison, that $\tilde \phi_\ep\geq \phi_\ep$. 
Arguing as in the proof of Proposition \ref{Prop:existence}, one can check that the $(\tilde \phi_\ep)$ are uniformly bounded, that $(D\tilde \phi_\ep)$ is bounded in $L^r$, while $(H(x,D\tilde \phi_\ep) )$ is bounded in $L^1$.  Since $\partial_t \tilde \phi_\ep =H(x,D\tilde \phi_\ep)- \tilde \alpha_\ep$, the sequence $(\partial_t \tilde \phi_\ep)$  is bounded in $L^1$. This implies that $(\tilde \phi_\ep)$ is bounded in BV. Finally, as the $(\alpha_\ep)$ are bounded in $L^p$, Lemma \ref{CShold} states that the   $(\tilde \phi_\ep)$ are uniformly Hölder continuous in any compact subset of $[0,T)\times \T^d$. With all these estimates, one can show, as in  the proof of Proposition \ref{Prop:existence}, that a subsequence of the $(\tilde \phi_\ep)$ converges locally uniformly to a map $\tilde \phi$ which satisfies
\eqref{subHJ}. By construction, $\tilde \phi\geq \phi$ a.e.. This shows that $\tilde \phi$ is the maximal subsolution of \eqref{subHJ}. Moreover, as 
$$
 -\partial_t\tilde \phi_\ep+H(x,D\tilde \phi_\ep)= \alpha_\ep\geq 0\qquad {\rm in }\; (0,T-\ep)\times \T^d, 
 $$
the limit $\tilde \phi$ is also a viscosity supersolution of \eqref{supersol0}.  
\end{proof}

\section{Existence and uniqueness of a solution for the MFG system}\label{sec:exuniq}

In this section we show that the MFG system \eqref{MFG} has a unique weak solution. We first prove the existence, and then show that this solution is unique provided it satisfies an additional criterium. We complete the section by showing a stability property of the weak solution. 

\subsection{Definition of weak solutions} 

The variational method described above provides weak solutions for the MFG system. By a weak solution, we mean the following: 

\begin{Definition}\label{def:weaksolMFG} We say that a pair $(m, \phi)\in L^q((0,T)\times\T^d) \times BV((0,T)\times\T^d)$ is a weak solution to \eqref{MFG} if 
\begin{itemize}
\item[(i)] $\phi$ is continuous in $[0,T]\times \T^d$, with  
$$\ds D\phi\in L^r, \; \ds mD_pH(x,D\phi)\in L^1
\; {\rm and}\; 
\; \left(\partial_t\phi^{ac}- \lg D\phi, D_pH(x,D\phi)\rg\right)m \in L^1\;.$$ 

\item[(ii)] Equation \eqref{MFG}-(i) holds in the following sense:
\be\label{eq:ae}
\ds \quad -\partial_t \phi^{ac}(t,x) +H(x,D\phi(t,x))= f(x,m(t,x)) \quad \; \mbox{\rm a.e. in $\{m>0\}$}
\ee
 and  inequality
\be\label{eq:distrib}
\quad -\partial_t \phi +H(x,D\phi)\leq  f(x,m) \quad {\rm in}\; (0,T)\times \T^d 
\ee holds in the sense of distribution, with $\phi(T,\cdot)=\phi_T$ in the sense of trace,  

\item[(iii)] Equation \eqref{MFG}-(ii) holds:  
\be\label{eqcontdef}
\ds \quad \partial_t m-{\rm div}( mD_pH(x,D\phi))= 0\quad {\rm in }\; (0,T)\times \T^d, \qquad m(0)=m_0
\ee
in the sense of distribution,

\item[(iv)] The following equality holds: 
\be\label{defcondsup} 
\int_0^T\int_{\T^d} m\left(\partial_t\phi^{ac}- \lg D\phi, D_pH(x,D\phi)\rg \right)= \int_{\T^d} m(T)\phi_T-m_0\phi(0). 
\ee 
\end{itemize}
\end{Definition}

The definition, inspired by \cite{CCN},  requires some comments. First we note that the above (in)equalities have a meaning. Indeed, the growth condition \eqref{HypGrowthH} on $H$ together with assumption $D\phi\in L^r$ imply that the term $H(x,D\phi)$ is integrable. In the same way, as $m\in  L^q$ and $f$ has a growth given by \eqref{Hypf}, the term $f(\cdot,m(\cdot,\cdot))$ belongs to $L^p$, and, in particular, is integrable. Therefore requiring that \eqref{eq:distrib} holds in the sense of distribution has a sense. Analogously, the condition $mD_pH(x,D\phi)\in L^1$ ensures that \eqref{eqcontdef} makes sense, while the condition $\left(\partial_t\phi^{ac}- \lg D\phi, D_pH(x,D\phi)\rg\right)m \in L^1$ ensures the same holds for (iv). 

Next we note that condition (iii) gives the natural meaning to equation  \eqref{MFG}-(ii). The interpretation of \eqref{MFG}-(i) through condition (ii) is less obvious. Let us first point out that,  since $m$ is discontinuous, one cannot expect \eqref{MFG}-(i) to hold in a classical viscosity sense. Moreover, if first order Hamilton-Jacobi equations with a discontinuous right-hand side have been discussed in several papers (see e.g., \cite{CaSi}, \cite{ChH}, and the references therein), none of these references allows for a general form as \eqref{MFG}-(i).  Equality \eqref{eq:ae} is very close to requiring that \eqref{MFG}-(i) holds almost everywhere in $\{m>0\}$ (this would be the case if, for instance, $\phi$ was Lipschitz continuous---recall that $\partial_t \phi^{ac}$ denotes the absolutely continuous part of the measure $\partial_t \phi$). However, the meaning of the equation in the set $\{m=0\}$ is must less clear: inequality \eqref{eq:distrib} says that at least one inequality must hold.
 
We now discuss condition (iv).  When there is no regularity issue, i.e., when $\phi$ is smooth enough, condition (iv) is a simple consequence of (iii): just multiply \eqref{eqcontdef} by $\phi$ and integrate by parts to get 
$$
\int_0^T\int_{\T^d} m\left(\partial_t\phi- \lg D\phi, D_pH(x,D\phi)\rg \right)= \int_{\T^d} m(T)\phi_T-m_0\phi(0). 
$$
However, as $\partial_t\phi$ is a measure while $m$ is just integrable, the left-hand side of the above equality
has little meaning in general. So point (iv) explains that one can replace $\partial_t\phi$ by 
$\partial_t\phi^{ac}$ in the above expression. This roughly means that $\partial_t \phi^s=0$ in $\{m>0\}$. \\

Our main result is the following existence and uniqueness theorem:

\begin{Theorem}\label{theo:mainex} There exists a unique weak solution $(m,\phi)$ to the MFG system \eqref{MFG} which  satisfies  in the viscosity sense 
\be\label{CondSup}
-\partial_t \phi+ H(x,D\phi)\geq 0\qquad {\rm in }\; (0,T)\times \T^d\;.
\ee 
Moreover, the map $\phi$  is  locally Hölder continuous in $[0,T)\times \T^d$. 
\end{Theorem}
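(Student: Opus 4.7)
The plan is to construct a weak solution from the two dual variational problems of Section \ref{sec:2opti}, then to establish uniqueness via a Lasry--Lions type monotonicity argument adapted to the BV/$L^q$ setting.

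For existence, I would take $(\phi^*,\alpha^*)$ a minimizer of the relaxed problem \eqref{PB:dual-relaxed} from Proposition \ref{Prop:existence} and $(m^*,w^*)$ the unique minimizer of \eqref{Pb:mw2} from Lemma \ref{Lem:dualite}. The duality $\mathcal{A}(\phi^*,\alpha^*)+\mathcal{B}(m^*,w^*)=0$, combined with the two Fenchel--Young inequalities
$$
F^*(x,\alpha^*)+F(x,m^*) \geq \alpha^* m^*, \qquad m^* H^*\!\left(x,-\frac{w^*}{m^*}\right)+m^* H(x,D\phi^*) \geq -\langle w^*, D\phi^*\rangle,
$$
and a (regularized) integration by parts using $-\partial_t\phi^*+H(x,D\phi^*)\leq \alpha^*$ and $\partial_t m^*+\dive(w^*)=0$, forces all of these inequalities to be equalities. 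This yields: (a) $\alpha^* = f(x,m^*)$ a.e.\ on $\{m^*>0\}$ and both vanish on $\{m^*=0\}$ (using $f(x,0)=0$); (b) $w^* = -m^* D_pH(x,D\phi^*)$, whence $m^* D_pH(x,D\phi^*)\in L^1$; (c) the identity \eqref{eq:ae} in $\{m^*>0\}$; and (d) the integration-by-parts identity \eqref{defcondsup}. Setting $(m,\phi):=(m^*,\phi^*)$, the continuity equation \eqref{eqcontdef} is then (b), \eqref{eq:distrib} follows from (a) and $f(x,m^*)\geq 0$, and the supersolution inequality \eqref{CondSup} together with the H\"older regularity of $\phi^*$ are given by Proposition \ref{Prop:existence}.

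For uniqueness, let $(m_i,\phi_i)$ ($i=1,2$) be two weak solutions satisfying \eqref{CondSup}. The plan is to test the difference of the continuity equations against $\phi_1-\phi_2$ and the difference of the HJ equations against $m_1-m_2$; condition \eqref{defcondsup} is exactly what allows one to pair $\partial_t(\phi_1-\phi_2)$ (a measure) with $m_1-m_2$ (only $L^q$), since $\partial_t\phi_i^s$ does not interact with $m_i$. After rearranging one obtains
$$
\int_0^T\!\!\int_{\T^d} (f(x,m_1)-f(x,m_2))(m_1-m_2)\,dxdt + \sum_{i=1}^{2}\int_0^T\!\!\int_{\T^d} m_i\, E_i\,dxdt = 0,
$$
where $E_i := H(x,D\phi_{3-i})-H(x,D\phi_i)-\langle D_pH(x,D\phi_i),D\phi_{3-i}-D\phi_i\rangle \geq 0$ by convexity of $H$. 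Strict monotonicity of $f(x,\cdot)$ then forces $m_1=m_2$ a.e., and strict convexity of $H(x,\cdot)$ forces $D\phi_1=D\phi_2$ a.e.\ on $\{m_1>0\}$. To conclude that $\phi_1=\phi_2$ I would use that, with $m:=m_1=m_2$ fixed, both $\phi_i$ are subsolutions of \eqref{subHJ} with right-hand side $f(x,m)\in L^p$; the supersolution property \eqref{CondSup} together with Lemma \ref{lem:maxsubsol} should identify each of them with the unique maximal subsolution.

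The main obstacle is making the integration-by-parts arguments rigorous when $\phi$ is only BV in time and $m$ is merely $L^q$: both the existence and uniqueness proofs reduce, at their core, to pairing $\partial_t\phi$ against $m$. Condition \eqref{defcondsup} in Definition \ref{def:weaksolMFG} is crafted exactly for this purpose, and the technical tool will be convolution in time, together with the convexity estimate developed in the proof of Proposition \ref{prop:valeursegales} to control the error term on the HJ side before passing to the limit.
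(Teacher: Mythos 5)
Your existence argument is correct and is essentially the paper's: Proposition \ref{Prop:existence} produces a minimizer $(\phi,\alpha)$ of \eqref{PB:dual-relaxed} with the stated regularity and \eqref{CondSup}, and the first part of Theorem \ref{theo:main} turns the saturation of the duality gap (via Lemma \ref{Lem: ineqq}) into properties (i)--(iv) of Definition \ref{def:weaksolMFG}. The uniqueness half, however, has two genuine gaps. First, the Lasry--Lions monotonicity computation requires pairing $\partial_t\phi_2$ against $m_1$ and $\partial_t\phi_1$ against $m_2$; condition \eqref{defcondsup} only gives meaning to the \emph{diagonal} pairings $\int m_i\,\partial_t\phi_i^{ac}$, and says nothing about whether $\partial_t\phi_2^{s}$ charges $\{m_1>0\}$. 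In addition, \eqref{eq:ae} is an equality only on $\{m_i>0\}$ and merely a distributional inequality elsewhere, so multiplying by $m_1-m_2$, which changes sign, does not yield the claimed identity. The paper sidesteps all of this: $m$ is unique simply because, by the converse part of Theorem \ref{theo:main}, $(m,-mD_pH(\cdot,D\phi))$ must be the minimizer of \eqref{Pb:mw2}, which is unique by strict convexity (Lemma \ref{Lem:dualite}); this is the variational avatar of the monotonicity argument and avoids the ill-defined cross terms.

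Second, and more seriously, your final step --- ``\eqref{CondSup} together with Lemma \ref{lem:maxsubsol} identifies each $\phi_i$ with the maximal subsolution $\bar\phi$'' --- does not follow. A subsolution of \eqref{subHJ} that is also a viscosity supersolution of the \emph{homogeneous} inequality \eqref{supersol0} need not be maximal: on the region where $\alpha=f(\cdot,m)>0$, the homogeneous supersolution property gives no lower bound matching $\bar\phi$. The comparison argument only closes on the open set ${\mathcal O}=\{\phi<\bar\phi\}$ \emph{after} one knows that $m=0$ a.e.\ there, so that $\bar\phi$ is a viscosity subsolution of the homogeneous equation on ${\mathcal O}$ while $\phi$ is a supersolution with equal boundary data. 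Knowing $m=0$ on ${\mathcal O}$ is exactly the statement $\phi=\bar\phi$ on $\{m>0\}$, i.e.\ Theorem \ref{theo:unique}, whose proof requires the Lagrangian representation of $m$ by measures on curves (Lemmas \ref{lem:t1t2} and \ref{lem:equalpsi}): one shows that $\eta$-a.e.\ optimal trajectory realizes equality in the dynamic programming inequality on every subinterval $[t,T]$, which pins down $\int\phi(t)\,m(t)$ independently of the choice of minimizer and hence forces $\phi(t,\cdot)=\bar\phi(t,\cdot)$ on $\{m(t,\cdot)>0\}$. This superposition step is entirely absent from your proposal and cannot be replaced by the monotonicity identity (which at best gives $D\phi_1=D\phi_2$ on $\{m>0\}$, not equality of the functions themselves).
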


The existence part of the result relies on Theorem \ref{theo:main} below, which makes the link between weak solutions and the two optimization problems \eqref{Pb:mw2} and \eqref{PB:dual-relaxed}. Uniqueness cannot be expected in general: in fact, Theorem \ref{theo:unique} below explains that $m$ is always unique, but that $\phi$ is only determined on the set $\{m>0\}$. To have a full uniqueness result, on must add  condition \eqref{CondSup}: this condition is natural in the context, since one expects the right-hand side of \eqref{MFG}-(i) to be nonnegative. The proof of Theorem \ref{theo:mainex}---postponed to the end of subsection~\ref{subsec:uniq}---also shows that $\phi$ is the maximal solution of \eqref{subHJ} associated  with $\alpha=f(\cdot, m)$.

\subsection{Existence of a weak solution} 

The first step towards the proof of Theorem \ref{theo:mainex} consists in showing a one-to-one equivalence between solutions of the MFG system and the two optimizations problems \eqref{Pb:mw2} and \eqref{PB:dual-relaxed}.

\begin{Theorem}\label{theo:main}
If $(m,w)\in \mathcal K_1$ is a minimizer of \eqref{Pb:mw2} and $(\phi, \alpha)\in \mathcal K$ is a minimizer of \eqref{PB:dual-relaxed} such that $\phi$ is continuous, then $(m,\phi)$ is a solution of the mean field game system \eqref{MFG} and $w= -mD_pH(\cdot,D\phi)$ while $\alpha= f(\cdot,m)$ a.e.. 

Conversely, any weak solution of \eqref{MFG}  is such that the pair $(m,-mD_pH(\cdot,D\phi))$ is the minimizer of \eqref{Pb:mw2} while $(\phi, f(\cdot,m))$ is a minimizer of \eqref{PB:dual-relaxed}. 
\end{Theorem}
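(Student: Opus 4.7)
The proof rests on the duality identity $\mathcal{A}(\phi,\alpha)+\mathcal{B}(m,w)=0$ provided by Lemma \ref{Lem:dualite} and on two Fenchel--Young inequalities, combined with an integration by parts between $m$ and $\phi$ based on the continuity equation.

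\emph{Forward direction.} Let $(m,w)$ and $(\phi,\alpha)$ be as in the statement. I would write the duality identity in the form
\begin{equation*}
\int_0^T\!\!\int_{\T^d}\!\bigl[F^*(x,\alpha)+F(x,m)+mH^*(x,-w/m)\bigr]\,dxdt
+\int_{\T^d}\!\bigl[\phi_T\, m(T)-\phi(0)\,m_0\bigr]\,dx=0,
\end{equation*}
and bound the integrand from below using the two Fenchel inequalities
\begin{equation*}
F^*(x,\alpha)+F(x,m)\geq m\alpha,\qquad mH^*(x,-w/m)\geq -\langle w,D\phi\rangle-mH(x,D\phi),
\end{equation*}
whose equality cases are $\alpha=f(x,m)$ and $w=-mD_pH(x,D\phi)$ respectively (recalling from the proof of Lemma \ref{Lem:dualite} that $w=0$ on $\{m=0\}$ and using the normalization $f(x,0)=0$ with $F^*(x,\cdot)\geq 0$). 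The aim is then to rewrite the boundary terms using the continuity equation so that the lower bound becomes $\int\!\!\int m[\alpha+\partial_t\phi^{ac}-H(x,D\phi)]\geq 0$ (where the non-negativity comes from the distributional subsolution inequality \eqref{ineq:phi}, noting that the singular part of $\partial_t\phi$ is a non-negative measure because $-\partial_t\phi\leq \alpha-H(x,D\phi)$ in distribution with the right-hand side in $L^1$).

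\emph{The integration-by-parts step.} The main obstacle is to make rigorous the identity
\begin{equation*}
\int_{\T^d}\bigl[\phi_T\,m(T)-\phi(0)\,m_0\bigr]\,dx=\int_0^T\!\!\int_{\T^d}\bigl[m\,\partial_t\phi^{ac}+\langle w,D\phi\rangle\bigr]\,dxdt+\int_0^T\!\!\int_{\T^d} m\,d(\partial_t\phi^s),
\end{equation*}
since $\phi$ is only BV while $m$ is only $L^q$. I would proceed by smoothing: set $\phi_\varepsilon=\xi_\varepsilon\star\tilde\phi$ (using the extension and convolution scheme of Proposition \ref{prop:valeursegales}), test the continuity equation against $\phi_\varepsilon$ to obtain an exact identity, and pass to the limit using the continuity of $\phi$ at $t=0,T$, the weak convergence of $\partial_t\phi_\varepsilon$ to $\partial_t\phi$ against $\xi_\varepsilon\star m$, and the estimate $\int |w|^{r'}/m^{r'-1}<\infty$ combined with the integrability in condition (i) to handle the $\langle w,D\phi\rangle$ term (using $w=0$ on $\{m=0\}$). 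Once the identity is established, comparing with $\mathcal{A}+\mathcal{B}=0$ forces equality in every Fenchel inequality, which yields $\alpha=f(x,m)$ and $w=-mD_pH(x,D\phi)$ a.e., as well as the a.e.\ identity \eqref{eq:ae} on $\{m>0\}$; the boundary terms then give condition (iv) (equivalently, $\partial_t\phi^s$ does not charge $\{m>0\}$).

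\emph{Converse direction.} Given a weak solution $(m,\phi)$, set $w:=-mD_pH(x,D\phi)$ and $\alpha:=f(x,m)$. Condition (i) ensures $(m,w)\in\mathcal{K}_1$ (continuity equation from (iii)) and $(\phi,\alpha)\in\mathcal{K}$ (from \eqref{eq:distrib} and the trace condition $\phi(T,\cdot)=\phi_T$). By the equality cases of Fenchel--Young, $F^*(x,\alpha)+F(x,m)=mf(x,m)$ and $mH^*(x,-w/m)=m\langle D_pH(x,D\phi),D\phi\rangle-mH(x,D\phi)$; using \eqref{eq:ae} to replace $mf(x,m)$ by $m[-\partial_t\phi^{ac}+H(x,D\phi)]$ (the identity extending to $\{m=0\}$ trivially) and condition (iv) to collapse the boundary terms yields $\mathcal{A}(\phi,\alpha)+\mathcal{B}(m,w)=0$. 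By Lemma \ref{Lem:dualite}, $\mathcal{A}(\phi,\alpha)\geq\inf\mathcal{A}=-\min\mathcal{B}\geq -\mathcal{B}(m,w)$, so equality in this chain forces optimality of both pairs. The hardest step remains the integration by parts in the forward direction; everything else is a bookkeeping exercise once the Fenchel structure and the duality identity are in hand.
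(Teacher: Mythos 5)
Your overall strategy (duality value $+$ two Fenchel--Young inequalities $+$ an integration by parts through the continuity equation) is the same as the paper's, and your converse direction is correct: establishing $\mathcal A(\phi,\alpha)+\mathcal B(m,w)=0$ from (ii), (iv) and the Fenchel equality cases, then squeezing against $\inf\mathcal A=-\min\mathcal B$, is a legitimate (and slightly tidier) repackaging of the paper's direct comparison with arbitrary competitors.

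The genuine gap is in your ``integration-by-parts step.'' The exact identity you propose contains the term $\int_0^T\!\!\int_{\T^d} m\, d(\partial_t\phi^s)$, i.e.\ the pairing of the $L^q$ density $m$ with the singular part of the measure $\partial_t\phi$. This object is simply not defined, and your proposed limit passage does not produce it: $\partial_t\phi_\ep\rightharpoonup\partial_t\phi$ only weakly-$*$ as measures while $\xi_\ep\star m\to m$ only in $L^q$, and the duality product of a measure with an $L^q$ function is not continuous along such convergences (this is exactly the difficulty the definition of weak solution is designed to circumvent --- point (iv) is a substitute for, not a consequence of, the naive integration by parts). The way out, which is what the paper does in Lemma \ref{Lem: ineqq}, is to never state an identity at the limit: apply Fenchel--Young and the mollified subsolution inequality $-\partial_t\phi_\ep+H(x,D\phi_\ep)\le\alpha_\ep$ \emph{before} passing to the limit, use the continuity equation against the smooth $\phi_\ep$ on $(\delta,T-\delta)$, and let $\ep\to0$ then $\delta\to0$ keeping only the one-sided inequality $\int_0^T\!\!\int_{\T^d}(\alpha+H^*(x,-w/m))m+\int_{\T^d}\phi_T m(T)-\phi(0)m_0\ge 0$; the only ingredients needed are $\alpha_\ep\to\alpha$ in $L^p$ against $m\in L^q$ and the uniform convergence $\phi_\ep\to\phi$ near $t=0,T$ (here the continuity of $\phi$ is essential). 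The singular part of $\partial_t\phi$ is absorbed into the inequality and never paired with $m$. Once this inequality is in hand, equality in the full chain forces all your conclusions ($\alpha=f(x,m)$, $w=-mD_pH(x,D\phi)$, \eqref{eq:ae} via the optimality characterization $\alpha=(-\partial_t\phi^{ac}+H(x,D\phi))\vee0$, and (iv)), exactly as you intend. So the architecture is right, but the central analytic step must be run as an inequality at the mollified level rather than as an identity at the limit.
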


 The proof of Theorem \ref{theo:main} requires a preliminary Lemma:

\begin{Lemma}\label{Lem: ineqq} Let $(m,w) \in L^q((0,T)\times \T^d) \times L^1((0,T)\times \T^d,\R^d)$ satisfy the continuity equation
$$\quad \partial_t m+{\rm div} (w)=0\quad {\rm in}\; (0,T)\times \T^d,
\qquad m(0)=m_0
$$ and be such that $\ds mH^*(\cdot, -\frac{w}{m}) \in L^1$ and let $(\phi,\alpha)\in {\mathcal K}$ with $\alpha\geq0$ a.e.. Then 
\be\label{ineqq}
\int_0^T \int_{\T^d} \left(\alpha+  H^*(x, -\frac{w}{m})\right)m + \int_{\T^d} \phi_Tm(T)-\phi(0)m_0 \; \geq \; 0\;.
\ee
Moreover, if $\phi$ is continuous in $[0,T]\times \T^d$ and if equality holds in \eqref{ineqq}, then 
$$w(t,x)=-m(t,x)D_pH(x,D\phi(t,x))\; {\rm a.e.}
$$
\end{Lemma}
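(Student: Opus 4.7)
The strategy is a standard duality computation: multiply the continuity equation by $\phi$, integrate by parts in space-time, insert the HJ subsolution inequality $-\partial_t\phi+H(x,D\phi)\leq \alpha$, and close the estimate with the Fenchel inequality $H(x,D\phi)+H^{*}(x,-w/m)\geq \langle D\phi,-w/m\rangle$ (multiplied by the nonnegative weight $m$). At the formal level this chain gives
\[
\int_{\T^d}\phi_T m(T)-\phi(0)m_0
 = \int_0^T\!\!\int_{\T^d} m\,\partial_t\phi + D\phi\cdot w
 \geq -\int_0^T\!\!\int_{\T^d} m\alpha + m H^*(x,-w/m),
\]
which is exactly the claimed inequality.

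The real work is to make this rigorous since $\phi\in BV$ and $m\in L^q$ cannot be paired directly. I would therefore mimic the regularization scheme already used in the proof of Proposition~\ref{prop:valeursegales} and Lemma~\ref{lem:estiPhiT}: extend $(\phi,\alpha)$ slightly past $t=0$ and $t=T$ (using an affine-in-time extension on the right that respects the terminal trace and a trivial extension on the left), then set $\phi_\ep=\xi_\ep\star\tilde\phi$. By convexity of $H$ together with (H3), the pair $(\phi_\ep,\alpha_\ep)$ with $\alpha_\ep=\xi_\ep\star\tilde\alpha+C\ep^{1-(d+1)\theta/r}(1+\|D\phi\|_r^\theta)$ satisfies $-\partial_t\phi_\ep+H(x,D\phi_\ep)\leq\alpha_\ep$ pointwise on $(0,T)\times\T^d$. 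Multiplying the continuity equation by $\phi_\ep$ (legitimate because $\phi_\ep$ is smooth) and integrating by parts produces
\[
\int_0^T\!\!\int_{\T^d} m\,\partial_t\phi_\ep + D\phi_\ep\cdot w = \int_{\T^d}\phi_\ep(T)m(T)-\phi_\ep(0)m_0,
\]
and inserting $\partial_t\phi_\ep\geq H(x,D\phi_\ep)-\alpha_\ep$ together with the Fenchel inequality (still multiplied by $m\geq 0$) yields the inequality at level $\ep$.

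Passing to the limit is where I would spend the most care. On the left, $\alpha_\ep\to\alpha$ in $L^p$ (the additive error tends to $0$ since $\theta<r/(d+1)$) and $m\in L^q=L^{p'}$, so $\int m\alpha_\ep\to\int m\alpha$; the $H^*$ term is unchanged. On the boundary, continuity of $\phi$ on $[0,T]\times\T^d$ (available by hypothesis) allows $\phi_\ep(0,\cdot)\to\phi(0,\cdot)$ uniformly and $\phi_\ep(T,\cdot)\to\phi_T$ uniformly, so the trace terms converge. This gives \eqref{ineqq}. The step I expect to be most delicate is the very end of the time extension at $T$, where a small boundary layer of size $\ep$ appears with an $O(1)$ extra right-hand side (as in the proof of Proposition~\ref{prop:valeursegales}); one needs to check that its contribution $\int_{T-\ep}^T\int m(\alpha_\ep+C)$ vanishes, which follows from $m\in L^q\subset L^1$ and $\alpha_\ep\in L^p$ by H\"older.

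For the equality case, if \eqref{ineqq} is saturated then every inequality used in the chain must be an equality a.e. In particular the Fenchel inequality
\[
mH(x,D\phi) + mH^{*}\!\left(x,-\tfrac{w}{m}\right) \geq \bigl\langle D\phi,-w\bigr\rangle
\]
must be an equality a.e.\ on $\{m>0\}$. Since $H(x,\cdot)$ is convex and differentiable and $H^*(x,\cdot)$ is its Legendre transform, equality forces $-w/m = D_pH(x,D\phi)$ a.e.\ on $\{m>0\}$, i.e.\ $w=-mD_pH(x,D\phi)$ there. On $\{m=0\}$ the convention for $mH^*(x,-w/m)$ together with the assumption $mH^*(x,-w/m)\in L^1$ already forces $w=0$, so the identity $w=-mD_pH(x,D\phi)$ holds a.e.\ on the whole domain.
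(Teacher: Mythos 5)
Your overall strategy --- regularize $\phi$ by convolution using (H3), test the continuity equation against $\phi_\ep$, and close with Young's inequality $m\bigl(H(x,D\phi_\ep)+H^*(x,-w/m)\bigr)\geq -\langle D\phi_\ep,w\rangle$ --- is exactly the paper's. But there is a genuine gap in your treatment of the boundary terms for the first assertion. You write that continuity of $\phi$ on $[0,T]\times\T^d$ is ``available by hypothesis'': it is not. The continuity hypothesis appears only in the \emph{second} (equality) part of the Lemma; for inequality \eqref{ineqq} the pair $(\phi,\alpha)$ is an arbitrary element of $\mathcal K$, so $\phi$ is merely $BV$ with $D\phi\in L^r$. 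Your claim that $\phi_\ep(0,\cdot)\to\phi(0,\cdot)$ uniformly therefore has no justification, and the convergence of $\int\phi_\ep(0)m_0$ and of the pairing of $\phi_\ep(T)$ with the (a priori only weak-*) limit measure $m(T)$ is precisely the delicate point. The paper resolves this by first reducing to the \emph{maximal} subsolution $\bar\phi$ of \eqref{subHJ} given by Lemma \ref{lem:maxsubsol}: since $\phi\leq\bar\phi$ a.e.\ and $\phi(0)$ enters \eqref{ineqq} with a minus sign, it suffices to prove the inequality for $\bar\phi$, which \emph{is} continuous on $[0,T]\times\T^d$; it then works on $[\delta,T-\delta]$, lets $\ep\to0$ using local uniform convergence of $\bar\phi_\ep$, and finally lets $\delta\to0$ using the continuity of $t\mapsto m(t)$ in $P(\T^d)$ --- no extension past $t=0,T$ and no boundary layer are needed. (Alternatively, your $t=0$ term could be salvaged without continuity via the one-sided estimate of Lemma \ref{lem:estiPhiT}, which gives $\liminf_\ep\int\phi_\ep(0)m_0\geq\int\phi(0^+)m_0$, the favorable direction; but you must say one of these things.)

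A second, smaller gap is in the equality case: the chain of inequalities you saturate is written at the level of the regularized $\phi_\ep$, whereas the conclusion you need is the pointwise identity $m\bigl(H(x,D\phi)+H^*(x,-w/m)\bigr)=-\langle D\phi,w\rangle$ a.e.\ for the \emph{limit} $\phi$. ``Every inequality in the chain must be an equality a.e.'' does not follow directly, because the passage $\ep\to0$ only preserves integral inequalities. The paper argues by contradiction: if the Young inequality failed on a set of positive measure, then by Fatou (using $D\phi_\ep\to D\phi$ a.e.) the defect would persist at level $\ep$ and produce a strictly positive lower bound in \eqref{ineqq}, contradicting equality. Once that is in place, your conclusion $-w/m=D_pH(x,D\phi)$ on $\{m>0\}$ (by differentiability of $H$ in $p$) and $w=0$ on $\{m=0\}$ (from the convention and $mH^*(\cdot,-w/m)\in L^1$) is correct and matches the paper.
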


\begin{proof} As $\ds mH^*(\cdot, -\frac{w}{m}) \in L^1$, the growth condition \eqref{HypHstar} on $H^*$  implies that $v:= w/m$ belongs to $L^{r'}((0,T)\times \T^d, m)$. So  
the continuity equation can be rewritten as 
$$\quad \partial_t m+{\rm div} (mv)=0\quad {\rm in}\; (0,T)\times \T^d,
\qquad m(0)=m_0
$$
In particular, standard results for this equation (see, e.g., \cite{AGS}) imply that  $m\geq 0$ a.e. and that $t\to m(t)$ is continuous from $[0,T]$ to $P(\T^d)$. 

It is clearly enough to show that \eqref{ineqq} holds for the maximal subsolution of \eqref{subHJ} defined in Lemma \ref{lem:maxsubsol}.  Recall that this maximal subsolution is continuous in $[0,T]\times \T^d$, which is all we shall need here. 
 Fix $\delta>0$ small and let $\xi$ be the convolution kernel as defined in the proof of Proposition \ref{prop:valeursegales}. We set $\xi_\ep(t,x)= \ep^{-d-1} \xi((t,x)/\ep)$ and $\phi_\ep=\xi_\ep \star \phi$. By \eqref{phiepalphaep}, we have 
$$
-\partial_t \phi_\ep+H(x,D\phi_\ep) \leq \alpha_\ep  \qquad {\rm in } \; (\ep,T-\ep)\times \T^d\;,
$$
where $\alpha_\ep=\xi_\ep \star \alpha+ C  \ep^{1-(d+1)\theta/r}(1+ \|D\phi\|_r^\theta)$. Then, for $0<\ep<\delta$,  
$$
\begin{array}{rl}
\ds \int_\delta^{T-\delta}\int_{\T^d} \left(\alpha_\ep+  H^*(x, -\frac{w}{m})\right)m \; \geq & 
\ds \int_\delta^{T-\delta}\int_{\T^d} \left(-\partial_t \phi_\ep+H(x,D\phi_\ep)+  H^*(x, -\frac{w}{m})\right)m \\
\geq & \ds \int_\delta^{T-\delta}\int_{\T^d} \left(-m \partial_t \phi_\ep-\lg D\phi_\ep, w\rg \right) \\
\geq & \ds - \int_{\T^d} \left(\phi_\ep(T-\delta)m(T-\delta)-\phi_\ep(\delta)m(\delta)\right) 
\end{array}
$$
since the pair $(m,w)$ satisfies the continuity equation. 
Letting $\ep\to 0$ we get, by continuity of $\phi$, 
$$
\int_\delta^{T-\delta}\int_{\T^d} \left(\alpha+  H^*(x, -\frac{w}{m})\right)m \; \geq\; 
- \int_{\T^d} \left(\phi(T-\delta)m(T-\delta)-\phi(\delta)m(\delta)\right).
$$
Using again the continuity of $\phi$ and the continuity of the map $t\to m(t)$ for the weak-* convergence of measures, we obtain \eqref{ineqq}. 

Let us now assume $\phi$ is continuous in $[0,T]\times \T^d$ and that equality holds in \eqref{ineqq}. We first claim that 
\be\label{H+H*=}
\left(H(x,D\phi)+  H^*(x, -\frac{w}{m})\right)m = -\lg D\phi, w\rg \qquad \mbox{\rm a.e. in }(0,T)\times \T^d.
\ee
Indeed, assume that \eqref{H+H*=} does not hold. Then there are $\theta>0$, $\delta>0$ with 
$$
\int_{\delta}^{T-\delta} \inte \left[\left(H(x,D\phi)+  H^*(x, -\frac{w}{m})\right)m + \lg D\phi, w\rg \right] \geq \theta.
$$
Since $D\phi_\ep\to D\phi$ a.e. as $\ep\to 0$, we get by Fatou
$$
\int_{\delta}^{T-\delta} \inte \left[ \left(H(x,D\phi_\ep)+  H^*(x, -\frac{w}{m})\right)m + \lg D\phi_\ep, w\rg \right] \geq \theta/2
$$
for $\ep>0$ sufficiently small. Applying the construction of the first part of the proof (where we only used the continuity of $\phi$), we obtain therefore
$$
\int_0^T \int_{\T^d} \left(\alpha+  H^*(x, -\frac{w}{m})\right)m + \int_{\T^d} \phi_Tm(T)-\phi(0)m_0 \; \geq \; \theta/2\;,
$$
which contradicts our assumption. So \eqref{H+H*=} holds, which implies that $$\ds w(t,x)= -m(t,x)D_pH(x,D\phi(t,x))\qquad\mbox{\rm a.e. in $\{m>0\}$.}
$$
 By the coercivity assumption \eqref{HypHstar} on $H^*$, $\ds mH^*(\cdot, -\frac{w}{m}) \in L^1$ implies that $w=0$ a.e. in $\{m=0\}$. Therefore $\ds w(t,x)= -m(t,x)D_pH(x,D\phi(t,x))$ also holds a.e. in $\{m=0\}$.
\end{proof}

\begin{proof}[Proof of Theorem \ref{theo:main}] Let $(m,w)\in L^q((0,T)\times \T^d) \times L^1((0,T)\times \T^d,\R^d)$ be a solution of \eqref{Pb:mw2} and $(\phi,\alpha)\in {\mathcal K}$ be a solution of the relaxed problem \eqref{PB:dual-relaxed} given by Proposition \ref{Prop:existence}. Recall that $\ds mH^*(\cdot, -\frac{w}{m}) \in L^1$, that $\phi$ is continuous in $[0,T]\times \T^d$ and $\ds \alpha=\left( -\partial_t\phi^{ac} +H(x,D\phi)\right)\vee 0$ a.e.. From Lemma \ref{Lem:dualite} and Proposition \ref{prop:valeursegales}, we have 
$$
0=  \int_0^T\int_{\T^d} m H^*(x, -\frac{w}{m})+ F(x,m)+ F^*(x,\alpha) + \int_{\T^d} \phi_Tm(T)-\phi(0)m_0\;.
$$
Since $m\in L^q$ while $\alpha\in L^p$, we also have 
$$
\begin{array}{l}
\ds\int_0^T\int_{\T^d} m H^*(x, -\frac{w}{m})+ F(x,m)+ F^*(x,\alpha) + \int_{\T^d} \phi_Tm(T)-\phi(0)m_0\\
\qquad \qquad 
\ds\geq \; \int_0^T\int_{\T^d} \left( H^*(x, -\frac{w}{m})+ \alpha\right)m+ \int_{\T^d} \phi_Tm(T)-\phi(0)m_0\; \geq \; 0,
\end{array}
$$
where the last inequality comes from Lemma \ref{Lem: ineqq}.  Since equality holds in the above string of inequalities, one must have 
$$
F(x,m)+ F^*(x,\alpha) =m\alpha \qquad {\rm a.e.,}
$$
i.e., $\alpha(t,x)=f(x,m(t,x))$ a.e. and, from the second statement of Lemma \ref{Lem: ineqq}, $w(t,x)=-m(t,x)D_pH(x,D\phi(t,x))$ a.e. In particular $mD_pH(\cdot,D\phi)\in L^1$. Note that $\{\alpha>0\}=\{m>0\}$ and therefore  (ii) and (iii) in Definition \ref{def:weaksolMFG} hold. Using again that $\{\alpha>0\}=\{m>0\}$ and that $\ds \alpha=\left( -\partial_t\phi^{ac} +H(x,D\phi)\right)\vee 0$ a.e., we have
$$
\begin{array}{rl}
\ds  \left( H^*(x, -\frac{w}{m})+ \alpha\right)m\; = & \ds \left( H^*(x, D_pH(x,D\phi))-\partial_t\phi^{ac}+H(x,D\phi) \right)m\\
 = & \ds \left( -\partial_t\phi^{ac} + \lg D\phi, D_pH(x,D\phi)\rg  \right)m.
\end{array}
$$
As the left-hand side belongs to $L^1$, so does the right-hand side: this completes the proof of (i). Then equality 
$$
\int_0^T\int_{\T^d} \left( H^*(x, -\frac{w}{m})+ \alpha\right)m+ \int_{\T^d} \phi_Tm(T)-\phi(0)m_0\; = \; 0
$$
can be rewritten as 
$$
\int_0^T\int_{\T^d} \left(  \partial_t\phi^{ac} - \lg D\phi, D_pH(x,D\phi)\rg \right)m+ \int_{\T^d} \phi_Tm(T)-\phi(0)m_0\; = \; 0\;.
$$
So (iv) holds as well. In conclusion, the pair $(m,\phi)$ is a weak solution of \eqref{MFG}. 


Let us now assume that $(m,\phi)$ is a weak solution of  \eqref{MFG}. Let us set $w=-mD_pH(x,D\phi)$ and $\alpha=f(x,m)$. Then $(m,w)$ belongs to $\mathcal K_1$ and $(\phi,\alpha)\in \mathcal K$ by (i), (ii) and (iii) in Definition \ref{def:weaksolMFG}. 
We first prove that $(m,w)$ is optimal for \eqref{Pb:mw2}. Recall that $m\in L^q$  by  definition of a weak solution. In view of the growth condition \eqref{Hypf} we have therefore $f(\cdot,m(\cdot,\cdot))\in L^p$.  Let $(m',w')\in \mathcal K_1$ be another admissible pair. Without loss of generality we can assume that $m'H^*(x,-\frac{w'}{m'})\in L^1$ and $m'\in L^q$, because otherwise ${\mathcal B}(m',w')=+\infty$. Then, by convexity of $F$ with respect to the second variable, we have: 
$$
\begin{array}{rl}
\ds {\mathcal B}(m',w')\; =& \ds \int_0^T\int_{\T^d} m' H^*(x, -\frac{w'}{m'})+ F(x,m') + \int_{\T^d} \phi_T m'(T) \\
\geq &\ds \ds \int_0^T\int_{\T^d} m' H^*(x, -\frac{w'}{m'})+ F(x,m) +  f(x,m)(m'-m) + \int_{\T^d} \phi_T m'(T) 
 \end{array}
 $$
Next we use the definition of $\alpha$ and the fact that $(\phi,\alpha)\in \mathcal K$ to get: 
$$
\begin{array}{rl}
\ds {\mathcal B}(m',w') \;  \geq &  \ds \int_0^T\int_{\T^d} m' (\alpha+ H^*(x, -\frac{w'}{m'})) + F(x,m)-f(x,m) m + \int_{\T^d} \phi_T m'(T) \\
 \geq & \ds \int_0^T\int_{\T^d}  F(x,m)-f(x,m) m + \int_{\T^d} \phi(0) m_0 
 \end{array}
 $$
 where the last inequality comes from the first statement of Lemma \ref{Lem: ineqq}. Using conditions (ii) and (iv) in Definition \ref{def:weaksolMFG}, we have 
 $$
\begin{array}{rl}
\ds  - \int_0^T\int_{\T^d} f(x,m) m + \int_{\T^d} \phi(0) m_0  \; = & \ds \int_0^T \int_{\T^d} m\left(\partial_t\phi^{ac} -H(x,D\phi) \right) + \int_{\T^d} \phi(0) m_0\\
= & \ds   \int_0^T \int_{\T^d} m\left(\lg D\phi, D_pH(x,D\phi)\rg -H(x,D\phi) \right) + \int_{\T^d} \phi_T m(T)\\
= & \ds   \int_0^T mH^*(x,D_pH(x,D\phi)) + \int_{\T^d} \phi_T m(T)\\
= & \ds   \int_0^T m H^*(x,-\frac{w}{m})  + \int_{\T^d} \phi_T m(T)\\
 \end{array}
  $$
Therefore
$$
{\mathcal B}(m',w') \geq \int_0^T\int_{\T^d} mH^*(x, -\frac{w}{m})+ F(x,m) + \int_{\T^d} \phi_T m(T)= {\mathcal B}(m,w)\;,
 $$
 which proves the optimality of $(m,w)$.  \\
 
The arguments for proving the optimality of $(\phi,\alpha)$ are similar: we already know that  $(\phi,\alpha)$ belongs to ${\mathcal K}$. Let $(\phi',\alpha')\in {\mathcal K}$ be another admissible test function. From Proposition \ref{prop:valeursegales} we can assume without loss of generality that $\phi'$ is of class ${\mathcal C}^1$ and $\alpha'= -\partial_t\phi'+H(x,D\phi')$. Then, since $m\in \partial_\alpha   F^*(x,\alpha)$ because $\alpha = f(x,m)$, we have
$$
\ds \mathcal A(\phi',\alpha')=  \int_0^T\int_{\T^d}  F^*(x,\alpha') - \int_{\T^d} \phi'(0)m_0 \; \geq \;
\int_0^T\int_{\T^d}  F^*(x,\alpha) + m(\alpha'-\alpha) - \int_{\T^d} \phi'(0)m_0.
$$
From the first statement of Lemma \ref{Lem: ineqq}, we have  
$$
\int_0^T\int_{\T^d}  m\alpha' - \int_{\T^d} \phi'(0)m_0 \geq 
- \int_0^T \int_{\T^d}  m H^*(x, -\frac{w}{m}) - \int_{\T^d} \phi_Tm(T) .
$$
So 
$$
\ds \mathcal A(\phi',\alpha') \; \geq \;
\int_0^T\int_{\T^d}  F^*(x,\alpha) -m H^*(x, -\frac{w}{m})-m \alpha- \int_{\T^d} \phi_Tm(T)  .
$$
Using the definition of $w$ and $\alpha$ and condition (iv) in Definition \ref{def:weaksolMFG}, we have
$$
\begin{array}{rl}
\ds \int_0^T\int_{\T^d}  m H^*(x, -\frac{w}{m})+ m \alpha \; = & \ds \int_0^T\int_{\T^d}  m \left(H^*(x, -\frac{w}{m})-\partial_t\phi^{ac} +H(x,D\phi)\right)\\
 = &  \ds \int_0^T\int_{\T^d}  m \left( \lg D\phi, D_pH(x,D\phi)\rg -\partial_t\phi^{ac} \right)\\
= & \ds - \int_{\T^d} m(T)\phi_T-m_0\phi(0) 
 \end{array}
$$
Therefore 
$$
{\mathcal A}(\phi',\alpha') \; \geq \;\int_0^T\int_{\T^d}  F^*(x,\alpha) - \int_{\T^d} \phi(0)m_0= {\mathcal A}(\phi,\alpha) ,
$$
which proves the optimality of $(\phi,\alpha)$. 
\end{proof}

\subsection{Uniqueness of the weak solution}\label{subsec:uniq}


\begin{Theorem}\label{theo:unique} Let $(m,\phi)$ and $(m',\phi')$ be two weak solutions of \eqref{MFG}. Then $m=m'$ in $[0,T]\times \T^d$ while $\phi=\phi'$ in $\{m>0\}$. 
%
\end{Theorem}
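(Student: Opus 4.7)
The plan is to reduce the uniqueness question to the uniqueness of the minimizer of the dual problem \eqref{Pb:mw2} (Lemma \ref{Lem:dualite}) and then to use the maximal-subsolution characterization of Lemma \ref{lem:maxsubsol} to pin down $\phi$ on $\{m>0\}$.

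By Theorem \ref{theo:main}, to each weak solution $(m,\phi)$ one associates the pair $(m,-mD_pH(\cdot,D\phi))\in\mathcal K_1$, which minimizes \eqref{Pb:mw2}. Lemma \ref{Lem:dualite} asserts that this minimizer is unique, so applying the correspondence to both $(m,\phi)$ and $(m',\phi')$ yields $m=m'$ a.e.\ and $m\,D_pH(\cdot,D\phi)=m\,D_pH(\cdot,D\phi')$ a.e. Since $H^*(x,\cdot)$ is strictly convex (as used in the uniqueness argument of Lemma \ref{Lem:dualite}), the map $p\mapsto D_pH(x,p)$ is injective, so $D\phi=D\phi'$ a.e.\ on $\{m>0\}$. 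Plugging these equalities into the a.e.\ identities $-\partial_t\phi^{ac}+H(x,D\phi)=f(x,m)$ and $-\partial_t(\phi')^{ac}+H(x,D\phi')=f(x,m)$ from Definition \ref{def:weaksolMFG}(ii) also gives $\partial_t\phi^{ac}=\partial_t(\phi')^{ac}$ a.e.\ on $\{m>0\}$.

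To upgrade these local derivative identities into pointwise equality of values, set $\alpha:=f(\cdot,m)=f(\cdot,m')\in L^p$ and consider the maximal BV-subsolution $\bar\phi$ of $-\partial_t\Phi+H(x,D\Phi)\leq\alpha$ with $\Phi(T)=\phi_T$ furnished by Lemma \ref{lem:maxsubsol}. Both $\phi$ and $\phi'$ are such subsolutions, so $\bar\phi\geq\phi$ and $\bar\phi\geq\phi'$ a.e. As $(\bar\phi,\alpha)\in\mathcal K$ is admissible for \eqref{PB:dual-relaxed} and $(\phi,\alpha)$ is optimal (Theorem \ref{theo:main}), the inequality $\mathcal A(\phi,\alpha)\leq\mathcal A(\bar\phi,\alpha)$ reduces to $\int_{\T^d}m_0\bar\phi(0)\leq\int_{\T^d}m_0\phi(0)$; combined with $\bar\phi(0)\geq\phi(0)$ a.e., this forces $\bar\phi(0,\cdot)=\phi(0,\cdot)$ on $\{m_0>0\}$. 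The same argument applied to $(\phi',\alpha)$ gives $\bar\phi(0,\cdot)=\phi'(0,\cdot)$ on $\{m_0>0\}$, hence $\phi(0,\cdot)=\phi'(0,\cdot)$ on $\{m_0>0\}$. Iterating the argument on subintervals $[t_0,T]$, with $m(t_0)$ playing the role of the initial datum (legitimate since $t\mapsto m(t)\in P(\T^d)$ is continuous by the continuity equation), then propagates the identity to $\phi(t_0,\cdot)=\phi'(t_0,\cdot)$ on $\{m(t_0,\cdot)>0\}$ for every $t_0\in(0,T)$, giving the desired conclusion.

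The main obstacle is this final propagation step. The restriction to $[t_0,T]$ requires the weak-solution theory, and in particular Lemma \ref{lem:maxsubsol}, to accommodate general probability-measure initial data rather than the continuous density of (H4), which is not entirely obvious from the present setup. A cleaner alternative is to invoke Ambrosio's superposition principle for the continuity equation driven by the common velocity field $-D_pH(\cdot,D\phi)$: this decomposes $m$ as a time-marginal of a measure concentrated on absolutely continuous trajectories emanating from $\{m_0>0\}$, and along each such trajectory the BV identifications $D\phi=D\phi'$ and $\partial_t\phi^{ac}=\partial_t(\phi')^{ac}$ (together with the vanishing of the singular part of $\partial_t(\phi-\phi')$ on $\{m>0\}$, which condition (iv) of Definition \ref{def:weaksolMFG} is designed to encode) force $\phi$ and $\phi'$ to obey the same absolutely continuous ODE with the same starting value, yielding equality throughout $\{m>0\}$.
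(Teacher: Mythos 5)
Your treatment of $m$ and of the time $t=0$ matches the paper's: uniqueness of the minimizer of \eqref{Pb:mw2} gives $m=m'$, and comparing $\mathcal A(\phi,\alpha)$ with $\mathcal A(\bar\phi,\alpha)$ for the maximal subsolution $\bar\phi$ of Lemma \ref{lem:maxsubsol} gives $\phi(0,\cdot)=\bar\phi(0,\cdot)=\phi'(0,\cdot)$ on $\{m_0>0\}$. The genuine gap is the propagation to interior times, and you have correctly located it yourself. The iteration on $[t_0,T]$ is not available: the variational apparatus (Lemma \ref{Lem:dualite}, Proposition \ref{prop:valeursegales}, Theorem \ref{theo:main}) is built for an initial density satisfying (H4), whereas $m(t_0)$ is merely an $L^q$ function, and it is not even clear that the restriction of $(m,\phi)$ to $[t_0,T]$ is a weak solution there, since condition (iv) of Definition \ref{def:weaksolMFG} is a single global identity on $[0,T]$ that does not localize in time. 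Your fallback via the superposition principle also does not close: $\phi$ is only BV and continuous, so its restriction to an individual trajectory $\gamma$ need not be absolutely continuous, and the pointwise chain rule $\frac{d}{ds}\phi(s,\gamma(s))=\partial_t\phi^{ac}+\langle D\phi,\dot\gamma\rangle$ along a single curve (a Lebesgue-null subset of $(0,T)\times\T^d$) has no meaning; condition (iv) controls the singular part of $\partial_t\phi$ against $m$ only in an integrated sense, not trajectory by trajectory.

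The missing idea is that the paper's mechanism is entirely integrated. It builds a specific $\eta\in\widetilde M(\Gamma)$ as a limit of the flows of the mollified field $w^\ep/m^\ep$, with $e_t\sharp\eta=m(t)$; it proves, for every $\eta'\in\widetilde M(\Gamma)$ and every minimizer, the inequality of Lemma \ref{lem:t1t2} on any $[t_1,t_2]$ (by mollifying $\phi$, never differentiating it along a single curve); and it shows in Lemma \ref{lem:equalpsi}, using condition (iv) and the value of the dual problem, that for this $\eta$ the inequality is an equality over the whole of $[0,T]$. Splitting $[0,T]=[0,t]\cup[t,T]$, the sum of the two inequalities is an equality, hence each is an equality; since the right-hand side of the inequality on $[t,T]$ does not depend on which minimizer is used, one gets $\int_{\T^d}\phi(t)m(t)=\int_{\T^d}\bar\phi(t)m(t)$, and $\phi\leq\bar\phi$ then forces $\phi(t,\cdot)=\bar\phi(t,\cdot)$ on $\{m(t,\cdot)>0\}$. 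This additivity-in-time argument is what replaces both your restriction to subintervals and your trajectory-wise ODE; without it (or a substitute of comparable strength) the proof is incomplete.
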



The proof of Theorem \ref{theo:unique} requires several steps and relies on a representation of solutions in terms of measures over family of curves. 

Let $(m,\phi)$ be a solution to \eqref{MFG}. In view of Theorem \ref{theo:main},  the pair $(m,-mD_pH(\cdot,D\phi))$ is the minimizer of \eqref{Pb:mw2} while $(\phi, f(\cdot,m))$ is a solution of \eqref{PB:dual-relaxed}. In particular,  $m$ is unique because of the uniqueness of the solution of \eqref{Pb:mw2}.

Let now $\alpha= f(\cdot, m)$ and $\bar \phi$ be the maximal subsolution of \eqref{subHJ} associated with $\alpha$. Note that $\bar \phi$ is defined independently of $\phi$. So, in order to show Theorem \ref{theo:unique}, we just need to prove that $\phi$ coincides with $\bar \phi$ in the set $\{m>0\}$. In view of Lemma \ref{lem:maxsubsol}, we have $\bar \phi\geq \phi$ a.e.. Therefore, the pair $(\bar \phi, \alpha)$ is also a minimizer to \eqref{PB:dual-relaxed}. Note that this implies that 
\be\label{phi0=psi0mae}
\phi(0,\cdot)=\bar \phi(0, \cdot) \qquad \mbox{\rm a.e. in }\{m_0>0\}\;.
\ee
Note also that, according to Theorem \ref{theo:main},  the pair $(\bar \phi,\alpha)$ is also a solution of \eqref{MFG}. \\


%
%

Let $\Gamma$ be the set of continuous curves $\gamma:[0,T]\to \T^d$ endowed with the topology of uniform convergence. We consider the set $M(\Gamma)$  of Borel probability measures on $\Gamma$. For any $t\in [0,T]$, we denote by $e_t:\Gamma\to \T^d$ the evaluation map: $e_t(\gamma)=\gamma(t)$. We are particularly interested in the subset $\widetilde M(\Gamma)$ of measures $\eta'\in M(\Gamma)$  such that 
$$
\int_{\Gamma} \int_0^T \left|\dot \gamma(s)\right|^{r'} \ dsd \eta'(\gamma)<+\infty
$$
and such that $m'(t):= e_t\sharp \eta$ is absolutely continuous for any $t\in [0,T]$ (the density being also denoted by $m'(t,\cdot)$), with $m'\in  L^q((0,T)\times\T^d)$. 

Throughout the section, it will be convenient to  denote by $L$ the convex conjugate of the map $p\to H(x,-p)$, i.e., $L(x,\xi)=H^*(x,-\xi)$. Recall that $(m,\phi)$ is a solution to \eqref{MFG} and that $\alpha= f(\cdot, m)$. 

\begin{Lemma}\label{lem:t1t2} Let $\eta'\in \widetilde M(\Gamma)$ and set $m'(t)=e_t\sharp \eta'$. We have, for any $0\leq t_1< t_2\leq T$, 
\be\label{ineq:lem:t1t2}
\begin{array}{rl}
\ds \int_{\T^d} \phi(t_1,x) m'(t_1,x)\ dx \; \leq & \ds \int_{\T^d} \phi(t_2,x) m'(t_2,x)\ dx 
+ \int_\Gamma\int_{t_1}^{t_2} L(\gamma(s),\dot\gamma(s)) \ dsd\eta'(\gamma) \\
& \ds \qquad \qquad +\int_{\T^d}\int_{t_1}^{t_2} \alpha(s, x) m'(s,x)\ dsdx .
\end{array}
\ee
\end{Lemma}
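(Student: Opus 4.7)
The plan is to reduce the inequality to a pointwise one-dimensional duality inequality along the curves supporting $\eta'$, and then integrate against $\eta'$, using the identification $m'(t)=e_t\sharp\eta'$ to convert curve integrals into space integrals against $m'$. If $\phi$ were $C^1$, then for an absolutely continuous curve $\gamma$ the Fenchel--Young inequality $H(x,p)+L(x,\dot\gamma)\geq -\langle p,\dot\gamma\rangle$ combined with the subsolution inequality $-\partial_t\phi+H(x,D\phi)\leq\alpha$ yields
$$
\frac{d}{ds}\phi(s,\gamma(s))=\partial_t\phi+\langle D\phi,\dot\gamma\rangle \;\geq\; H(\gamma,D\phi)+\langle D\phi,\dot\gamma\rangle-\alpha \;\geq\; -L(\gamma,\dot\gamma)-\alpha(s,\gamma(s)).
$$
Integrating on $[t_1,t_2]$ gives the pointwise inequality
$$
\phi(t_1,\gamma(t_1))\leq \phi(t_2,\gamma(t_2)) + \int_{t_1}^{t_2}\bigl[L(\gamma(s),\dot\gamma(s))+\alpha(s,\gamma(s))\bigr]\,ds.
$$
Integrating this against $\eta'$ and using $m'(t)=e_t\sharp\eta'$ to rewrite $\int_\Gamma\phi(t_i,\gamma(t_i))\,d\eta'=\int_{\T^d}\phi(t_i,x)m'(t_i,x)\,dx$ and analogously for $\alpha$ gives exactly \eqref{ineq:lem:t1t2}. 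Integrability of the $L$--term and of the $\alpha$--term along $\eta'$ follows, respectively, from the definition of $\widetilde M(\Gamma)$ together with the growth \eqref{HypHstar}, and from Hölder's inequality with $\alpha\in L^p$ and $m'\in L^q$.

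\textbf{Regularization.} Since $\phi$ is only BV (and continuous), I would regularize exactly as in the proof of Proposition~\ref{prop:valeursegales}: after extending $(\phi,\alpha)$ slightly beyond $[0,T]$ near $t=T$ via $\phi_T(x)+\lambda(T-t)$, I convolve with the kernel $\xi_\epsilon$ to produce $(\phi_\epsilon,\alpha_\epsilon)$ with $\phi_\epsilon\in C^1$ and, in view of \eqref{phiepalphaep},
$$
-\partial_t\phi_\epsilon+H(x,D\phi_\epsilon)\leq \alpha_\epsilon,\qquad \alpha_\epsilon:=\xi_\epsilon\star\tilde\alpha+C\epsilon^{1-(d+1)\theta/r}(1+\|D\phi\|_r^\theta).
$$
Because $\phi$ is continuous on $[0,T]\times\T^d$, $\phi_\epsilon\to\phi$ uniformly on compact subsets, and $\alpha_\epsilon\to\alpha$ in $L^p$. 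For $\eta'$-a.e.\ curve $\gamma$ one has $\dot\gamma\in L^{r'}([0,T])$, so the smooth pointwise argument of the first paragraph applies to $(\phi_\epsilon,\alpha_\epsilon)$ along such $\gamma$; integrating against $\eta'$ and applying Fubini (justified because $\phi_\epsilon,\alpha_\epsilon$ are continuous) yields \eqref{ineq:lem:t1t2} with $(\phi_\epsilon,\alpha_\epsilon)$ in place of $(\phi,\alpha)$.

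\textbf{Passing to the limit and main obstacle.} Letting $\epsilon\to 0$: the boundary terms $\int_{\T^d}\phi_\epsilon(t_i,x)m'(t_i,x)\,dx$ converge to $\int_{\T^d}\phi(t_i,x)m'(t_i,x)\,dx$ by uniform convergence of $\phi_\epsilon$ together with $m'(t_i)\in L^q$; the $\alpha_\epsilon$--term converges by Hölder's inequality, using $\alpha_\epsilon\to\alpha$ in $L^p$ and $m'\in L^q$; the $L$--term is independent of $\epsilon$. The main technical obstacle is the boundary case $t_1=0$ (and to a lesser extent $t_2=T$), where the convolution $\phi_\epsilon$ is not defined in a full $\epsilon$--neighborhood of the endpoint. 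Near $T$ this is handled by the terminal extension as in Proposition~\ref{prop:valeursegales}; at $t_1=0$ I would first prove the inequality on $[\delta,t_2]$ and then let $\delta\to 0$, using the continuity of $\phi$ on $[0,T]\times\T^d$ together with the weak-$*$ continuity of $t\mapsto m'(t)=e_t\sharp\eta'$ (which follows from $\int_\Gamma\int_0^T|\dot\gamma|^{r'}\,ds\,d\eta'<\infty$, forcing a uniform modulus of continuity on $t\mapsto e_t\sharp\eta'$ in $P(\T^d)$). These boundary manipulations closely parallel those already carried out in the proofs of Proposition~\ref{prop:valeursegales} and Lemma~\ref{Lem: ineqq}.
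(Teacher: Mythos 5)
Your proposal is correct and follows essentially the same route as the paper: regularize $(\phi,\alpha)$ by convolution as in Proposition \ref{prop:valeursegales}, apply the Fenchel--Young inequality along $W^{1,r'}$ curves to get the pointwise inequality, integrate against $\eta'$, and pass to the limit using uniform convergence of $\phi_\epsilon$, $L^p$ convergence of $\alpha_\epsilon$ and $m'\in L^q$. The paper dispatches the endpoint cases simply by noting that continuity of $\phi$ allows one to assume $0<t_1<t_2<T$; your more detailed treatment of those cases is consistent with that.
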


\begin{proof} As $\phi$ is continuous, we can assume that $0<t_1<t_2<T$. We regularize $\phi$ and $\alpha$ into $\phi^\ep$ and $\alpha^\ep$ as in the proof of Proposition \ref{prop:valeursegales}. Since, for $\ep$ small enough,  inequality $-\partial_t \phi^\ep+H(x,D\phi^\ep))\leq \alpha^\ep$ holds in $(t_1, t_2)\times \T^d$, we have for any $\gamma \in W^{1, r'}([0,T])$:
$$
\begin{array}{l}
\ds \frac{d}{dt}\left[ \phi^\ep(s,\gamma(s))-\int_s^T L(\gamma(\tau), \dot \gamma(\tau))d\tau\right] \\ 
\qquad \qquad = \; \ds \partial_t \phi^\ep(s,\gamma(s))+ \lg D\phi^\ep(s,\gamma(s)), 
\dot \gamma(s)\rg +L(\gamma(s), \dot \gamma(s))\\ 
\qquad \qquad  \geq \;  \ds  \partial_t \phi^\ep(s,\gamma(s))- H(\gamma(s), D\phi^\ep(s,\gamma(s))) \;
\geq \;  \ds -\alpha^\ep(s,\gamma(s))
\end{array}
$$
We integrate first between $t_1$ and $t_2$ and then over $\eta'$ to get
$$
\begin{array}{rl}
\ds \int_{\T^d}\phi^\ep(t_1,x)m'(t_1,x)dx \; \leq &
\ds \int_{\T^d}\phi^\ep(t_2,x)m'(t_2,x)dx + 
\int_{\Gamma}\int_{t_1}^{t_2} L(\gamma(s), \dot \gamma(s))\ ds d\eta(\gamma)\\
& \ds \qquad \qquad +\int_{\T^d}\int_{t_1}^{t_2} \alpha^\ep(s,x)m'(s,x)\  dsdx 
\end{array}
$$
Letting $\ep\to 0$ we obtain the desired inequality, since  $\phi^\ep$ converges to $\phi$ uniformly and $\alpha^\ep$ converges to $\alpha$ in $L^{p}$, with $m'\in  L^q((0,T)\times\T^d)$
\end{proof}

We now build a specific measure $\eta$ for which equality holds in \eqref{ineq:lem:t1t2}.  Let us set, as usual, $w= -mD_pH(\cdot, D\bar \phi)$. Recall that $(m,w)$ is a solution of \eqref{Pb:mw2}. Let $\xi$ be a standard convolution kernel in $\R^d$ such that $\xi>0$ in $\R^d$ and let $m^\ep=\xi_\ep\star m$, $w^\ep= \xi^\ep\star w$. We note  that $m^\ep>0$. For $x\in \T^d$, let $X^\ep_x$ be the solution to the Cauchy problem
$$
\left\{ \begin{array}{l}
\ds \dot x(t)=  \frac{w^\ep(t,x(t))}{m^\ep(t,x(t))}  \qquad \mbox{\rm a.e. in}\;  [0,T]\\
x(0)=x
\end{array}\right.
$$
We define $\eta^\ep\in M(\Gamma)$ by 
$$
\int_\Gamma \Theta(\gamma) d\eta^\ep(\gamma)= \int_{\T^d} \Theta(X^\ep_x)m_0^\ep(x)dx
$$
for any bounded, continuous map $\Theta:\Gamma\to \R$. One easily checks that $m^\ep(t)= e_t\star \eta^\ep= X^\ep_\cdot (t)\sharp m_0^\ep$ (i.e., $m^\ep(t)$ is the push forward of the measure $m_0$ by the map $x\to X^\ep_x (t)$. In particular, $\eta^\ep\in \widetilde M(\Gamma)$.

\begin{Lemma} The family $(\eta^\ep)$ is tight.
\end{Lemma}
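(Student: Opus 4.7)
The plan is to reduce tightness on $\Gamma=C([0,T],\T^d)$ to a uniform $W^{1,r'}$-energy bound via Arzel\`a--Ascoli, and to obtain that bound by combining a change of variables with a convexity/Jensen argument that compares the mollified $(w^\ep,m^\ep)$ to the original $(w,m)$.

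First I would compute, using the push-forward identity $m^\ep(s)=X^\ep_\cdot(s)\sharp m_0^\ep$ and the ODE defining $X^\ep_x$, the $r'$-energy under $\eta^\ep$:
$$
\int_\Gamma\int_0^T|\dot\gamma(s)|^{r'}\,ds\,d\eta^\ep(\gamma)
=\int_{\T^d}\int_0^T\left|\tfrac{w^\ep(s,X^\ep_x(s))}{m^\ep(s,X^\ep_x(s))}\right|^{r'}ds\,m_0^\ep(x)\,dx
=\int_0^T\int_{\T^d}\frac{|w^\ep|^{r'}}{(m^\ep)^{r'-1}}\,dy\,ds.
$$
The key point is that the function $\Phi(w,m)=|w|^{r'}/m^{r'-1}$ (with $\Phi(0,0)=0$ and $\Phi(w,0)=+\infty$ for $w\neq0$) is convex and positively $1$-homogeneous on $\R^d\times[0,+\infty)$. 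By Jensen's inequality for the convolution against the probability density $\xi_\ep$, this gives $\Phi(w^\ep,m^\ep)\leq \xi_\ep\star\Phi(w,m)$ pointwise, hence
$$
\int_\Gamma\int_0^T|\dot\gamma|^{r'}\,ds\,d\eta^\ep(\gamma)
\;\leq\;\int_0^T\int_{\T^d}\frac{|w|^{r'}}{m^{r'-1}}\,dy\,ds,
$$
and the right-hand side is finite and $\ep$-independent, since the coercivity of $H^*$ in \eqref{HypHstar} yields $m H^*(x,-w/m)\geq \tfrac{1}{r'\Cr{C0H}}|w|^{r'}/m^{r'-1}-\Cr{C0H}\,m$, and $(m,w)$ is an optimizer of \eqref{Pb:mw2}.

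Next, for each $M>0$ I would introduce
$$
K_M=\Bigl\{\gamma\in AC([0,T],\T^d)\ :\ \int_0^T|\dot\gamma(s)|^{r'}\,ds\leq M\Bigr\}.
$$
H\"older's inequality gives $|\gamma(t_2)-\gamma(t_1)|\leq M^{1/r'}(t_2-t_1)^{1/r}$ for every $\gamma\in K_M$, so $K_M$ is equicontinuous; as $\T^d$ is compact and the functional $\gamma\mapsto\int_0^T|\dot\gamma|^{r'}$ is lower semicontinuous for uniform convergence (extended by $+\infty$ off $W^{1,r'}$), $K_M$ is compact in $\Gamma$. Markov's inequality and the uniform bound of the first step yield
$$
\eta^\ep(\Gamma\setminus K_M)\;\leq\;\tfrac{1}{M}\int_\Gamma\int_0^T|\dot\gamma|^{r'}\,ds\,d\eta^\ep\;\leq\;\tfrac{C}{M},
$$
so that sending $M\to\infty$ provides tightness.

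The main obstacle is the convexity/Jensen step: one must verify that $\Phi(w,m)=|w|^{r'}/m^{r'-1}$ is jointly convex with the right convention at $m=0$, so that mollifying $(w,m)$ does not inflate the $r'$-energy. Once this is secured, the rest is essentially Arzel\`a--Ascoli combined with the coercivity of $H^*$ already used throughout the paper.
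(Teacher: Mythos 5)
Your proof is correct and follows essentially the same strategy as the paper's: push the curve energy forward to a space--time integral, use joint convexity of the Benamou--Brenier-type integrand under mollification, bound it via the coercivity of $H^*$ and the optimality of $(m,w)$, and conclude tightness from the compactness of the sublevel sets of a coercive, lower semicontinuous functional on $\Gamma$. The only (harmless) difference is that you work with the $x$-independent kinetic energy $|w|^{r'}/m^{r'-1}$, which gives a clean $\ep$-uniform Jensen inequality, whereas the paper applies the same convexity argument directly to $(m,w)\mapsto mH^*(x,-w/m)$ and settles for a $\limsup$ bound (which it reuses later in Lemma \ref{lem:equalpsi}).
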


\begin{proof} Let us $\Phi:\Gamma\to \R\cup\{+\infty\}$ be defined by  
\be\label{defPhi}\ds 
 \Phi(\gamma)= \left\{\begin{array}{ll}
 \ds  \int_0^T L(\gamma(t),\dot \gamma(t))dt & {\rm if }\; \gamma\in W^{1,r'}([0,T]\\
 \ds +\infty & {\rm otherwise}
 \end{array}\right. 
 \ee
 Then $\Phi$ is  lower semicontinuous, convex and coercive thanks to assumption \eqref{HypHstar}. 
 We have, by definition of $\eta^\ep$, 
 \be\label{Phi1}
\begin{array}{rl}
\ds\int_{\Gamma}\Phi(\gamma) d\eta^\ep(\gamma)\; = &  
\ds \int_{\T^d}\int_0^T L\left(X^\ep_x(t),\frac{w^\ep(t,X^\ep_x(t))}{m^\ep(t,X^\ep_x(t))}\right)m_0^\ep(x)\ dtdx\\
= & \ds \int_0^T\int_{\T^d} H^*\left(x,-\frac{w^\ep(t,x)}{m^\ep(t,x)}\right) m^\ep(t,x)\ dx dt
\end{array}
\ee
Note that, by convexity of the map $(m,w)\to H^*(x,  -\frac{w}{m})m$, 
\be\label{Phi2}
\ds \limsup_{\ep\to 0} \int_0^T\int_{\T^d} H^*\left(x,-\frac{w^\ep(t,x)}{m^\ep(t,x)}\right) m^\ep(t,x)dx dt
\leq  \int_0^T\int_{\T^d} H^*\left(x,-\frac{w(t,x)}{m(t,x)}\right) m(t,x)dt.
\ee
Since the right-hand side of the above inequality is finite, $\ds \int_{\Gamma}\Phi(\gamma)d\eta^\ep(\gamma)$ is uniformly bounded. As $\Phi$ has compact level-set in $\Gamma$, this implies that $\eta^\ep$ is tight. 
\end{proof}

Let $\eta$ be a limit of a subsequence of the $(\eta^\ep)$. Recall that $\alpha= f(\cdot, m)$ and that $\bar \phi$ be the maximal subsolution of \eqref{subHJ} associated with $\alpha$. 

\begin{Lemma}\label{lem:equalpsi} We have $m(t)= e_t\sharp \eta$ for any $t\in [0,T]$ and
\be\label{equalpsi}
\begin{array}{rl}
\ds \int_{\T^d} \bar \phi(0,x)m(0,x) dx \; = & \ds \int_{\T^d} \bar \phi_T(x)m(T,x)+ \int_\Gamma\int_0^T L(\gamma(s),\dot\gamma(s)) dsd\eta(\gamma) \\
& \ds \qquad \qquad +\int_{\T^d}\int_0^T \alpha(s, x) m(s,x)dsdx 
\end{array}
\ee
\end{Lemma}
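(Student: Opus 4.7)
The plan is to combine three ingredients: narrow convergence of $(\eta^\epsilon)$ to $\eta$, the one-sided inequality of Lemma~\ref{lem:t1t2} applied to $(\bar\phi,\alpha)$, and the global duality identity $\mathcal A(\bar\phi,\alpha)+\mathcal B(m,w)=0$ provided by Lemma~\ref{Lem:dualite} together with Theorem~\ref{theo:main}. The strategy is to sandwich the ``action'' $\int_\Gamma\int_0^T L(\gamma,\dot\gamma)\,ds\,d\eta$ between two quantities that turn out to be equal.

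First I would identify $m(t)=e_t\sharp\eta$ for every $t\in[0,T]$. By the construction, $e_t\sharp\eta^\epsilon=m^\epsilon(t)=\xi_\epsilon\star m(t)$. Since $m$ is (a representative of) a continuous curve in $P(\T^d)$ for the weak-$*$ topology (consequence of the continuity equation with $v\in L^{r'}(m\,dxdt)$, as recalled in the proof of Lemma~\ref{Lem: ineqq}), $m^\epsilon(t)\rightharpoonup m(t)$ weakly for every $t$; on the other hand, narrow convergence of $\eta^{\epsilon_k}\to\eta$ together with the continuity of the evaluation map $e_t$ gives $e_t\sharp\eta^{\epsilon_k}\rightharpoonup e_t\sharp\eta$, hence $e_t\sharp\eta=m(t)$. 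Combining the uniform bound $\int_\Gamma\Phi\,d\eta^\epsilon\leq C$ from \eqref{Phi1}--\eqref{Phi2} with the lower semicontinuity and coercivity of $\Phi$ (see \eqref{defPhi}), I obtain
\begin{equation}\label{eq:plan:lsc}
\int_\Gamma\int_0^T L(\gamma(s),\dot\gamma(s))\,ds\,d\eta(\gamma)\;\leq\;\int_0^T\int_{\T^d} m\,H^*\!\left(x,-\tfrac{w}{m}\right)dxdt,
\end{equation}
and in particular $\eta\in\widetilde M(\Gamma)$, which is what allows me to apply Lemma~\ref{lem:t1t2} below.

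Next, since $(m,\bar\phi)$ is also a weak solution of \eqref{MFG} (as noted just before the statement), Theorem~\ref{theo:main} says $(m,w)$ minimizes \eqref{Pb:mw2} and $(\bar\phi,\alpha)$ minimizes \eqref{PB:dual-relaxed}; by Lemma~\ref{Lem:dualite}, $\mathcal A(\bar\phi,\alpha)+\mathcal B(m,w)=0$. The Fenchel identity $F^*(x,\alpha)+F(x,m)=m\alpha$ (using $\alpha=f(\cdot,m)$) then collapses this duality relation to
\begin{equation}\label{eq:plan:dual}
\int_{\T^d}\bar\phi(0,x)m_0(x)\,dx \;=\; \int_{\T^d}\phi_T(x)m(T,x)\,dx+\int_0^T\!\!\int_{\T^d} m\,H^*\!\left(x,-\tfrac{w}{m}\right)dxdt+\int_0^T\!\!\int_{\T^d}\alpha m\,dxdt.
\end{equation}
On the other hand, Lemma~\ref{lem:t1t2} applied to the continuous subsolution $\bar\phi$ with $\eta'=\eta$, $t_1=0$, $t_2=T$, and using $m(0)=m_0$, $m(t)=e_t\sharp\eta$, gives the reverse inequality
\begin{equation}\label{eq:plan:one-sided}
\int_{\T^d}\bar\phi(0,x)m_0(x)\,dx \;\leq\; \int_{\T^d}\phi_T(x)m(T,x)\,dx+\int_\Gamma\!\!\int_0^T L(\gamma,\dot\gamma)\,ds\,d\eta+\int_0^T\!\!\int_{\T^d}\alpha m\,dxdt.
\end{equation}

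Finally, subtracting common terms in \eqref{eq:plan:dual} and \eqref{eq:plan:one-sided} yields $\int_0^T\!\int m H^*(x,-w/m)\,dxdt\leq \int_\Gamma\!\int_0^T L\,ds\,d\eta$, which is the opposite of \eqref{eq:plan:lsc}. Therefore all three quantities coincide and \eqref{eq:plan:one-sided} is an equality, which is precisely \eqref{equalpsi}. The main obstacle I expect is to verify cleanly that the limit measure $\eta$ lies in $\widetilde M(\Gamma)$ so that Lemma~\ref{lem:t1t2} applies; this is handled by the lower-semicontinuity step \eqref{eq:plan:lsc}, which transfers the $L^{r'}$-action bound from $\eta^\epsilon$ to $\eta$ and, combined with the pushforward identification, gives the required $L^q$-absolute continuity of $t\mapsto e_t\sharp\eta$.
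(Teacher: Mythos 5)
Your proposal is correct and follows essentially the same route as the paper: identify $e_t\sharp\eta=m(t)$ via narrow convergence, bound $\int_\Gamma\Phi\,d\eta$ from above by $\int\!\!\int mH^*(x,-w/m)$ using \eqref{Phi1}--\eqref{Phi2} and lower semicontinuity, and close the sandwich with Lemma \ref{lem:t1t2} applied to the continuous pair $(\bar\phi,\alpha)$. The only (harmless) variation is that you obtain the energy identity \eqref{eq:plan:dual} from the zero duality gap $\mathcal A(\bar\phi,\alpha)+\mathcal B(m,w)=0$ plus the Fenchel equality $F+F^*=m\alpha$, whereas the paper reads it off directly from condition (iv) of Definition \ref{def:weaksolMFG} combined with \eqref{eq:ae}; the two derivations are equivalent.
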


\begin{Remark}{\rm Since $e_0\sharp \eta=m_0$, by desintegration there exists a Borel measurable family of probabilities $(\eta_x)_{x\in \T^d}$ on $\Gamma$ such that $\ds \eta(d\gamma)= \int_{\T^d}\eta_x(d\gamma) m_0(dx)$ and, for $m_0-$a.e. $x\in \T^d$, $\eta_x-$almost any trajectory $\gamma$ starts at $x$. Heuristically, combination of Lemma \ref{lem:t1t2} and Lemma \ref{lem:equalpsi} says that the measure $\eta_x$ is supported by optimal trajectories for the optimal control problem
$$
\inf_{\gamma(0)=x} \int_0^T \left(L(\gamma(s),\dot \gamma(s))+ \alpha(s,\gamma(s))\right)ds + g(\gamma(T))
$$
and that $\phi$ is the value function associated with this problem. 
Of course this statement is meaningless because the map $\alpha$ is not regular enough to define the above quantity. 
}\end{Remark}

\begin{proof}[Proof of Lemma \ref{lem:equalpsi}.] We first check equality $m(t)= e_t\sharp \eta$ for any $t\in [0,T]$. Let $h\in C^0([0,T]\times \T^d)$. Then 
$$
\begin{array}{rl}
\ds \int_0^T\int_{\T^d} h(t,x)d(e_t\sharp\eta)(x)dt\; =  & \ds \int_\Gamma\int_0^T h(t,\gamma(t))dt d\eta(\gamma)
= \lim_{\ep\to 0} \int_\Gamma\int_0^T h(t,\gamma(t))dt d\eta^\ep(\gamma) \\
= & \ds \lim_{\ep\to 0} \int_{\T^d}\int_0^T h(t,x)m^\ep(t,x) dtdx = \int_{\T^d}\int_0^T h(t,x)m(t,x) dtdx 
\end{array}
$$
This proves the equality  $m(t)= e_t\sharp \eta$ for a.e. $t\in [0,T]$, and therefore for any $t$ by continuity of $m$ and $e_t\sharp \eta$ in $P(\T^d)$. 

Next we show \eqref{equalpsi}. Recall that $(m,\bar \phi)$ is a weak solution of \eqref{MFG}. 
In view of \eqref{eq:ae}, equality \eqref{defcondsup} can be rewritten as 
$$
\int_0^T\int_{\T^d} m\left(H(x,D\bar \phi)-\alpha-\lg D\bar \phi, D_pH(x,D\bar \phi)\rg \right)= \int_{\T^d} m(T)\phi_T-m_0\bar \phi(0),
$$
where, by definition of the convex conjugate, 
$$
H(x,D\bar \phi)-\lg D\bar \phi, D_pH(x,D\bar \phi)\rg  = -H^*(x,D_pH(x,D\bar \phi)).
$$
So, by definition of $w$, we have 
$$
\int_0^T\int_{\T^d} m\left(\alpha+H^*(x,-\frac{w}{m}) \right)+ \int_{\T^d} m(T)\phi_T-m_0\bar \phi(0)=0.
$$
On another hand,  \eqref{Phi1} and \eqref{Phi2} imply that
$$
\limsup_{\ep\to0} \int_\Gamma\int_0^T L(\gamma(t),\dot\gamma(t)) dt d\eta^\ep(\gamma)
\leq  \int_0^T\int_{\T^d} H^*\left(x,-\frac{w}{m}\right) m\ dxdt, 
$$
where, by lower semi-continuity of $\Phi$ defined in \eqref{defPhi}, 
$$
 \int_\Gamma\int_0^T L(\gamma(t),\dot\gamma(t)) dt d\eta(\gamma)\leq 
\limsup_{\ep\to0} \int_\Gamma\int_0^T L(\gamma(t),\dot\gamma(t)) dt d\eta^\ep(\gamma).
$$
Putting together the three above inequalities, we get 
$$
 \int_\Gamma\int_0^T L(\gamma(t),\dot\gamma(t)) dt d\eta(\gamma)
+ \int_0^T\int_{\T^d} m\alpha+ \int_{\T^d} m(T)\phi_T-m_0\bar \phi(0)\leq0.
$$
Using finally Lemma \ref{lem:t1t2} yields  the desired result. 
%
\end{proof}

We are now ready to complete the proof of  Theorem \ref{theo:unique}.

\begin{proof}[Proof of  Theorem \ref{theo:unique}] We have already established the uniqueness of $m$. It remains to show that, for any $t\in [0,T]$, we have $\phi(t, \cdot)= \bar \phi(t,\cdot)$ a.e. on $\{m(t,\cdot)>0\}$, where $\bar \phi$ is the maximal solution of \eqref{subHJ} associated with $\alpha$. 
 We know that the result already holds for $t=T$ (because $\phi(T,\cdot)=\bar \phi(T,\cdot)=\phi_T$) and $t=0$ thanks to 
\eqref{phi0=psi0mae}. Fix $t\in (0,T)$. 
We apply Lemma \ref{lem:t1t2} to $\eta$ twice, first with $t_1=0$ and $t_2=t$ and then with $t_1=t$ and $t_2=T$: we have 
$$
\begin{array}{rl}
\ds \int_{\T^d} \phi(0,x) m(0,x)\ dx \; \leq & \ds \int_{\T^d} \phi(t,x) m(t,x)\ dx 
+ \int_\Gamma\int_{0}^{t} L(\gamma(s),\dot\gamma(s)) \ dsd\eta(\gamma) \\
& \ds \qquad \qquad +\int_{\T^d}\int_{0}^{t} \alpha(s, x) m(s,x)\ dsdx 
\end{array}
$$
and 
\be\label{ineququ2}
\begin{array}{rl}
\ds \int_{\T^d} \phi(t,x) m(t,x)\ dx \; \leq & \ds \int_{\T^d} \phi_T(x) m(T,x)\ dx 
+ \int_\Gamma\int_{t}^{T} L(\gamma(s),\dot\gamma(s)) \ dsd\eta(\gamma) \\
& \ds \qquad \qquad +\int_{\T^d}\int_{t}^{T} \alpha(s, x) m(s,x)\ dsdx 
\end{array}
\ee
We add both inequalities to get
$$
\begin{array}{rl}
\ds \int_{\T^d} \phi(0,x) m(0,x)\ dx \; \leq & \ds \int_{\T^d} \phi_T(x) m(T,x)\ dx 
+ \int_\Gamma\int_{0}^{T} L(\gamma(s),\dot\gamma(s)) \ dsd\eta(\gamma) \\
& \ds \qquad \qquad +\int_{\T^d}\int_{0}^{T} \alpha(s, x) m(s,x)\ dsdx 
\end{array}
$$
Since $\phi(0,\cdot)=\bar \phi(0, \cdot)$ a.e. on $\{m_0>0\}$, 
Lemma \ref{lem:equalpsi} states that the above inequality is in fact an equality. This implies in particular that there is an equality in \eqref{ineququ2}. Since the right-hand side of \eqref{ineququ2} does not depend of the specific choice of the minimizer, we get $\ds \int_{\T^d} \bar \phi(t,x) m(t,x)\ dx= \int_{\T^d} \phi(t,x) m(t,x)\ dx$. As  $\phi\leq \bar \phi$, this implies that $\bar \phi(t,\cdot)=\phi(t,\cdot)$ in $\{m(t,\cdot)>0\}$. 

\end{proof}

\begin{proof}[Proof of Theorem \ref{theo:mainex}] Proposition \ref{Prop:existence} states that  there is a solution $(\phi, \alpha)$ of the relaxed problem \eqref{PB:dual-relaxed}  such that $\phi$ is  locally Hölder continuous in $[0,T)\times \T^d$ and satisfies \eqref{CondSup}  in the viscosity sense. 
So Theorem \ref{theo:main} readily implies the existence part of Theorem \ref{theo:mainex}.

We now assume that $(m,\phi)$ is a solution of \eqref{MFG} for which $\phi$ satisfies \eqref{CondSup}. Let ${\mathcal O}=\{\phi<\bar \phi\}$. By Theorem \ref{theo:unique}, $m=0$ a.e. in the open set ${\mathcal O}$. So $\bar \phi$ solves  $-\partial_t\bar \phi+H(\cdot,D\bar \phi)\leq 0$ in ${\mathcal O}$ in the sense of distribution. The Hamiltonian being continuous and convex in the second variable, this inequality also holds in the viscosity sense.  On another hand, $\phi$ solves in the viscosity sense $-\partial_t\phi+H(\cdot,D\phi)\geq 0$ in $[0,T]\times \T^d$ and therefore in ${\mathcal O}$. But $\phi=\bar \phi$ in $\partial {\mathcal O}$, so that, by comparison,  $\phi\geq \bar \phi$ in ${\mathcal O}$. Since inequality $\phi\leq \bar \phi$ always holds by construction, we get $\phi=\bar \phi$ and uniqueness holds.  
\end{proof}


\subsection{Stability}

We complete this section by a  stability property of the weak solution of \eqref{MFG}. Assume that $(\phi^n,m^n)$ is the unique weak solution of \eqref{MFG} associated with an Hamiltonian $H^n$, a coupling $f^n$ and with the initial and terminal conditions $m_0^n$ and $\phi_T^n$, such that $\phi^n$ satisfies the additional condition \eqref{CondSup}. We suppose that the $(H^n)$, $(f^n)$, $(m_0^n)$ and $(\phi_T^n)$ satisfy the conditions (H1)$\dots$(H4) with rate growth and constants independent of $n$ and converge locally uniformly to $H$, $f$, $m_0$ and $\phi_T$ respectively. 

\begin{Proposition}\label{Prop:stabilo} The  $(\phi^n,m^n)$  converge, respectively locally uniformly and in $L^q$, to the unique solution $(\phi,m)$ of \eqref{MFG} associated with $H$, $f$, $m_0$ and $\phi_T$ for which \eqref{CondSup} holds.
\end{Proposition}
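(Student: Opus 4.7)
My plan is to leverage the variational characterization of weak solutions given by Theorem~\ref{theo:main}: each $(m^n, w^n)$ with $w^n=-m^n D_pH^n(\cdot,D\phi^n)$ minimizes $\mathcal{B}^n$ and each $(\phi^n,\alpha^n)$ with $\alpha^n=f^n(\cdot,m^n)$ minimizes the relaxed problem $\mathcal{A}^n$. Since all the constants in (H1)--(H4) are uniform in $n$, the energy estimates from the proofs of Lemma~\ref{Lem:dualite} and Proposition~\ref{Prop:existence} give uniform bounds: $\alpha^n$ in $L^p$, $m^n$ in $L^q$, $w^n$ in $L^{r'q/(r'+q-1)}$, $D\phi^n$ in $L^r$, and $\phi^n$ uniformly bounded. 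By Lemma~\ref{CShold} the $(\phi^n)$ are uniformly H\"older continuous on every compact subset of $[0,T)\times\T^d$, and the two-sided estimate $\phi_T^n(x)-C(T-t)\leq\phi^n(t,x)\leq\phi_T^n(x)+C(T-t)^\nu$ (coming from Lemma~\ref{lem:estiPhiT} applied as in the proof of Proposition~\ref{Prop:existence}) plus uniform convergence of $\phi_T^n$ forces equicontinuity up to $t=T$.

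I would then extract a subsequence along which $\phi^n\to\phi$ uniformly on $[0,T]\times\T^d$, $D\phi^n\rightharpoonup D\phi$ weakly in $L^r$, $m^n\rightharpoonup m$ weakly in $L^q$, $\alpha^n\rightharpoonup\alpha$ weakly in $L^p$, and $w^n\rightharpoonup w$ weakly. The continuity equation passes to the limit in distribution (using local uniform convergence of $H^n$ to identify that the flux is consistent with the limit problem), giving $(m,w)\in\mathcal{K}_1$. By convexity of $H$ in $\xi$ and local uniform convergence $H^n\to H$, lower semicontinuity yields $-\partial_t\phi+H(x,D\phi)\leq\alpha$ in the sense of distributions, so $(\phi,\alpha)\in\mathcal{K}$; and by standard stability of viscosity solutions, $\phi$ also satisfies $-\partial_t\phi+H(x,D\phi)\geq 0$ in the viscosity sense, i.e. \eqref{CondSup}.

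The key step is to show that the limit pairs are optimal for the limit variational problems. For each $n$, the duality identity $\mathcal{A}^n(\phi^n,\alpha^n)+\mathcal{B}^n(m^n,w^n)=0$ holds. Using weak lower semicontinuity of $\mathcal{A}$ and $\mathcal{B}$ together with the convergence of the data, one passes to the inequality $\mathcal{A}(\phi,\alpha)+\mathcal{B}(m,w)\leq 0$ in the limit; combined with Lemma~\ref{Lem:dualite} this forces equality, so $(\phi,\alpha)$ and $(m,w)$ are minimizers of the limit problems. Theorem~\ref{theo:main} then identifies $\alpha=f(\cdot,m)$ and $w=-m D_pH(\cdot,D\phi)$ a.e., so $(m,\phi)$ is a weak solution of the limit MFG system satisfying \eqref{CondSup}. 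By the uniqueness part of Theorem~\ref{theo:mainex} the accumulation point is unique, hence the full sequences converge.

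Finally, I would upgrade the weak $L^q$ convergence of $m^n$ to strong convergence. By strict convexity of $F(x,\cdot)$ on $[0,+\infty)$, the scalar inequality
\[
F(x,m^n)-F(x,m)-f(x,m)(m^n-m)\geq 0
\]
together with the convergence of $\int F^n(x,m^n)$ to $\int F(x,m)$ (obtained from the already established convergence of the values of $\mathcal{B}^n$, isolating the nonnegative kinetic and terminal terms via weak lower semicontinuity) forces $\int[F(x,m^n)-F(x,m)-f(x,m)(m^n-m)]\to 0$, and strict convexity plus the lower $L^q$ growth of $F$ then yield $m^n\to m$ strongly in $L^q$. The main obstacle is this last step, separating the kinetic part $\int m^n H^{n*}(\cdot,-w^n/m^n)$ from the potential part $\int F^n(x,m^n)$ in the limit of $\mathcal{B}^n$; this is handled by noting that each piece is individually weakly lower semicontinuous and their sum converges to the sum of the limits, which forces both limits to be achieved.
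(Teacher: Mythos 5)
Your proposal is correct and takes essentially the same route as the paper's proof: invoke Theorem~\ref{theo:main} to view $(m^n,w^n)$ and $(\phi^n,\alpha^n)$ as minimizers of the two variational problems, derive uniform bounds ($L^q$, $L^p$, $L^r$, BV, and the H\"older estimate of Lemma~\ref{CShold}), pass to the limit, identify the limits as minimizers of the limit problems, and conclude via Theorem~\ref{theo:main} and the uniqueness of Theorem~\ref{theo:mainex}. The only difference is organizational: the paper first gets strong convergence of $(m^n,w^n)$ from $\Gamma$-convergence of the second problem alone (which then yields strong $L^p$ convergence of $\alpha^n=f^n(\cdot,m^n)$ before the $\phi^n$ are treated), whereas you identify all weak limits simultaneously through the duality identity $\mathcal A^n+\mathcal B^n=0$ and upgrade $m^n$ to strong $L^q$ convergence at the end via the Bregman-divergence argument --- both being precisely the ``standard $\Gamma$-convergence arguments'' the paper leaves implicit.
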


The result is a simple consequence of Theorem \ref{theo:main} and of the $\Gamma-$convergence of the corresponding variational problems.  

\begin{proof} Let us set $w^n=-m^nD_pH_n(\cdot,D\phi^n))$ and $\alpha^n= f(\cdot,m^n)$. According to the second part of Theorem \ref{theo:main}, the pair $(m^n,w^n)$ is a minimizer of problem  \eqref{Pb:mw2} associated with $H^n$, $f^n$, $m_0^n$ and $\phi_T^n$, while the pair $(\phi^n,\alpha^n)$ is a minimizer of problem \eqref{PB:dual-relaxed} associated with the same data. Using the estimates established for the proof of Proposition \ref{prop:valeursegales}, we have 
\be\label{boundmnwn}
\|m^n\|_{L^q}+ \|w^n\|_{L^{\frac{r'q}{r'+q-1}}} \leq C. 
\ee
Standard $\Gamma-$convergence arguments then show that $(m^n,w^n)$ converge in $L^q\times L^{\frac{r'q}{r'+q-1}}$ to the unique minimum of the problem \eqref{Pb:mw2} associated with $H$, $f$, $m_0$ and $\phi_T$. 
 
Estimate \eqref{boundmnwn} and the growth condition \eqref{Hypf} on $f$ imply that the sequence $(\alpha^n=f(\cdot,m^n))$  in $L^p$ to $\alpha:= f(\cdot, m)$. Lemma \ref{lem:estiPhiT} then gives an upper bound  for the $\phi^n$, while the additional condition \eqref{CondSup} provides a lower bound. Arguing as in the proof of Proposition \ref{Prop:existence}, one can show that inequality \eqref{eq:distrib} combined with the $L^\infty$ bound on $\phi^n$ provides a bound on $\|D\phi^n\|_{L^r}$ and on $\|\phi^n\|_{BV}$. Finally, Lemma \ref{CShold} provides a uniform Hölder continuity of $\phi^n$ in any compact subset of $[0,T)\times \T^d$. Hence $(\phi^n)$ converges, up to a subsequence, locally uniformly to a map $\phi$.   Then, as in the proof of Proposition \ref{Prop:existence}, the pair $(\phi,\alpha)$ belongs to $\mathcal K$ and is a minimizer of \eqref{PB:dual-relaxed}. 

As $(m,w)$ solves \eqref{Pb:mw2} while $(\phi, \alpha)$ is a solution of  \eqref{PB:dual-relaxed}, the first part of Theorem \ref{theo:main} implies that the pair $(\phi,m)$ is a weak solution of \eqref{MFG}. Since the $\phi^n$ satisfy  the additional condition \eqref{CondSup}, so does $\phi$. Therefore $(\phi,m)$ is the unique weak solution of \eqref{MFG} which satisfies \eqref{CondSup}. This shows that the full sequence $(m^n,\phi^n)$ converges to $(m,\phi)$. 
\end{proof}


\section{Application to differential games with  finitely many players}\label{sec:jeufini}

We now explain  how the solution of the mean field game system can be used to derive approximate Nash equilibria for differential games with finitely many players.

\subsection{Model}

In order to define the differential game, we introduce (or recall) few notations. We let $N$ be the number of players. As before we denote by $L$ the convex conjugate of the map $p\to H(x,-p)$, i.e., $L(x,\xi)=H^*(x,-\xi)$. The map $L$ will be the uncoupled part of the cost of a single player. The coupled part will be given by a regularization of the coupling $f$. For this, let us fix a smooth, symmetric and  nonnegative regularization kernel $\xi:\R^d\to \R$ and let us set, for $\delta>0$, $\xi_\delta(x)= \frac{1}{\delta^d}\xi(\frac{x}{\delta^d})$. For $\delta, \sigma>0$,  the regularized coupling is the map $f^{\delta,\sigma}:\T^d\times P(\T^d)\to \R$ defined by
$$
f^{\delta, \sigma}(x,\mu)= (f^\delta(\cdot,\mu)\star \xi^\sigma) (x) \; {\rm where }\; f^\delta(x,\mu) = f(x, \xi^\delta\star \mu(x)). 
$$
The idea is that the parameter $\delta$ allows to give a meaning to the expression $f(x,\mu)$ when $\mu$ is a singular measure, while the second regularization in $\sigma$ ensures a space regularity of the resulting map when $\delta$ is small. When $\mu$ is in $L^1(\T^d)$, we set (with a slight abuse of notation)  $f^{\sigma}=f^{0,\sigma}$. 
We often use the above definition for empirical measures of the form $\mu= \frac{1}{N-1}\sum_{j\neq i}^N\delta_{x^j}$ (where $i\in 1, \dots, N$ and $x^j\in \T^d$ for $j=1,\dots, N$ for $j\neq i$):  then
$$
f^\delta\left(x,\frac{1}{N-1}\sum_{j\neq i}\delta_{x^j}\right) = f\left(x, \frac{1}{N-1}\sum_{j\neq i}\xi^\delta(x-x^j)\right)
$$
while 
$$
f^{\delta,\sigma}\left(x,\frac{1}{N-1}\sum_{j\neq i}\delta_{x^j}\right) =\int_{\R^d} \xi^\sigma(x-y)f\left(y, \frac{1}{N-1}\sum_{j\neq i}\xi^\delta(y-x^j)\right)dy.
$$

Let us start with the model.  Recall that $N$ is the number of players. Player $i$ has a current position denoted by $\gamma^i(t)$ and controls its own velocity $\dot \gamma^i(t)$.  At time $0$, the initial position $x^i_0$ of player $i$ (where $i=1, \dots, N$) is  chosen randomly  with probability $m_0$. So the trajectory $ \gamma^i$ satisfies $\gamma^i(0)= x^i_0$.  We assume that the random variables  $x^1_0, \dots, x^N_0$ are independent. If the players play a family of  trajectories $\gamma^1, \dots, \gamma^N$, the cost of player $i$ is given by 
\be\label{defsimplecost}
J_i^N (\gamma^1, \dots, \gamma^N) = \int_0^T \left( L(\gamma^i(s), \dot \gamma^i(s)) + f^{\delta,\sigma}\left(\gamma^i(s), \frac{1}{N-1}\sum_{j\neq i} \delta_{\gamma^j(s)}\right)\right) ds + \phi_T(\gamma^i(T))
\ee
Players can play random strategies with delay. To define this notion, let us fix a standard probability space $(\Omega, {\mathcal F}, \P)$ (in practice, we choose $\Omega=[0,1]$, ${\mathcal F}$ is the Borel $\sigma-$algebra and $\P$ is the Lebesgue measure). A strategy for player $i$ is a Borel measurable map $\beta^i:\Omega\times \T^d\times \Gamma^{N-1}\to \Gamma$ such that, 
\begin{itemize}
\item[(i)] for any $(\omega, x, (\gamma^{j})_{j\neq i}) \in \Omega\times \T^d\times \Gamma^{N-1}$, $\beta^i(\omega, x, (\gamma^{j})_{j\neq i})(0)=x$, 

\item[(ii)] there is a delay $\tau>0$ with the property that, for any $(\omega, x)\in \Omega\times \T^d$ and any $(\gamma^{1,j})_{j\neq i}$, $(\gamma^{2,j})_{j\neq i}$ with  $\gamma^{1,j}(s)= \gamma^{2,j}(s)$ for any $j\neq i$ and $s\in [0,t]$, the responses $\beta^i(\omega, x, (\gamma^{1,j})_{j\neq i})$ and 
$\beta^i(\omega, x,(\gamma^{2,j})_{j\neq i})$ coincide on $[0,t+\tau]$.  
\end{itemize}
The interpretation is that player $i$ observes his initial position $x$ and the other players' trajectories (in a nonanticipative way) and answers a random trajectory starting at $x$;  the parameter $\Omega$ formalizes the random device, as in Aumann \cite{Au61}. Moreover there is a small delay (the quantity $\tau$) between the observation and the reaction. This delay can be arbitrarily small. 

Given $N$ (independent) strategies $(\beta^1, \dots, \beta^N)$ and a family of initial conditions $(x^1_0, \dots, x^N_0)$, one can associate a unique family of Borel measurable maps ${\bf \gamma}^i: \Omega^N\times (\T^d)^N \to \Gamma$ (for $i=1, \dots, N$) which satisfies, 
\be\label{equil}
\beta^i(\omega^i, x_0^i, (\gamma^j(\omega,x))_{j\neq i})= \gamma^i(\omega,x) \qquad  \mbox{\rm for any $i=1, \dots, N$},
\ee
where $x_0=(x_0^1, \dots, x_0^N)$ and $\omega=(\omega^1, \dots,\omega^N)$: this is just a consequence of the delay of the strategies (see, e.g.,  \cite{c1}, \cite{CaQu}). 

Recalling that the initial conditions are chosen randomly with probability $m_0$, we are finally ready to define the cost, for player $i$, of a family of strategies $(\beta^1, \dots, \beta^N)$: it is given by 
$$
{\bf J^N_i}(\beta^1, \dots, \beta^N)= \int_{\Omega^N\times (\T^d)^N} J^N_i(\gamma^1(\omega,x), \dots, \gamma^N(\omega,x))
\prod_{j=1}^N \P(d\omega_j)m_0(dx_0^j)
$$
where $J^N_i$ is defined in \eqref{defsimplecost} and where the family of trajectories $(\gamma^1(\omega,x), \dots, \gamma^N(\omega,x))$ is characterized by the fixed point relation \eqref{equil}. In order to single out the behavior of player $i$, we often write ${\bf J^N_i}((\beta^j)_{j\neq i}, \beta^i)$ for ${\bf J^N_i}(\beta^1, \dots, \beta^N)$.

Let us finally give examples of strategies for player $i$: an elementary one is given by a Borel measurable $\beta^i: \T^d\to \Gamma$: such a strategy is deterministic (it does not depend on $\Omega$) and open-loop (it does not depend on the other players' actions). It associates with any initial condition $x^i_0\in\T^d$ a trajectory $\beta^i(x^i_0)$ starting at $x_0^i$. We will be particularly interested in random open-loop strategies $\beta^i: \Omega\times \T^d \to \Gamma$. They are now random (they depend on $\Omega$) but are still open-loop (no dependence with respect to the other players' trajectories). In fact, under few restriction, there is a one-to-one correspondence between these strategies and the probability measures on curves introduced in subsection  \ref{subsec:uniq}. Indeed, let $\beta^i$ be as above. Since the initial position $x_0^i$ of player $i$ is chosen randomly with probability $m_0$, one can associate  with $\beta^i$ the measure $\eta$ on $\Gamma$ defined by the equality
\be\label{defbetai}
\int_{ \T^d\times \Gamma} G(\gamma) d\eta(\gamma) = \int_{\Omega\times \T^d} G(\beta^i(\omega,x))d\P(\omega)m_0(dx),
\ee
for any continuous and bounded map $G:\Gamma\to \R$. 
Note that, by definition, $e_0\sharp \eta= m_0$. If we further assume that 
$$
\int_{\Gamma} \int_0^T \left|\frac{d}{dt} \beta^i(\omega,x)(s)\right|^{r'} \ ds m_0(dx)<+\infty,
$$
and that $m'(t):= e_t\sharp \eta$ is absolutely continuous for any $t\in [0,T]$ (the density being denoted by $m'(t,\cdot)$), with $m'\in  L^q((0,T)\times\T^d)$, then $\eta$ belongs to the set $\tilde M(\Gamma)$ defined in subsection \ref{subsec:uniq}.  Conversely, let $\eta \in \tilde M(\Gamma)$ and assume that $m_0\sharp \eta=m_0$. Then by desintegration there exists a Borel measurable family of probabilities $(\eta_x)_{x\in \T^d}$ on $\Gamma$ such that $\ds \eta(d\gamma)= \int_{\T^d}\eta_x(d\gamma) m_0(dx)$ and, for $m_0-$a.e. $x\in \T^d$, $\eta_x-$almost any trajectory $\gamma$ starts at $x$. Using the Blackwell-Dubins Theorem \cite{BD83},  one can represent the family of probability measures $(\eta_x)$ by a single map, which is exactly a random open-loop strategy: there exists a measurable map $\beta^i: \Omega\times \T^d \to \Gamma$ satisfying the relation \eqref{defbetai} for any continuous and bounded map $G:\T^d\times \Gamma\to \R$.  To simplify notations, we will say that  $\eta$ itself  is a random, open-loop strategy.

\subsection{Existence of approximate Nash equilibria in open-loop strategies}

Let $(m,\phi)$ be the unique  weak solution of the mean field game system \eqref{MFG} such that the additional condition \eqref{CondSup} holds. Let  $\bar \eta\in \tilde M(\Gamma)$ be such that 
\begin{itemize}
\item[(C1)] $m(t)= e_t\sharp \bar \eta$ for any $t\in [0,T]$, 

\item[(C2)] the following equality holds: 
$$\begin{array}{rl}
\ds \int_{\T^d} \phi(0,x)m(0,x) dx \; = & \ds \int_{\T^d} \phi_T(x)m(T,x)+ \int_\Gamma\int_0^T L(\gamma(s),\dot\gamma(s)) dsd\bar \eta(\gamma) \\
& \ds \qquad \qquad +\int_{\T^d}\int_0^T f(x,m(s,x)) m(s,x)dsdx 
\end{array}
$$
\end{itemize} 
The existence of such a measure $\bar \eta$ is guaranteed by Lemma \ref{lem:equalpsi}. 

\begin{Theorem}\label{theo:Njoueurs}  Assume that $f$ is uniformly Lipschitz continuous with respect to the second variable. For any $\ep>0$ there exist $N_0$, $\delta,\sigma>0$ such that, if $N\geq N_0$, the family of open-loop strategies $(\bar \eta, \dots, \bar \eta)$ is an approximate Nash equilibrium for the game: namely,  for any strategy $\beta^i$ of player $i$, 
$$
{\bf J^N_i}((\bar \eta)_{j\neq i}, \beta^i) \geq {\bf J^N_i}((\bar \eta)_{j\neq i}, \bar \eta)- \ep. 
$$
Moreover, 
\be\label{finalopeitmalcost}
\left|{\bf J^N_i}((\bar \eta)_{j\neq i}, \bar \eta)- \int_{\T^d} \phi(0,x)m_0(x)dx\right| \leq \ep.
\ee
\end{Theorem}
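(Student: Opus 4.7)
The plan is to combine a propagation-of-chaos argument with the Hamilton--Jacobi subsolution inequality satisfied by $\phi$. The crucial observation is that since $\bar\eta$ is an open-loop random strategy, when players $j\ne i$ all use $\bar\eta$ their trajectories $\gamma^j$ are i.i.d.\ draws from $\bar\eta$; hence the random measure $\mu^{-i}_N(s):=\tfrac{1}{N-1}\sum_{j\ne i}\delta_{\gamma^j(s)}$ depends only on $(\omega^j,x_0^j)_{j\ne i}$, and is in particular independent of whatever strategy player $i$ chooses.

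\textbf{Step 1: the symmetric cost.} When player $i$ also plays $\bar\eta$, all $\gamma^j$ are i.i.d.\ with law $\bar\eta$ and $e_s\sharp\bar\eta=m(s)$. A standard variance estimate combined with the uniform Lipschitz continuity of $f$ in $m$ (through which $f^{\delta,\sigma}$ is Lipschitz in $\mu$ with constant depending on $\delta,\sigma$) yields
\[
\E\Bigl[\int_0^T\!\!\int_{\T^d}\!\bigl|f^{\delta,\sigma}(x,\mu^{-i}_N(s))-f^{\delta,\sigma}(x,m(s))\bigr|\,m(s,x)\,dx ds\Bigr]\le C(\delta,\sigma)/\sqrt{N}.
\]
Integrating over $\gamma^i\sim\bar\eta$ and then letting $\delta,\sigma\to 0$, the growth bound \eqref{Hypf} and the $L^q$-convergence $\xi^\delta\star m\to m$ give
\[
\lim_{\delta,\sigma\to 0}\lim_{N\to\infty}\E\Bigl[\int_0^T\!f^{\delta,\sigma}(\gamma^i,\mu^{-i}_N)\,ds\Bigr]=\int_0^T\!\!\int_{\T^d}\!f(x,m(s,x))\,m(s,x)\,dxds.
\]
The other two terms of the cost are deterministic functionals of $\bar\eta$, equal to $\int_\Gamma\int_0^T L\,ds\,d\bar\eta+\int\phi_T\,dm(T)$. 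Adding the three contributions and invoking condition (C2) / equation \eqref{equalpsi} gives exactly $\int_{\T^d}\phi(0,x)m_0(x)dx$, which establishes \eqref{finalopeitmalcost} for $N$ large and $\delta,\sigma$ small enough.

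\textbf{Step 2: the deviation lower bound.} Let $\beta^i$ be an arbitrary strategy for player $i$. Conditioning on $(\omega^j,x_0^j)_{j\ne i}$, the deviator's problem becomes a single-agent optimal control with running cost $L(\gamma^i,\dot\gamma^i)+f^{\delta,\sigma}(\gamma^i,\mu^{-i}_N)$ and terminal cost $\phi_T(\gamma^i(T))$. We exploit the HJ inequality $-\partial_t\phi+H(x,D\phi)\le\alpha$ with $\alpha=f(\cdot,m)$: regularizing $(\phi,\alpha)$ exactly as in the proof of Proposition~\ref{prop:valeursegales} produces $(\phi^\kappa,\alpha^\kappa)$ satisfying $-\partial_t\phi^\kappa+H(x,D\phi^\kappa)\le\alpha^\kappa$, and the Fenchel inequality $H(x,p)+L(x,\xi)\ge-\langle p,\xi\rangle$ (from $L(x,\cdot)=H^*(x,-\cdot)$) gives along any absolutely continuous curve $\gamma$
\[
\phi^\kappa(0,\gamma(0))\le\phi^\kappa(T,\gamma(T))+\int_0^T\bigl[L(\gamma,\dot\gamma)+\alpha^\kappa(s,\gamma(s))\bigr]ds.
\]
Taking expectation over $\gamma^i$ (whose initial position has law $m_0$), sending $\kappa\to 0$ using the continuity of $\phi$ given by Theorem~\ref{theo:mainex} together with $\alpha^\kappa\to\alpha$ in $L^p$, and finally replacing $f(\gamma^i,m(s,\gamma^i))$ by $f^{\delta,\sigma}(\gamma^i,\mu^{-i}_N)$ via the chaos estimate of Step~1, we deduce
\[
\int_{\T^d}\phi(0,x)m_0(x)dx-\ep\le {\bf J^N_i}((\bar\eta)_{j\ne i},\beta^i)
\]
whenever $N$ is large and $\delta,\sigma$ are small. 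Combined with Step~1 this yields the $\ep$-Nash property.

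\textbf{Main obstacle.} The delicate point is to make the chaos replacement $f^{\delta,\sigma}(\cdot,\mu^{-i}_N)\simeq f(\cdot,m)$ uniform over \emph{all} deviations $\beta^i$. One handles this by a truncation argument: if $\E[\int_0^T L(\gamma^i,\dot\gamma^i)ds]$ exceeds a large threshold $M$, then by the coercivity \eqref{HypHstar} of $L$ the deviator's cost already exceeds $\int\phi(0)m_0+\ep$, so there is nothing to prove; on the complementary event the law of $\gamma^i$ is supported on a tight subset of $W^{1,r'}$-curves, on which the Lipschitz continuity of $f$ together with a uniform-in-$\beta^i$ strengthening of the LLN (obtained by controlling $\|\xi^\delta\star\mu^{-i}_N(s)-\xi^\delta\star m(s)\|_\infty$ via a covering argument in $(s,y)$) gives a bound independent of $\beta^i$. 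The second technical point---passage to the limit $\kappa\to 0$ along a merely absolutely continuous $\gamma^i$---is handled exactly as in the proof of Lemma~\ref{lem:t1t2}, using the a.e.\ convergence $D\phi^\kappa\to D\phi$ and the $L^r$ bound on $D\phi$.
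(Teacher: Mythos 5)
Your Step 1 (the symmetric cost) is essentially the paper's argument: i.i.d.\ sampling from $\bar\eta$, a quantitative law of large numbers (the paper uses the Wasserstein rate $N^{-2/(d+4)}$ from \cite{RR98} rather than $N^{-1/2}$, but this is immaterial), the Lipschitz bound on $f^{\delta,\sigma}$ in $\mu$ with constant $C\delta^{-(d+1)}$, and condition (C2). Your observation that the chaos estimate is automatically uniform over deviations is also correct and in fact simpler than you make it: since the Lipschitz-in-$\mu$ bound on $f^{\delta,\sigma}$ is uniform in $x$ and $\mu^{-i}_N$ is independent of player $i$'s choice, no covering argument or ``uniform strengthening of the LLN'' is needed.

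The genuine gap is in Step 2, in the passage $\kappa\to 0$ and the subsequent ``replacement''. After mollifying you want to conclude
$\int\phi(0)m_0\le\E\bigl[\phi_T(\gamma^i(T))+\int_0^T(L(\gamma^i,\dot\gamma^i)+\alpha(s,\gamma^i(s)))\,ds\bigr]$ with $\alpha=f(\cdot,m)$. But $\alpha$ is merely an $L^p$ function, and the quantity $\int_0^T\alpha(s,\gamma^i(s))\,ds$ is not defined along an arbitrary curve: to pass to the limit $\E[\int\alpha^\kappa(s,\gamma^i(s))ds]\to\E[\int\alpha(s,\gamma^i(s))ds]$ using only $\alpha^\kappa\to\alpha$ in $L^p$ you need the occupation measure of $(s,\gamma^i(s))$ to have a density in $L^q$ --- which is exactly what fails for a general deviation $\beta^i$ (the deviator may, e.g., follow a single deterministic curve, making $e_t\sharp\eta$ purely singular). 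This is the very obstruction the paper flags in the Remark after Lemma \ref{lem:equalpsi}, and your truncation on the kinetic energy only yields tightness in $W^{1,r'}$, not absolute continuity of the time marginals. Your final step, replacing $f(\gamma^i,m(s,\gamma^i))$ by $f^{\delta,\sigma}(\gamma^i,\mu^{-i}_N)$ ``via the chaos estimate'', also does not go through: the chaos estimate only compares quantities that are already mollified in $x$ ($f^{\delta,\sigma}$ versus $f^{\sigma}$ evaluated at $m(s)$), never the raw $f(\cdot,m(s,\cdot))$ along a singular curve. The paper's proof avoids the problem entirely by never evaluating $f(\cdot,m)$ along the deviator's trajectory: it introduces the auxiliary viscosity solution $\phi^\sigma$ of the HJ equation with the spatially continuous right-hand side $f^\sigma(x,m(t))$ (well-posed by \cite{LP87}), proves $\phi(0,\cdot)\le\phi^\sigma(0,\cdot)+\ep/10$ by comparison --- using assumption (H3) to show that $\phi\star\xi^\sigma$ is an approximate subsolution of that equation --- and then runs the verification inequality for $\phi^\sigma$ against the continuous function $f^\sigma$, which is defined pointwise along any curve. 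You need this (or an equivalent device) to make Step 2 rigorous.
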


\begin{Remarks}\end{Remarks}
\begin{enumerate}
\item The key point in the above result is that no player can improve his payoff in a substantial way by changing its strategy, even by observing the other players.

\item Inequality \eqref{finalopeitmalcost} says that the average optimal cost of a player is approximately given by the quantity $\ds \int_{\T^d} \phi(0,x)m_0(x)dx$.

\item The measure $\bar \eta$ satisfying the above conditions (C1) and (C2) need not be unique. However, given, for each $i=1, \dots,N$, a measure $\bar \eta^i$ satisfying conditions (C1) and (C2), one can prove exactly in the same way that the family $(\bar \eta^1, \dots, \bar \eta^N)$ is an approximate Nash equilibrium: in particular, players do not need to coordinate to choose the open loop strategy. 

\item It would be much more natural {\it not to assume} that the initial conditions of the players are chosen i.i.d. according to the measure $m_0$, but just to suppose that the empirical distribution $\frac{1}{N} \sum_{i=1}^N \delta_{x_0^i}$ of the fixed initial positions $(x_0^1, \dots, x_0^N)$ of the players is close to $m_0$. However we do not know how to handle this problem. 
\end{enumerate}

\begin{proof}[Proof of Theorem  \ref{theo:Njoueurs}.] Before starting the proof, we need to fix notations. Let us fix a strategy $\beta^i$ for player $i$ and assume that the other players play the open-loop strategy $\bar \eta$. Recall that  one can associate with $\bar \eta$ a genuine strategy $\beta^j:\Omega\times \T^d\to\T^d$ such that \eqref{defbetai} holds (with $\eta$ replaced by $\bar \eta$ and $\beta^i$ replaced by $\beta^j$). Let ${\bf \gamma}^j: \Omega^N\times (\T^d)^N \to \Gamma$ (for $j=1, \dots, N$) be  the family Borel measurable maps  given by the fixed point relation \eqref{equil} holds. As  (for $j\neq i$) the strategy $\beta^j$ does not depend on the other players' behavior,  we have ${\bf \gamma}^j(\omega, x_0)= \beta^j(\omega^j,x^j_0)$. On another hand, $\gamma^i(\omega,x_0)$ a priori depends on all trajectories $({\bf \gamma}^j)_{j\neq i}$.
Accordingly we can rewrite the cost ${\bf J^N_i}((\bar \eta)_{j\neq i}, \beta^i)$ as 
$$
{\bf J^N_i}((\bar \eta)_{j\neq i}, \beta^i)= \int_{\Omega\times \T^d\times \Gamma^{N-1}}
J^i_N\left((\gamma^j)_{j\neq i}, \beta^i (\omega^i,x_0^i, (\gamma^j)_{j\neq i})\right) \P(d\omega^i)m_0(dx_0^i)\prod_{j\neq i} \bar \eta(d\gamma^j)
$$
To simplify notations we will simply write $\ds  \gamma^{x_0^i,\beta^i}$ for $ \beta^i (\omega^i,x_0^i, (\gamma^j)_{j\neq i})$ but keep in mind that $\ds  \gamma^{x_0^i,\beta^i}$ still depends on $\omega^i$ and on the $(\gamma^j)_{j\neq i}$.\\

Next we establish preliminary estimates. 
By definition of the open-loop strategies $\bar \eta$, the  $\gamma^j(t)$ (for $j\neq i$) are iid random variables of law $e_s\sharp \bar \eta=m(t)$. Following Section 10 in \cite{RR98}, we have therefore
$$
\int_{\Gamma^{N-1}} {\bf W}_2^2\left( \frac{1}{N-1}\sum_{j\neq i} \delta_{\gamma^j(s)}, m(t)\right) \prod_{j\neq i} d\bar \eta(\gamma^j) \leq  CN^{-2/(d+4)}
$$
(where ${\bf W}_2$ is the $2-$Wasserstein distance). 
As the map $m\to f(x,m)$ is uniformly Lipschitz continuous, the map $f^{\delta,\sigma}$ satisfies, for any $x^i\in \T^d$ and any $\mu,\nu\in P(\T^d)$, 
$$
\left| f^{\delta,\sigma}(x^i, \mu)-f^{\delta,\sigma}(x^i,\nu)\right| \leq C \left|\xi^\delta\star(\mu-\nu)(x)\right| \leq C {\rm Lip}(\xi^\delta) {\bf W}_2(\mu,\nu)\leq C \delta^{-(d+1)}{\bf W}_2(\mu,\nu). 
$$
Therefore
\be\label{intintint}
\left| \int_{\Gamma^{N-1}} f^{\delta,\sigma}\left(x^i, \frac{1}{N-1}\sum_{j\neq i} \delta_{\gamma^j(s)}\right) \prod_{j\neq i} d\bar \eta(\gamma^j) - f^{\delta,\sigma}(x^i, m(s))\right|\leq C\delta^{-(d+1)}N^{-2/(d+4)}  
\ee
Note also that, for any $x\in \T^d$ and $t\in [0,T]$,  
\be\label{intintint2}
\begin{array}{rl}
\ds \left| f^{\delta,\sigma}(x,m(t))-f^{\sigma}(x,m(t))\right| \; \leq & \ds  \int_{\R^d} \xi^\sigma(x-y)\left| f(y, (\xi^\delta\star m(t))(y))-f(y,m(t,y))\right| dy \\
\leq & \ds C \int_{\R^d} \xi^\sigma(x-y)\left| (\xi^\delta\star m(t))(y)-m(t,y)\right| dy \\
\leq & \ds C \|\xi^\sigma\|_{L^p}\left\| \xi^\delta\star m(t)-m(t,\cdot)\right\|_{L^q} =: C_\sigma(t,\delta)
\end{array}
\ee
where $C_\sigma(\cdot,\delta)$ tends to $0$ in $L^q((0,T))$ as $\delta\to 0$ because  $\xi^\delta\star m$ converges to $m$ in $L^q((0,T)\times \T^d)$. 
We set 
$$
C_\sigma (\delta):= \int_0^T C_\sigma(t,\delta)dt 
$$
and keep in mind that $C_\sigma (\delta)\to 0$ as $\delta\to 0$.

We are now ready to start the proof of the theorem. Let us estimate the cost of player $i$ when he plays the strategy $\beta^i$: by \eqref{intintint}
we have
$$
\begin{array}{rl}
\ds {\bf J_i^N}((\bar \eta)_{j\neq i}, \beta^i)\; =  & \ds \int_{\Omega\times \T^d\times \Gamma^{N-1}} \left[\int_0^T \left(L\left(\gamma^{ x^i_0, \beta^i}_s, \dot\gamma^{ x^i_0, \beta^i}_s\right) + f^{\delta,\sigma}(\gamma^{ x^i_0, \beta^i}_s,\frac{1}{N-1}\sum_{j\neq i} \delta_{\gamma^j(s)})\right)ds \right.\\
& \ds \qquad \qquad \qquad \qquad \left.+ \phi_T(\gamma^{ x^i_0, \beta^i}_T) \right] d\P(\omega^i)dm_0( x^i_0)\prod_{j\neq i} d\bar \eta(\gamma^j) \\
\geq & \ds \int_{\Omega\times \T^d\times \Gamma^{N-1}} \left[\int_0^T \left(L\left(\gamma^{ x^i_0, \beta^i}_s, \dot\gamma^{ x^i_0, \beta^i}_s\right) + f^{\delta,\sigma}(\gamma^{ x^i_0, \beta^i}_s,m(s))\right)ds 
\right.\\
& \ds \qquad \qquad \qquad \left.
+ \phi_T(\gamma^{x^i_0, \beta^i}_T) \right] d\P(\omega^i)dm_0( x^i_0)\prod_{j\neq i} d\bar \eta(\gamma^j)- C\delta^{-(d+1)}N^{-2/(d+4)}.
\end{array}
$$
Note that the only dependence  with respect to the $(\gamma^j)_{j\neq i}$ of the integrand in the above expression is just through $\gamma^{x^i_0, \beta^i}$. Therefore it convenient to introduce  the probability measure $\eta$ on $\Gamma$ as 
$$
\int_{\Gamma} G(\gamma)d\eta(\gamma)= \int_{\Omega\times \T^d\times \Gamma^{N-1}} G\left(\gamma^{x^i_0, \beta^i}\right) d\P(\omega^i)dm_0( x^i_0)\prod_{j\neq i} d\bar \eta(\gamma^j)
$$
for any continuous and bounded map $G$ on $\Gamma$. Then 
\be\label{plugging}
\begin{array}{rl}
\ds {\bf J_i^N}((\bar \eta)_{j\neq i}, \beta^i)\; \geq   & \ds \int_{\Gamma} \left[\int_0^T \left(L\left(\gamma(s), \dot\gamma(s)\right) + f^{\delta,\sigma}(\gamma(s),m(s))\right)ds 
\right.\\
& \ds \qquad \qquad \qquad \qquad 
+ \phi_T(\gamma(T)) \Big] d\eta(\gamma) - C\delta^{-(d+1)}N^{-2/(d+4)}\\
\geq & \ds  \int_{\Gamma} \left[\int_0^T \left(L\left(\gamma(s), \dot\gamma(s)\right) + f^{\sigma}(\gamma(s),m(s))\right)ds 
\right.\\
& \ds \qquad \qquad \qquad \qquad 
+ \phi_T(\gamma(T)) \Big] d\eta(\gamma) - C\delta^{-(d+1)}N^{-2/(d+4)}-C_\sigma (\delta), 
\end{array}
\ee
where the last inequality comes from \eqref{intintint2}.
Let $\phi^\sigma$ be the unique continuous viscosity solution of the Hamilton-Jacobi equation (with a  time-measurable dependent Hamiltonian, see \cite{LP87})
\be\label{eqphisigma}
\left\{\begin{array}{l}
-\partial_t\phi^\sigma+H(x,D\phi^\sigma) = f^\sigma(x,m(t)) \;\; {\rm in}\; (0,T)\times \T^d\\
\phi^\sigma(T,x)= \phi_T(x)\; {\rm in}\;\; \T^d
\end{array}\right.
\ee
By definition of $f^\sigma$ and following the proof of Proposition \ref{prop:valeursegales} the map $\psi^\sigma:=\phi\star \xi^\sigma$ is a subsolution of 
$$
\left\{\begin{array}{l}
-\partial_t\psi^\sigma+H(x,D\psi^\sigma) \leq  f^\sigma(x,m(t))+C  \sigma^{1-(d+1)\theta/r}(1+ \|D\phi\|_r^\theta)\;\;  {\rm in}\;\; (0,T)\times \T^d)\\
\psi^\sigma(T,x)\leq \phi_T(x)+C \sigma\;\; {\rm in}\;  \T^d
\end{array}\right.
$$
By comparison, we get
$$
\psi^\sigma= \phi\star \xi^\sigma\leq \phi^\sigma +CT \sigma+ C  \sigma^{1-(d+1)\theta/r}(1+ \|D\phi\|_r^\theta)\leq \phi^\sigma +C \sigma^{1-(d+1)\theta/r}.
$$
In particular, using the continuity of $\phi(0,\cdot)$, we obtain, for $\sigma$ small enough, 
\be\label{lowerboundphisigma}
\phi(0,\cdot) \leq \phi^\sigma(0,\cdot) +\frac{\ep}{10}.
\ee
Recalling that $\phi^\sigma$ solves \eqref{eqphisigma} and that $e_0\sharp \eta=m_0$, we can follow the computation of Lemma \ref{lem:t1t2} with $t_1=0$ and $t_2=T$  to get
$$
\int_{\T^d} \phi^\sigma(0,x)m_0(x) \leq \int_{\Gamma} \left[\int_0^T \left(L\left(\gamma(s), \dot\gamma(s)\right) + f^{\sigma}(\gamma(s),m(s))\right)ds + \phi_T(\gamma(T)) \right] d\eta(\gamma).
$$
Plugging \eqref{lowerboundphisigma} and the last inequality into \eqref{plugging} gives 
\be\label{estipourbetai}
\begin{array}{rl}
\ds {\bf J_i^N}((\bar \eta)_{j\neq i}, \beta^i)\; \geq  & \ds \int_{\T^d} \phi(0,x)m_0(x) - C\delta^{-(d+1)}N^{-2/(d+4)}-C_\sigma (\delta) -\frac{\ep}{10}.
\end{array}
\ee
This inequality holds for any strategy $\beta^i$ and for $\sigma$ so small that \eqref{lowerboundphisigma} holds (note that this condition does not involve $\beta^i$). 
We now assume that player $i$ plays the open-loop control $\bar \eta$. Arguing as for the proof of \eqref{plugging}, we have
$$
\begin{array}{rl}
\ds {\bf J_i^N}((\bar \eta)_{j\neq i}, \bar \eta)\; \leq   & \ds   \int_{\Gamma} \left[\int_0^T \left(L\left(\gamma(s), \dot\gamma(s)\right) + f^{\sigma}(\gamma(s),m(s))\right)ds 
\right.\\
& \ds \qquad \qquad \qquad \qquad 
+ \phi_T(\gamma(T)) \Big] d\bar \eta(\gamma) + C\delta^{-(d+1)}N^{-2/(d+4)}+C_\sigma (\delta). 
\end{array}
$$
By assumption (C1) on $\bar \eta$ we have
$$
\int_{\Gamma}\phi_T(\gamma(T))  d\bar \eta(\gamma) = \int_{\T^d} \phi_T(x)m(T,x)dx
$$
and
$$
\begin{array}{rl}
\ds \int_{\Gamma} \int_0^T f^{\sigma}(\gamma(s),m(s))ds d\bar \eta(\gamma)= & \ds  \int_0^T \int_{\T^d} f^{\sigma}(y,m(s))m(s,y)dyds.
\end{array}$$
If we choose $\sigma$ small enough, we have
$$
\begin{array}{rl}
\ds \int_{\Gamma} \int_0^T f^{\sigma}(\gamma(s),m(s))ds d\bar \eta(\gamma)
  \leq & \ds 
 \int_0^T \int_{\T^d} f(y,m(s,y))m(s,y)dyds  +\frac{\ep}{10}.
\end{array}$$
So, using condition (C2) on $\bar \eta$ 
we obtain
\be\label{estipourbareta}
\begin{array}{rl}
\ds {\bf J_i^N}((\bar \eta)_{j\neq i}, \bar \eta)\; \leq   & \ds   \int_{\Gamma}\int_0^T L\left(\gamma(s), \dot\gamma(s)\right) ds d\bar \eta(\gamma)+
\int_0^T \int_{\T^d} f(y,m(s,y))m(s,y)dyds \\
& \ds \qquad \qquad \qquad 
 +\int_{\T^d} \phi_T(y) m(T,y)dy  + C\delta^{-(d+1)}N^{-2/(d+4)}+C_\sigma (\delta) +\frac{\ep}{10}\\
 \leq & \ds  \int_{\T^d} \phi(0,x)m_0(x)dx+ C\delta^{-(d+1)}N^{-2/(d+4)}+C_\sigma (\delta) +\frac{\ep}{10}
\end{array}
\ee
For $\sigma$ small as above, let us choose $\delta$ so small enough that $C_\sigma (\delta)\leq \frac{\ep}{10}$ and $N_0$ so large that 
$C\delta^{-(d+1)}N_0^{-2/(d+4)}\leq \frac{\ep}{10}$. Then, for $N\geq N_0$, we have by \eqref{estipourbetai} and \eqref{estipourbareta}
$$
\begin{array}{rl}
\ds {\bf J_i^N}((\bar \eta)_{j\neq i}, \beta^i)\; \geq  & \ds \int_{\T^d} \phi(0,x)m_0(x) -\frac\ep2
\end{array}
$$
and 
$$
{\bf J_i^N}((\bar \eta)_{j\neq i}, \bar \eta) \leq  \int_{\T^d} \phi(0,x)m_0(x) +\frac\ep2, 
$$
which completes the proof of the theorem. 
\end{proof}

\section{Comparison principle  and link with a time-space elliptic equation}\label{sec:visco}

In \cite{LLperso}, Lions shows that classical solutions of the MFG system \eqref{MFG} satisfy a comparison principle. Moreover, he explains that the MFG system can be reduced to an elliptic equation in $[0,T]\times \T^d$. We show here that these two properties extend---not very surprizingly---to our weak solutions of \eqref{MFG}. 

\subsection{Comparison principle}

\begin{Proposition} Let us fix $m_0$ but assume that we are given two terminal conditions $\phi_T^1$ and $\phi_T^2$, with $\phi_T^1\leq \phi_T^2$. Let $(\phi^1,m^1)$ and $(\phi^2,m^2)$ be the associated solutions of \eqref{MFG}, with the additional condition \eqref{CondSup}. Then $\phi^1\leq \phi^2$ on $[0,T]\times \T^d$.
\end{Proposition}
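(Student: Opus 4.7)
My plan is to extend the Lasry--Lions monotonicity argument of \cite{LLperso} to the present weak setting, combining the variational structure from Theorem \ref{theo:main} with the supersolution property \eqref{CondSup} and the maximal subsolution characterization (Lemma \ref{lem:maxsubsol}).

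The first step is to obtain a monotonicity identity at the integrated level. Set $w^i:=-m^iD_pH(\cdot,D\phi^i)$, so that by Theorem \ref{theo:main} the pair $(m^i,w^i)$ minimizes \eqref{Pb:mw2} with data $\phi_T^i$ and, as a consequence of the zero duality gap, equality holds in Lemma \ref{Lem: ineqq} for the diagonal pair $(m^i,w^i,\phi^i,f(\cdot,m^i))$. Applying Lemma \ref{Lem: ineqq} to the \emph{cross} pair $(m^1,w^1)$ tested against $(\phi^2,f(\cdot,m^2))$ and subtracting the diagonal equality for solution~$1$, then doing the symmetric computation and adding the two resulting inequalities, one arrives at
\[
0\;\le\;\int_0^T\!\!\int_{\T^d}(m^1-m^2)\bigl(f(x,m^1)-f(x,m^2)\bigr)\,dxdt\;\le\;\int_{\T^d}(\phi_T^2-\phi_T^1)(m^1(T)-m^2(T))\,dx.
\]
The left inequality is the monotonicity of $f(x,\cdot)$; the right one is the Lasry--Lions monotonicity formula.

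The second step is to promote this integrated identity to the pointwise bound $\phi^1\le\phi^2$. I would argue by contradiction: set $\mathcal O:=\{\phi^1>\phi^2\}$ and note that, by continuity of $\phi^i$ on $[0,T]\times\T^d$ and the assumption $\phi_T^1\le\phi_T^2$, the open set $\mathcal O$ does not meet $\{t=T\}$, so $\phi^1=\phi^2$ on $\partial\mathcal O\cap([0,T)\times\T^d)$. On $\mathcal O\cap\{m^1=0\}$ the right-hand side $f(\cdot,m^1)$ of the HJ inequality for $\phi^1$ vanishes, while $\phi^2$ is a viscosity supersolution of $-\partial_t\phi+H(\cdot,D\phi)\ge 0$ by \eqref{CondSup}; the classical first-order Hamilton--Jacobi comparison applied on this open subset (exactly as in the last paragraph of the proof of Theorem \ref{theo:mainex}) yields $\phi^1\le\phi^2$ there, contradicting the definition of $\mathcal O$. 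On the remaining piece $\mathcal O\cap\{m^1>0\}$, the monotonicity identity above --- combined with the trajectory-measure representation of Lemma \ref{lem:equalpsi} localized to $\mathcal O$ --- should force $m^1=m^2$, so that $\phi^1$ and $\phi^2$ satisfy Hamilton--Jacobi with a common right-hand side $f(\cdot,m^1)$ and the same HJ comparison closes the argument.

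The principal obstacle will be this last point: rigorously converting the global bound into the pointwise rigidity $m^1=m^2$ on $\mathcal O\cap\{m^1>0\}$. I expect to propagate the identity $\phi^1=\phi^2$ holding on $\partial\mathcal O$ backward along the characteristic flow of solution~$1$, exploiting that the optimal trajectories charged by $\bar\eta^1$ (Lemma \ref{lem:equalpsi}) cannot exit $\mathcal O$ through $\{t=T\}$ and therefore cross $\partial\mathcal O$, then using the strict monotonicity of $f$ together with the Lasry--Lions identity restricted to the subfamily of trajectories visiting $\mathcal O$ to force $m^1(s,\gamma(s))=m^2(s,\gamma(s))$ a.e.
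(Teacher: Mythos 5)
Your argument has a genuine gap, and it sits exactly where you flag it: the rigidity step. The Lasry--Lions computation you set up in the first step cannot deliver $m^1=m^2$ (not even on $\mathcal O\cap\{m^1>0\}$), because with distinct terminal data the boundary term it produces, $\int_{\T^d}(\phi_T^2-\phi_T^1)(m^1(T)-m^2(T))\,dx$, has no definite sign: $m^1(T)-m^2(T)$ can be positive or negative on the set where $\phi_T^2>\phi_T^1$. Indeed one should \emph{not} expect $m^1=m^2$: different terminal costs give different optimal control problems, hence different optimal flows, and the proposition only claims an ordering of the value functions. So the chain of inequalities in your first display gives no contradiction and no pointwise information. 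The second structural problem is the decomposition of $\mathcal O$ into $\mathcal O\cap\{m^1=0\}$ and $\mathcal O\cap\{m^1>0\}$: these are merely measurable sets, not open ones, so the viscosity comparison you invoke (modelled on the end of the proof of Theorem \ref{theo:mainex}) cannot be run on either piece. In that proof the key point is that $m$ vanishes a.e.\ on \emph{all} of the open set $\{\phi<\bar\phi\}$, which is available there only because both functions solve the \emph{same} MFG system and Theorem \ref{theo:unique} applies; here the analogous statement fails. Even on all of $\mathcal O$, comparing the distributional subsolution $\phi^1$ of $-\partial_t\phi+H\leq f(\cdot,m^1)$ with the viscosity supersolution $\phi^2$ of $-\partial_t\phi+H\geq 0$ would require $f(\cdot,m^1)\leq 0$ or some control of $f(\cdot,m^2)$ from below by $f(\cdot,m^1)$, neither of which is known.

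The paper's proof avoids PDE comparison entirely and uses the lattice structure of the variational problem. Setting $\mathcal O=\{\phi^1>\phi^2\}$ and letting $I_i$ be the contribution of $(\phi^i,\alpha^i)$ to the functional $\mathcal A$ restricted to $\mathcal O$ (including the piece of the initial term on $\mathcal O\cap\{t=0\}$), one compares $I_1$ with $I_2$. If $I_1\leq I_2$, the competitor $\psi=\max\{\phi^1,\phi^2\}$ with $\alpha=\alpha^1\mathbf 1_{\mathcal O}+\alpha^2\mathbf 1_{\mathcal O^c}$ is admissible for the problem with data $\phi_T^2$ and does not increase the cost, so it is another minimizer with $\psi\geq\phi^2$ and $\psi>\phi^2$ somewhere, contradicting the maximality of $\phi^2$ guaranteed by \eqref{CondSup} and Lemma \ref{lem:maxsubsol}. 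If $I_1>I_2$, the symmetric swap $\min\{\phi^1,\phi^2\}$ strictly decreases the cost of $(\phi^1,\alpha^1)$, contradicting its optimality. If you want to salvage your approach, this truncation argument is the missing idea: it converts the ordering of terminal data into an ordering of the maximal subsolutions without ever needing $m^1=m^2$ or a pointwise HJ comparison.
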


A possible application of the above proposition is the following: assume that $H$ and $f$ are independent of $x$ and that $m_0=1$ a.e.. Let $(\phi,m)$ be the weak solution to \eqref{MFG} which satisfies \eqref{CondSup}. Then one easily checks that $\phi$ is Lipschitz continuous in space. 

\begin{proof} Let us set $\alpha^i= f(\cdot,m^i)$ for $i=1,2$. We use the fact that, $(\phi^1,\alpha^1)$ and $(\phi^2,\alpha^2)$ are minimizers of \eqref{PB:dual-relaxed} with terminal conditions  $\phi_T^1$ and $\phi_T^2$ respectively. Recall that the additional condition  \eqref{CondSup} ensures that $\phi^1$ (resp. $\phi^2$) is the maximal subsolution of \eqref{subHJ} with terminal condition $\phi_T^1$ (resp. $\phi^T_2$). 

We argue by contradiction, assuming that $\max(\phi^1-\phi^2)>0$. Let ${\mathcal O}=\{\phi^1>\phi^2\}\subset [0,T)\times \T^d$ and 
$$
I_1=\int\int_{{\mathcal O}} F^*(x,\alpha^1)dxdt-   \int_{{\mathcal O}\cap \{t=0\}} \phi^1(0)m_0 
$$
and
$$
 I_2=\int\int_{{\mathcal O}} F^*(x,\alpha^2)dxdt -   \int_{{\mathcal O}\cap \{t=0\}} \phi^2(0)m_0 \;.
$$
If $I_1\leq I_2$, then we set $\phi=\max\{\phi^1,\phi^2\}$ and $\alpha=\alpha^1{\bf 1}_{{\mathcal O}}+ \alpha^2{\bf 1}_{{\mathcal O}^c}$. Then we have 
$(\psi, \alpha)\in {\mathcal K}$ with $\psi(T,\cdot)=\phi^2_T$ and 
$$
\int_0^T\int_{\T^d}  F^*(x,\alpha) - \int_{\T^d} \psi(0)m_0
\leq
\int_0^T\int_{\T^d}  F^*(x,\alpha^2) - \int_{\T^d} \phi^2(0) m_0\;.
$$
In particular, $(\psi,\alpha)$ is another solution of \eqref{PB:dual-relaxed}, with $\psi\geq \phi^2$. This contradicts the maximality of $\phi^2$.

If, on the contrary, $I_1>I_2$, then we can argue in a symmetric way by comparing $(\psi,\alpha)$ (where $\psi= \min\{\phi^1,\phi^2\}$ and $\alpha=\alpha^1{\bf 1}_{{\mathcal O}^c}+ \alpha^2{\bf 1}_{{\mathcal O}}$) with $(\phi^1,\alpha^1)$. We get now a contradiction because 
$$
\int_0^T\int_{\T^d}  F^*(x,\alpha) - \int_{\T^d} \psi(0)m_0
<
\int_0^T\int_{\T^d}  F^*(x,\alpha^1) - \int_{\T^d} \phi^1(0) m_0\;.
$$
\end{proof}

\subsection{The MFG system as a time-space elliptic equation}

We now show that, if $(\phi,m)$ is a weak solution to the MFG system, then $\phi$ is also a viscosity solution of a degenerate elliptic equation in time-space. 

We work under the additional assumptions
\be\label{HypSup}
\mbox{\rm the maps $H$ and $F^*$ are of class ${\mathcal C}^2$ in $\T^d\times \R^d$ and $\T^d\times (0,+\infty)$ respectively}
\ee
and 
\be\label{HypSup2}
\mbox{\rm $F^*_{\alpha\alpha}(x,\alpha)>0$ for $\alpha>0$.}
\ee

\begin{Proposition} Assume that  $(\phi,m)$ is the weak solution of \eqref{MFG} for which \eqref{CondSup} holds. Then $\phi$ is a viscosity solution of the second order elliptic equation
\be\label{HJ}
\left\{\begin{array}{l}
\ds \min\left\{ \ {\mathcal G}\left(x, \partial_t\phi, D\phi, \partial_{tt}\phi, D\partial_t\phi, D^2\phi\right)\ ;\
-\partial_t\phi+H(x,D\phi))\ \right\}=0\qquad {\rm in}\; (0,T)\times \T^d\\
\ds \phi(T,\cdot) =\phi_T \qquad {\rm in}\; \T^d\\
\ds -\partial_t\phi+H(x,D\phi)=f(m_0) \qquad {\rm in}\; \T^d
\end{array}\right.
\ee
where, for any $(x,p_t,p_x, a,b,C)\in \T^d\times \R\times \R^d\times \R\times \R^d\times \R^{d\times d}$ with $-p_t+H(x,p_x)> 0$
$$
\begin{array}{l}
\ds {\mathcal G}(x,p_t,p_x, a,b,C)\\
\;  =  \ds  F^{*}_{\alpha,\alpha}\left[ -a +2\lg H_p, b\rg -\lg C H_p, H_p\rg -\lg H_p, H_x\rg\right]
 - \lg F^*_{x,\alpha}, H_p\rg - F^*_\alpha\left[ Tr(H_{x,p})+Tr(H_{pp}C)\right]\\
 \\
\;  =  \ds -Tr\left( {\mathcal A}(x,p_t,p_x)\left(\begin{array}{cc} a & b^T\\b & C\end{array}\right)\right) 
- F^{*}_{\alpha,\alpha}\lg H_p, H_x\rg
- \lg F^*_{x,\alpha}, H_p\rg - F^*_\alpha Tr(H_{x,p})
\end{array}
$$
and where 
$$
{\mathcal A}(x,p_t,p_x)= F^*_{\alpha,\alpha}\left(\begin{array}{cc} 1 & -H_p^T \\ -H_p &  H_p\otimes H_p  \end{array}\right)
+  F^*_{\alpha}\left(\begin{array}{cc} 0 & 0 \\ 0 &  H_{pp}  \end{array}\right).
$$
In the above  equations we have systematically set 
$$
H= H(x,p_x), \; H_p= \frac{\partial H}{\partial p}(x,p_x), \; H_x= \frac{\partial H}{\partial x}(x,p_x)
$$
$$
F^{*}=  F^*(x,-p_t+H(x,p_x)), \; F^{*}_\alpha= \frac{\partial F^*}{\partial \alpha}(x,-p_t+H(x,p_x)), \; \dots
$$
If $-p_t+H(x,p_x)\leq 0$, we simply set $\ds {\mathcal G}(x,p_t,p_x, a,b,C)=0$. 
\end{Proposition}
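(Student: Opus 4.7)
The plan is to recognize the elliptic equation $\mathcal{G}=0$ as the one obtained by eliminating $m$ from the MFG system. Thanks to \eqref{HypSup2} the map $f(x,\cdot)$ is strictly increasing on $[0,\infty)$, so on the set $\{m>0\}$ the HJ equation inverts to $m=F^*_\alpha(x,-\partial_t\phi+H(x,D\phi))$; substituting this into the continuity equation $\partial_t m-\mathrm{div}(mD_pH(x,D\phi))=0$ and expanding by the chain rule yields exactly $\mathcal{G}(x,\partial_t\phi,D\phi,\partial_{tt}\phi,D\partial_t\phi,D^2\phi)=0$. A direct verification shows that the symbol matrix $\mathcal{A}$ in the statement decomposes as $F^*_{\alpha\alpha}\,vv^{T}$ with $v=(1,-H_p^{T})^{T}$ plus $F^*_\alpha$ times the block $\mathrm{diag}(0,H_{pp})$, both non-negative by convexity of $F$ and of $H$; hence the operator is degenerate elliptic in the time--space variables. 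The $\min$-formulation reflects the obstacle $-\partial_t\phi+H\geq 0$ (already in \eqref{CondSup}) together with the convention $\mathcal{G}\equiv 0$ on $\{-\partial_t\phi+H\leq 0\}$: on this latter set one has $m=0$ and the inversion is invalid, so only the HJ inequality is retained.

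The viscosity supersolution property has two parts. The inequality $-\partial_t\phi+H\geq 0$ is Theorem~\ref{theo:mainex}. For the second part, let $\psi\in C^{2}$ touch $\phi$ from below (strictly) at $(t_0,x_0)$ with $-\partial_t\psi+H(x_0,D\psi)>0$; I need $\mathcal{G}(\psi)(t_0,x_0)\geq 0$. Arguing by contradiction, suppose $\mathcal{G}(\psi)(t_0,x_0)<0$. By continuity of $\psi$ and its derivatives, the same strict inequalities hold on a small cylinder $U\ni(t_0,x_0)$. The smooth profile $\tilde m:=F^*_\alpha(x,-\partial_t\psi+H(x,D\psi))$ is then strictly positive on $U$ and, by the formal computation, is a strict subsolution of the continuity equation driven by $-D_pH(x,D\psi)$. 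I would construct a local competitor for \eqref{Pb:mw2} by interpolating between $(m,w)$ and $(\tilde m,-\tilde m D_pH(x,D\psi))$ inside $U$ with a small parameter $\lambda>0$, together with a mass-balancing divergence-free flux supported near $\partial U$; the strict sub-inequality, combined with the Fenchel--Young equality and the strict convexity of $F$ and $H^*$, yields $\mathcal{B}(\hat m,\hat w)<\mathcal{B}(m,w)$ at leading order in $\lambda$, contradicting the optimality of $(m,w)$ given by Theorem~\ref{theo:main}.

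The subsolution property is handled by the dual argument on the primal problem \eqref{PB:dual-relaxed}. If $\psi\in C^{2}$ touches $\phi$ from above at $(t_0,x_0)$ with $-\partial_t\psi+H>0$ and $\mathcal{G}(\psi)(t_0,x_0)>0$, then $\phi\leq\psi$ locally lets one patch $\psi$ into a mollified version of $\phi$ on $U$ (as in the proof of Proposition~\ref{prop:valeursegales}) and produce an admissible pair $(\tilde\phi,\tilde\alpha)\in\mathcal{K}$ with strictly smaller value of $\mathcal{A}$, contradicting the optimality of $(\phi,f(\cdot,m))$ from Theorem~\ref{theo:main}. The boundary condition $-\partial_t\phi+H=f(x,m_0)$ at $t=0$ is the transversality condition coming from the first variation of the term $-\int\phi(0)m_0$ in $\mathcal{A}$; it is verified in viscosity sense by the same perturbation argument applied to test functions reaching the initial slice, using continuity of $\phi$ up to $t=0$ (Proposition~\ref{Prop:existence}).

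The principal obstacle will be implementing these local perturbations given the low regularity of the weak solution ($\phi\in BV$, $D\phi\in L^{r}$, $m\in L^{q}$). On the dual side one has to engineer $(\hat m,\hat w)\in\mathcal{K}_1$ that preserves both the continuity equation and the normalization $\int_{\T^d}\hat m(t,\cdot)=1$ across time; a divergence-free corrector flux of $L^{r'q/(r'+q-1)}$-norm $o(\lambda)$ localized on an annular region around $U$ handles this, with an error of higher order than the $O(\lambda)$ gain from the strict sign of $\mathcal{G}(\psi)$. On the primal side one uses the mollification technique from Proposition~\ref{prop:valeursegales}, whose error term $O(\epsilon^{1-(d+1)\theta/r})$ is absorbed thanks to assumption (H3). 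Passing these constructions to the limit ($\lambda\to 0$ and $\epsilon\to 0$) then yields the required contradictions.
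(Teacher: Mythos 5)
Your identification of the equation as the continuity equation after the substitution $m=F^*_\alpha(x,-\partial_t\phi+H(x,D\phi))$, and of the degenerate ellipticity of $\mathcal A$, matches the paper. But your supersolution argument contains a genuine gap. You propose to perturb the \emph{dual} problem \eqref{Pb:mw2} by interpolating $m$ with the smooth profile $\tilde m=F^*_\alpha(x,-\partial_t\psi+H(x,D\psi))$ on a cylinder $U$ and repairing the constraint with ``a mass-balancing divergence-free flux supported near $\partial U$''. That corrector cannot work: membership in $\mathcal K_1$ requires $\partial_t\hat m+\dive\hat w=0$ \emph{exactly}, and replacing $m$ by an interpolation with $\tilde m$ inside $U$ creates a source term distributed over the interior of $U$, which no flux concentrated near $\partial U$ (let alone a divergence-free one) can absorb; one would need to solve $\dive \hat w=-\partial_t\hat m$ throughout $U$ and then control the extra kinetic cost $\hat m H^*(x,-\hat w/\hat m)$, which degenerates wherever $m$ vanishes. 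Moreover, where $m=0$ the perturbation $F(x,\lambda\tilde m)-F(x,0)$ is of order $\lambda$, so the claimed $o(\lambda)$ bookkeeping is not justified. The paper avoids all of this: \emph{both} viscosity inequalities are obtained on the primal side. For a test function $\xi$ touching $\phi$ from above one takes the competitor $\phi_\ep=\phi\wedge(\xi-\ep)$, $\alpha_\ep=(-\partial_t\xi+H(x,D\xi)){\bf 1}_{\mathcal O_\ep}+\alpha{\bf 1}_{\mathcal O_\ep^c}$ with $\mathcal O_\ep=\{\xi-\ep<\phi\}$, and extracts from $\mathcal A(\phi_\ep,\alpha_\ep)\ge\mathcal A(\phi,\alpha)$, via convexity of $F^*$, the distributional inequality $-\partial_t\phi+H(x,D\phi)\le\alpha$, convexity of $H$, and an integration by parts over $\mathcal O_\ep$ (where $\phi-(\xi-\ep)>0$), the pointwise inequality $-\partial_t(F^*_\alpha)+\dive_x(F^*_\alpha H_p(x,D\xi))\ge0$ at $(t_0,x_0)$, i.e.\ $\mathcal G\le0$; the supersolution inequality follows from the symmetric truncation $\phi\vee(\xi+\ep)$, which is admissible precisely because \eqref{CondSup} makes $\phi$ the maximal subsolution. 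Your subsolution paragraph is in the right spirit (a primal competitor contradicting optimality) but omits this truncation mechanism, which is the whole proof.

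A second, smaller gap concerns the condition $-\partial_t\phi+H(x,D\phi)=f(x,m_0)$ at $t=0$. The first variation of $-\int_{\T^d}\phi(0)m_0$ only yields the \emph{relaxed} boundary condition ``either $\mathcal G\le0$ or $-\partial_t\xi+H(x,D\xi)\le f(x,m_0)$ at $(0,x_0)$'' (and the symmetric statement for supersolutions), via the extra boundary term $\int_{\mathcal O_\ep\cap\{t=0\}}(\xi-\ep-\phi)(m_0-F^*_\alpha)$. Upgrading this to the actual equality requires a further perturbation $\zeta=\xi\pm\sigma(t)$ with $\sigma(0)=0$, $\sigma'(0)=\delta$ small and $\sigma''(0)=\mp R$ with $R$ arbitrarily large, and it is exactly here that the nondegeneracy assumption \eqref{HypSup2} ($F^*_{\alpha\alpha}>0$) is used to reach a contradiction. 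Your sketch does not address this step, and without it the boundary condition is only established in the weak (alternative) viscosity sense.
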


\begin{Remarks} {\rm $\;$
\begin{enumerate}
\item The boundary condition at time $t=0$ has to be understood in the viscosity sense. 

\item Equation \eqref{HJ} is a quasilinear (degenerate) elliptic equation in space-time. 
\end{enumerate}
}\end{Remarks}

\begin{proof} The proof follows  standard tricks. Let us first check that $\phi$ is a subsolution: let $\xi$ be a smooth test function such that $\xi\geq \phi$ with an equality only at $(t_0,x_0)\in (0,T)\times \T^d$. We have to prove that 
 \be\label{kjhcvbss}
\min\left\{ \ {\mathcal G}\left(x_0, \partial_t\xi, D\xi, \partial_{tt}\xi, D\partial_t\xi, D^2\xi\right)\ ;\
-\partial_t\xi+H(x,D\xi))\ \right\}\leq 0\qquad {\rm at }\; (t_0,x_0).
\ee
If $-\partial_t\xi(t_0,x_0)+H(x,D\xi(t_0,x_0)))\leq 0$, then the result holds. Let us assume that $-\partial_t\xi(t_0,x_0)+H(x,D\xi(t_0,x_0)))> 0$.
For any $\ep>0$, we set ${\mathcal O}_\ep=\{\xi-\ep<\phi\}$, 
$$
\phi_\ep= \phi\wedge (\xi-\ep), \; \alpha_\ep = (-\partial_t \xi+H(x,D\xi)){\rm 1}_{{\mathcal O}_\ep}+ \alpha {\rm 1}_{{\mathcal O}_\ep^c}.
$$
We can choose $\ep>0$ so small that  $-\partial_t\xi+H(x,D\xi))> 0$ in ${\mathcal O}_\ep$. Then  the pair $(\phi_\ep,\alpha_\ep)$ is still admissible and by optimality of $(\phi,\alpha)$ we have, for $\ep$ small enough so that ${\mathcal O}_\ep\subset (0,T)\times \T^d$,  
$$
\iint_{{\mathcal O}_\ep}F^*(x, -\partial_t \xi+H(x,D\xi))  \geq\iint_{{\mathcal O}_\ep}F^*(x,\alpha) 
$$
where, as $F^*$ is convex, 
$$
\iint_{{\mathcal O}_\ep}F^*(x,\alpha) 
\geq \iint_{{\mathcal O}_\ep}F^*(x, -\partial_t \xi+H(x,D\xi))+ F^*_\alpha(x, -\partial_t \xi+H(x,D\xi))[\alpha-(-\partial_t \xi+H(x,D\xi))]
$$
As $F^*$ is nondecreasing in the second variable  and $\alpha \geq -\partial_t\phi+H(x,D\phi)$ in the sense of measure
we have (writing $F^*_\alpha$ for $F^*_\alpha(x, -\partial_t \xi+H(x,D\xi))$)
$$
0\geq  \iint_{{\mathcal O}_\ep} F^*_\alpha\left[ -\partial_t(\phi-\xi)+H(x,D\phi)-H(x,D\xi)\right].
$$
We use again the fact that $ F^*_\alpha\geq 0$ and the convexity of $H$ to get
$$
0\geq  \iint_{{\mathcal O}_\ep} F^*_\alpha\left[ -\partial_t(\phi-\xi)+\lg H_p(x,D\xi), D(\phi-\xi)\rg\right].
$$
Since $\phi=\xi-\ep$ in $\partial {\mathcal O}_\ep$, we integrate by parts to obtain 
$$
0\geq  - \iint_{{\mathcal O}_\ep} (\phi-(\xi-\ep)) \left[- \frac{\partial}{\partial t} (F^*_\alpha)
+ \dive_x( F^*_\alpha H_p(x,D\xi))\right]
$$
Since $\phi-(\xi-\ep)> 0$ in ${\mathcal O}_\ep$, we must have 
$$
-\frac{\partial}{\partial t} (F^*_\alpha)
+ \dive_x( F^*_\alpha H_p(x,D\xi)) \geq 0 \; {\rm at}\; (t_0,x_0)\;, 
 $$
  which, after developing the terms, yield the desired inequality
$$
  {\mathcal G}\left(x_0, D_{t,x}\xi(t_0,x_0),D^2_{t,x}\xi(t_0,x_0)\right)\leq 0.
$$
The proof that $\phi$ is a supersolution can be treated along the same line (because we assume that  \eqref{CondSup} holds) and we omit it. \\

We already know that $\phi=\phi_T$ at $t=T$. It remains to check the boundary condition at $t=0$. We first prove that it holds in the viscosity generalized sense: for the subsolution part, let again $\xi$ be a smooth test function such that $\xi\geq \phi$ with an equality only at $(0,x_0)$. As before we can assume that inequality $-\partial_t\xi(t_0,x_0)+H(x,D\xi(t_0,x_0)))> 0$ holds. 
For any $\ep>0$, defining as above ${\mathcal O}_\ep=\{\xi-\ep<\phi\}$, 
$$
\phi_\ep= \phi\wedge (\xi-\ep), \; \alpha_\ep = (-\partial_t \xi+H(x,D\xi)){\rm 1}_{{\mathcal O}_\ep}+ \alpha {\rm 1}_{{\mathcal O}_\ep^c}\;, 
$$
we get:   
$$
\iint_{{\mathcal O}_\ep}F^*(x, -\partial_t \xi+H(x,D\xi)) -\int_{{\mathcal O}_\ep\cap \{t=0\}} (\xi-\ep)m_0
 \geq\iint_{{\mathcal O}_\ep}F^*(x,\alpha) -\int_{{\mathcal O}_\ep\cap \{t=0\}} \phi m_0
$$
We handle the terms $F^*$ as before to get 
$$
0\geq  \iint_{{\mathcal O}_\ep} F^*_\alpha\left[ -\partial_t(\phi-\xi)+\lg H_p(x,D\xi), D(\phi-\xi)\rg\right]
+ \int_{{\mathcal O}_\ep\cap \{t=0\}} (\xi-\ep-\phi)m_0
$$
We integrate by part and, since $\phi=\xi-\ep$  on $\partial {\mathcal O}_\ep\cap ((0,T)\times \T^d)$, we  obtain an extra boundary term
$$
0\geq  - \iint_{{\mathcal O}_\ep} (\phi-(\xi-\ep)) \left[- \frac{\partial}{\partial t} (F^*_\alpha)
+ \dive( F^*_\alpha H_p(x,D\xi))\right] + \int_{{\mathcal O}_\ep\cap \{t=0\}} (\xi-\ep-\phi)(m_0-F^*_\alpha)
$$
So either \eqref{kjhcvbss} holds, or we have $m_0-F^*_\alpha\geq 0$ at $(0,x_0)$, i.e., 
$-\partial_t\xi+H(x,D\xi)\leq f(x,m_0) $ at $(0,x_0)$ (because $f(x_0,m_0(x_0))\geq 0$). To complete the proof we just need to check that actually this last inequality holds. Assume on the contrary that there is $\kappa>0$ with  
\be \label{hypcontra}
-\partial_t\xi(0,x_0)+H(x_0,D\xi(0,x_0))> f(x_0, m_0(x_0))+\kappa.
\ee  We perturb 
$\xi$ into $\zeta(t,x)= \xi(t,x)+ \sigma(t)$ where $\sigma(0)=0$, $\sigma'(0)=\delta>0$ small and 
$\sigma''(0)= -R$ for $R$ large. Then, $\zeta\geq \xi \geq \phi$  for $t$ close to $0$. Since $-\partial_t\zeta+H(x_0,D\zeta)> f(x_0,m_0) $ at $(0,x_0)$
(from the choice of $\delta$ small), we have $m_0-F^*_\alpha(x_0,-\partial_t\zeta+H(x,D\zeta)) < 0$ and 
$\zeta$ must satisfy  \eqref{kjhcvbss}, so that
$$
\begin{array}{l}
\ds F^{*}_{\alpha,\alpha}\left[ -\partial_{tt}\xi-\sigma''(0)  +2\lg H_p, \partial_tD\xi\rg -\lg D^2\xi H_p, H_p\rg  \right] \\
\qquad \qquad  -F^{*}_{\alpha,\alpha}\lg H_p,H_x\rg - \lg F^*_{x,\alpha}, H_p\rg - F^*_\alpha\left[ Tr(H_{x,p})+Tr(H_{pp}D^2\xi)\right]\leq 0
\;\mbox{\rm at $(0,x_0)$,}
\end{array}
$$
 where,  to abbreviate the notation, we have set $F^{*}_{\alpha,\alpha}= F^{*}_{\alpha,\alpha}(x_0, -\partial_t \xi(0,x_0)-\sigma'(0)+H(x_0, D\xi(x_0)))$, etc... Since, by \eqref{hypcontra}, 
$-\partial_t \xi(0,x_0)-\sigma'(0)+H(x_0, D\xi(0,x_0))$ is larger than $\kappa/2$ for $\delta$ small (and independent of $R$), we have $F^{*}_{\alpha,\alpha}\geq \eta$ for some $\eta>0$ (independent of $R$) thanks to our assumption \eqref{HypSup2}. Now $-\sigma''(0) =R$ being arbitrarily large, we obtain a contradiction.

The proof that $\phi$ is a viscosity supersolution at $t=0$ can be handled in the same way, except for the boundary condition which requires additional explanation: let $\xi$ be a smooth test function such that $\xi\leq \phi$ with an equality only at $(0,x_0)$. Then one can show as before that either
$$
\min\left\{\  {\mathcal G}\left(x_0, D_{t,x}\xi(t_0,x_0),D^2_{t,x}\xi(t_0,x_0)\right)\ , \ -\partial_t\xi(t_0,x_0)+H(x_0,D\xi(t_0,x_0)) \ \right\}\geq 0,
$$
or $-\partial_t\xi(0,x_0)+H(x_0,D\xi(0,x_0))\geq f(x_0,m_0(x_0)) $.

We now argue by contradiction assuming that the second relation does not hold. 
Then, using the test function $\zeta(t,x)=\xi(t,x)-\sigma(t)$, where $\sigma$ is built as before, we must have 
\be\label{hjbqvlzn}
\min\left\{\  {\mathcal G}\left(x_0, D_{t,x}\zeta(t_0,x_0),D^2_{t,x}\zeta(t_0,x_0)\right)\ , \ -\partial_t\zeta(t_0,x_0)+H(x_0,D\zeta(t_0,x_0)) \ \right\}\geq 0.
\ee
Note that 
\be\label{hbqblf:f,b}
-\partial_t\zeta(t_0,x_0)+H(x_0,D\zeta(t_0,x_0)) = 
-\partial_t\phi(t_0,x_0)+H(x_0,D\phi(t_0,x_0)) +\delta \geq \delta>0 
\ee
because $\phi$ satisfies \eqref{CondSup}. By \eqref{hjbqvlzn} 
$$
\begin{array}{l}
\ds F^{*}_{\alpha,\alpha}\left[ -\partial_{tt}\xi+\sigma''(0)  +2\lg H_p, \partial_tD\xi\rg -\lg D^2\xi H_p, H_p\rg  \right] \\
\qquad \qquad  -F^{*}_{\alpha,\alpha}\lg H_p,H_x\rg - \lg F^*_{x,\alpha}, H_p\rg - F^*_\alpha\left[ Tr(H_{x,p})+Tr(H_{pp}D^2\xi)\right]\geq 0
\;\mbox{\rm at $(0,x_0)$,}
\end{array}
$$
 where $F^{*}_{\alpha,\alpha}= F^{*}_{\alpha,\alpha}(x_0, -\partial_t \xi(0,x_0)+\sigma'(0)+H(x_0, D\xi(x_0)))$, etc... 
Since \eqref{hbqblf:f,b} holds,  $F^{*}_{\alpha,\alpha}$ is positive thanks to assumption \eqref{HypSup2}. We can then let $R=-\sigma''(0)\to+\infty$ to get a contradiction. 

\end{proof}


\begin{thebibliography}{abc99xyz}
\bibitem{AGS}  Ambrosio, L., Gigli, N., Savaré, G. Gradient flows in metric spaces and in the space of probability measures. Lectures in Mathematics ETH Zürich. Birkhäuser Verlag, Basel, 2008.

\bibitem{Au61} Aumann, R. J. {\it Mixed and behavior strategies in infinite extensive games.} PRINCETON UNIV NJ (1961).

\bibitem{BMK} Ben Moussa B., Kossioris G.T. 
{\it On the system of Hamilton-Jacobi and transport equations arising in geometric optics}, 
Comm. Partial Diff. Eq., 28 (2003), 1085-1111.

\bibitem{bb} Benamou J.D.,  Brenier Y., {\it A computational fluid mechanics solution to the Monge-Kantorovich mass transfer problem}, {Numer. Math.}, 84, 375--393, (2000). 

\bibitem{BD83} Blackwell D., Dubins L.E.  {\it An extension of Skorohod's almost sure representation theorem}, Proc. Amer. Math. Soc., 89, 691-692 (1983).

\bibitem{CaSi} Camilli F. and Siconolfi A., {\it Time-Dependent Measurable Hamilton-Jacobi Equations.} Communications in Partial Differential Equations
30, 5-6  (2005),  813-847. 

\bibitem{CC}
Cannarsa~P. and Cardaliaguet~P.
\newblock {\it H\"older estimates in space-time for viscosity solutions of
  {H}amilton-{J}acobi equations.}
\newblock {Comm. Pure Appl. Math.}, 63(5)  (2010), 590--629.

\bibitem{c1} Cardaliaguet P.
{\it Differential games with asymmetric information.} SIAM J. Control Optim. (2006) 46, no. 3, 816--838.

\bibitem{CaRa} Cardaliaguet, P., Rainer, C. {\it Hölder Regularity for Viscosity Solutions of Fully Nonlinear, Local or Nonlocal, Hamilton-Jacobi Equations with Superquadratic Growth in the Gradient.} SIAM Journal on Control and Optimization, 49(2) (2011), 555-573.

\bibitem{CCN} Cardaliaguet P., Carlier G.,  Nazaret B., {\it Geodesics for a class of distances in the space of probability measures.}  Calculus of Variations and Partial Differential Equations,  (2012) 1-26.

\bibitem{CaQu} Cardaliaguet P. \&  Quincampoix M., {\it Deterministic differential games under probability knowledge of initial condition. }
 Int. Game Theory Rev. 10 (2008), no. 1, 1--16.

\bibitem{CS} Cardaliaguet P., Silvestre L. {\it H\"older continuity to Hamilton-Jacobi equations with superquadratic growth in the gradient and unbounded right-hand side.} Communications in Partial Differential Equations,  vol. 37  (2012), no 9, p. 1668-1688.

\bibitem{CLLP} Cardaliaguet P., Lasry J.-M., Lions P.-L., Porretta A. {\it Long time average of mean field games}. 
 Networks and Heterogeneous Media 
7 (2012), no. 2,  279-301.

\bibitem{CLLP2} Cardaliaguet P., Lasry J.-M., Lions P.-L., Porretta A. {\it Long time average of mean field games with a nonlocal coupling}. Preprint 2012. 

\bibitem{CD2012} Carmona~R. and Delarue~F. (2012). Probabilistic Analysis of Mean-Field Games. Preprint hal-00714589.

\bibitem{clss} Carrillo~J.A. , Lisini~S.,  Savar\'e~G., Slepcev~D., {\it Nonlinear mobility continuity equations and generalized displacement convexity}, Journal of Functional Analysis, vol. 258,1273-1309 (2010).

\bibitem{ChH} Chen, X. and Hu, B. {\it Viscosity solutions of discontinuous Hamilton-Jacobi equations}. Interfaces and Free Boundaries, 10 (2008), 3, 339--359.

\bibitem{ET} Ekeland, I., and Témam, R. Convex analysis and variational problems, english ed., vol. 28 of Classics
in Applied Mathematics. Society for Industrial and Applied Mathematics (SIAM), Philadelphia,
PA, 1999. Translated from the French.




\bibitem{GMS} Gomes, D.A., Mohr, J., Souza, R. {\it  Discrete time, finite state space mean field games.} 
J. Math. Pures Appl. (9) 93 (2010), no. 3, 308-328.

\bibitem{JG}  Gosse L.  and  James F.
{\it Convergence results for an inhomogeneous system arising in various high frequency approximations.} Numerische Mathematik, 
90 (2002), 4, 721-753

\bibitem{HCMieeeAC06} Huang, M., Caines, P.E., Malham\'e, R.P. (2006). 
{\it Large population stochastic dynamic games: closed-loop McKean-Vlasov systems and the Nash certainty equivalence
principle.} Communication in information and systems. Vol. 6, No. 3, pp. 221-252. 


\bibitem{LL06cr1} Lasry, J.-M., Lions, P.-L. {\it Jeux \`a champ moyen. I. Le cas stationnaire.}
C. R. Math. Acad. Sci. Paris  343  (2006),  no. 9, 619-625.

\bibitem{LL06cr2} Lasry, J.-M., Lions, P.-L. {\it Jeux \`a champ moyen. II. Horizon fini et contrôle optimal.}
C. R. Math. Acad. Sci. Paris  343  (2006),  no. 10, 679-684.

\bibitem{LL07mf} Lasry, J.-M., Lions, P.-L. {\it Mean field games.}  Jpn. J. Math.  2  (2007),  no. 1, 229--260.

\bibitem{LLperso} Lions, P.L. In Cours au Coll\`{e}ge de France. 
www.college-de-france.fr.

\bibitem{LP87} Lions P. L., Perthame B., {\it Remarks on Hamilton-Jacobi equations with measurable time-dependent Hamiltonians.} Nonlinear Analysis, T.M.A, Volume 11, pp.613-621, 1987.


\bibitem{RR98} Rachev S.T.  and Ruschendorf L. Mass Transportation Problems I: Theory. Springer Verlag, 1998.
\end{thebibliography}
\end{document}